\newcommand{\calK}{\mathcal{K}}
\newcommand{\calH}{\mathcal{H}}
\newcommand{\frakp}{\mathfrak{p}}
\newcommand{\olM}{\overline{M}}
\newtheorem{thm}{Theorem}[section]    % Standard theorem environment
\newtheorem{lemma}[thm]{Lemma}          % Lemma environment with numbering 
\theoremstyle{definition}
\newtheorem{corollary}[thm]{Corollary}
\begin{document}

\title{Extended Hodge Theory for Fibred Cusp Manifolds}
\author{E. Hunsicker
\\ Loughborough University\\
E.Hunsicker@lboro.ac.uk}
\date{\today}							% Activate to display a given date or no date

\maketitle

\begin{abstract}    % type your abstract below
For a particular class of pseudo manifolds, we show that the intersection cohomology groups for any perversity may
be naturally represented by extended weighted $L^2$ harmonic forms for a complete metric on the regular stratum with respect to 
some weight determined by the perversity.  Extended weighted $L^2$ harmonic forms are harmonic forms that are almost
in the given weighted $L^2$ space for the metric in question, but not quite.  This result is akin to the representation of absolute
and relative cohomology groups for a manifold with boundary by extended harmonic forms on the associated manifold with cylindrical ends.  
In analogy with that setting, in the unweighted $L^2$ case, the boundary values of the extended harmonic forms define a Lagrangian
splitting of the boundary space in the long exact sequence relating upper and lower middle perversity intersection
cohomology groups.  

\end{abstract}

%%%%%%%%%%%%%%%%%%%%   Start of main body of article

\section{Introduction}
The Hodge theorem for compact smooth manifolds was a major breakthrough connecting geometry, topology and analysis 
on smooth compact manifolds.  It says that the space of harmonic forms over a compact Riemannian manifold, $(M,g)$,
(an analytic quantity) is naturally isomorphic to the deRham cohomology of $M$ (a topological quantity).  Further, 
it can be refined to say that when $g$ is a K\"ahler metric, the space of harmonic forms breaks down by holomorphic/antiholomorphic
bidegree to form a Hodge diamond of spaces.  This decomposition, together with maps among the 
pieces given by the Hodge star operator and complex conjugation, is called a Hodge structure.  The isomorphism with deRham cohomology also then endows these topologically defined spaces with a Hodge structure.

In addition to the Hodge theorem for compact smooth $M$, when $M$ is oriented, there is also a natural bilinear pairing on smooth forms over $M$
which descends to a nondegenerate pairing on the deRham cohomology, $H^*(M)$, called the signature pairing.  
The signature of this pairing is called simply
the signature of $M$.  This can also be realised as a pairing on the space of harmonic forms, where it is also nondegenerate,
and gives the same signature through the Hodge isomorphism.  

Over the past several years, many mathematicians have worked to generalise these results to settings in which
the compact manifold, $M$, is replaced either with a noncompact manifold or with a singular space.  This work has 
gone on from both the topological and analytic sides.  On the topological side, the deRham cohomology (and dual cellular and other homologies) needed to be replaced by more general cohomologies and homologies.  On the analytic side, tools needed to be developed for studying elliptic operators over noncompact or incomplete manifolds endowed with various classes of metrics.  
The first work in this direction is due to Connor in 1956, who studied manifolds with boundary and the relationship between 
absolute and relative
cohomology and harmonic forms with either Neumann or Dirichlet boundary conditions, \cite{Co}.  In 1975, \cite{aps1}, Atiyah, Patodi and Singer
studied signatures on manifolds with boundary, and also Hodge results for a noncompact manifold with cylindrical ends.  These
results again related to relative and absolute cohomology.

In the 1980's, the development of intersection homology and cohomology on pseudomanifolds critically involved the collaboration
of an analyst, Cheeger, and two topologists, Goresky and MacPherson.  Together, they defined this new (co)homology theory
on singular spaces that recaptures Poincar\'e duality and various other properties of homology and cohomology on smooth
compact manifolds, and proved its isomorphism to the $L^2$ cohomology of manifolds with conical and iterated conical metrics,
under a topological assumption about the links of singular strata in the pseudomanifold.  Following on from this, in 1990, Saper
and Stern proved Zucker's conjecture, that the $L^2$ cohomology of a Hermitian locally symmetric space under its
natural Bergman metric is isomorphic to the middle perversity intersection cohomology of its reduced Borel compactification.

Since then, many authors have worked on Hodge theorems relating harmonic forms or $L^2$ cohomology on noncompact
manifolds with various natural classes of metrics to intersection cohomology, and in the setting of incomplete manifolds,
understanding the relationship between intersection cohomology and boundary conditions for the Laplace operator on 
differential forms.  Just a few such papers are \cite{MaPh}, \cite{Ma}, \cite{HM}, \cite{ALMP1}, \cite{Car1},
\cite{Car2}, \cite{Car3}.  The most relevant to this paper are \cite{HHM}, in which the author of the present paper and her collaborators studied
$L^2$ harmonic forms on fibred cusp manifolds and their relationship to middle perversity intersection cohomology groups, 
and \cite{JM}, in which harmonic forms on fibred cusp manifolds satisfying the same geometric restriction as 
in this paper are found which represent relative and absolute cohomology groups.
At the same time, many authors have been involved with the development of new analytic tools
for the study of singular and noncompact spaces.  A very incomplete list of relevent works on pseudodifferential techniques are 
\cite{Me-aps}, \cite{MaMe}, \cite{Va}, \cite{GH1}, \cite{GHAalto}, and \cite{Lesch}.

In this paper we study the Hodge structure and signature pairing for pseudomanifolds $X$ with one smooth singular stratum $B$.  
Let $Y\stackrel{\phi}{\to}B$ denote the link bundle of $B$ and $F$ be the link.  Let $M = X-B$.  It is also useful to define the blowup of $X$ along
$B$, which is a manifold with boundary $\olM:= M \cup [0,1) \times Y$, where we fix an identification $U \cong (0,1) \times Y$,
of the regular stratum of a normal neighborhood of $B$ in $X$.  Let $x$ denote the variable on $(0,1)$, and extend it as a smooth function $>1$ on the rest of $M$.  We can think of $U$ 
as ``the end" of $M$, and we obtain $\olM$ by adding an additional copy of $Y$ at $x=0$.

On a pseudomanifold, $X$, the appropriate cohomology theory to study for Hodge results and signatures is the intersection
cohomology.  Intersection cohomology is a family of cohomologies on $X$ parametrised by a function called the perversity.
In the original definition of intersection (co)homology, a perversity is a function
$\frakp:\mathbb{N} \to \mathbb{N}$ satisfying certain restrictions.
Near a point on a codimension $l$ stratum of a pseudomanifold, $X$,
$\frakp(l)$ determines the form of the Poincar\'e lemma.
Thus when there is a single, connected, smooth singular stratum $B \subset X$ of codimension
$f+1$, the only part of the perversity which affects $IH_\frakp^*(X)$, is the value $\frakp(f+1)$.  
We can first simplify notation in this setting by labelling the intersection cohomology
spaces by $p:=\frakp(f+1)$ rather than by $\frakp$.  Further, we will fix notation such that the Poincar\'e lemma
for a cone has the form:
\[
IH^j_p(C(F)) = \left\{ 
\begin{array}{ll}
H^j(F) & j< p \\
0 & j \geq p.
\end{array} \right.
\]

In terms of this convention, standard perversities (those satisfying the original restrictions)
satisfy $0 < p \leq f $. We use an extension of these definitions
in which $p \in \mathbb{R}$.   This does not
give anything dramatically new; when $p \leq 0$,
we get $H^*(X,B)$, whereas when
$p > f$ we get
$H^*(X-B)$. For $p \notin \mathbb{Z}$, we simply get the same thing as for 
the perversity given by the floor function $\lfloor p \rfloor$.  Thus for any $p \in {\mathbb R}$
we fix the notation
\begin{equation}
IH^*_{p}(X,B) = \left\{ \begin{array}{ll}
H^*(X-B) &p > f, \\
IH_p^*(X) & 0< p \leq f, \\
H^*(X, B) & p\leq 0,
\end{array} \right. .
\label{eq:extih}
\end{equation}

This notation will simplify the statements and proofs of results, and in addition, allows us to consider situations in which $B$ is a codimension one stratum, i.e.\ a boundary, together with the case in which $B$ is a singular stratum. In the case where $B$ is a boundary, the link of a point on $B$ is simply a point, so the local calculations corresponding to
$p > 0$ and $p \leq 0$ give absolute and relative cohomologies, respectively. In the remainder of this paper, we will assume that $X$ has no boundary, except in the case that its boundary is $B$.

The metrics we will consider on $M$ are various classes of fibred cusp metrics.  The most general definition of
a fibred cusp metric is that it is $x^2$ times a section of the symmetric product of the $\phi$ cotangent bundle with itself, see \cite{GH1},
for example.  In our calculations, we will be able to restrict to product type fibred
cusp metrics, which are smooth metrics on $M$ which on the end have the form:
\[
g_{fc} = \frac{dx^2}{x^2} + \phi^*ds_B^2 + x^2h.
\]
Here $\phi^*ds_B^2$ represents the lift to $Y$ of a metric on the base, $B$.  
For any choice of smooth horizontal tangent bundle, complementary to the vertical tangent bundle of the fibres, 
we choose a symmetric bilinear form, $h$, on $Y$ that is diagonal with respect to this splitting,  vanishes on 
the horizontal tangent bundle and is positive definite on the vertical tangent bundle.  
Geometrically, such metrics mean that the manifold looks at the end
like a cusp bundle over a compact base.  In particular, these metrics are geodesically complete.  

Note that the term ``product" in the phrase ``product type fibred cusp metric" does not refer to the metric on $Y$,
but rather the fact that $ds_B^2$ and $h$ are independent of $x$.   
In general, we cannot assume that the metric on $Y$ is even locally a product.  This is possible when
$Y$ is a flat bundle with respect to the structure group ${\rm Isom}(F,g_F)$ for some fixed metric $g_F$
on $F$.  When this is possible, and when we take the metric on $Y$ to be a local product, we say that
the metric $g_{fc}$ is a {\em geometrically flat} fibred cusp metric.  Whereas the results below about weighted $L^2$ 
harmonic forms require only that the first order Gauss Bonnet operator is Fredholm with a particular regularity result, the
results for extended
harmonic forms require regularity and Fredholm results for the (second order) Hodge Laplacian.  As has been noted in
\cite{GHAalto}, reflecting a similar condition required for the results in \cite{JM}, this requires the more stringent condition 
that the metric is an ``admissible perturbation" of a geometrically flat fibred cusp metric.  
The background analytic results we require for the proofs are identical for the 
reference metric and the admissible perturbation, thus in the proofs, we can simply consider product
type and geometrically flat fibred cusp metrics.  

The first result in this paper relates weighted $L^2$ harmonic forms for fibred cusp metrics  on 
$M$ to intersection cohomology of $X$ of various perversities.  This theorem is a slight generalisation of the 
results in \cite{HHM}, and the technique of proof is similar.  First we define the notation used in 
the theorem.  The forms we will consider lie in weighted $L^2$ spaces of smooth forms for the metric $g_{fc}$:
\[
x^cL^2\Omega^*(M,g_{fc}):=\{ \eta \in \Omega^*(M) \mid \int_M x^{-2c}|\eta|_g^2 \, dvol_g\}
\]
\[
:=  \{ \eta =x^c\omega \mid \omega \in  L^2\Omega^*(M,g_{fc}) \}.
\]
This space is equipped with a metric.  For $\tau, \eta \in x^cL^2\Omega^*(M,g_{fc})$, we define
\[
<\tau, \eta>_{c}:= \int_M x^{-c}\tau \wedge *\, x^{-c}\eta : = \int_M \tau \wedge *_c \eta,
\]
where $*$ is the standard Hodge star for forms over $M$ with respect to the volume form on $M$
coming from $g_{fc}$ and $*_c=x^{-2c}*$.  We define $\delta_{fc,c}$ to be the formal adjoint of $d$ with respect
to this weighted pairing.  The spaces of harmonic forms we consider are the weighted $L^2$ harmonic
forms, 
\[
\calH_{L^2}^j(M,g_{fc},c):= \{\omega \in x^cL^2\Omega^*(M,g_{fc}) \mid (d+\delta_{fc,c}) \omega =0\},
\]
and the extended weighted harmonic forms:
\[
\mathcal{H}^j_{ext}(M, g_{fc}, c):= \{\omega \in \cap_{\epsilon>0} x^{c-\epsilon} L^2\Omega^*(M,g_{fc}) \mid (d+\delta_{fc,c}) \omega =0\}.
\]
Note that due to the weighting factor, the operator $d+\delta_{fc,c}$ is not the Gauss-Bonnet operator associated to any metric on $M$ except in the case that
$c=0$, in which it is the operator associated to the fibred cusp metric $g_{fc}$.

We can now state our theorems.
%%%%%%%%%%%%%%THEOREM 1%%%%%%%%%%%%%%%%%%%%%%%%
\begin{thm}\label{l2hodge}  Let $X$ be a pseudomanifold with a smooth singular stratum, $B$, 
and assume the fibre $F$ of the link bundle over $B$ has dimension $f$.  Endow $M=X - B$ with
any fibred cusp metric $g_{fc}$. 
Then the weighted $L^2$ harmonic forms on $M$ can be interpreted
as:
\[
\mathcal{H}^j_{L^2}(M, g_{fc}, c) \cong {\rm Im}\left(IH^j_{(f/2)-c-\epsilon}(X, B) \to IH^j_{(f/2)-c+\epsilon}(X, B)\right).
\]
Further, there exists a nondegenerate bilinear pairing
\[
\cap_{L^2}: \mathcal{H}^*_{L^2}(M, g_{fc}, c) \otimes \mathcal{H}^{n-*}_{L^2}(M, g_{fc}, -c) \to \mathbb{R}.
\]
When $c=0$, the signature of this pairing is the middle perversity perverse signature on $X$.
\end{thm}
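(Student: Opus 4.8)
\emph{Proof strategy for the signature statement.} The plan is to deduce it from the isomorphism in the first part of the theorem, by checking that this isomorphism carries $\cap_{L^2}$ to the intersection pairing that defines the perverse signature. Take $X$ compact and oriented and assume $4\mid n$, where $n=\dim M$ (the case in which a signature is defined); set $c=0$, so that $*_c=*$ and $\cap_{L^2}(\omega,\eta)=\int_M\omega\wedge\eta$. For $j\neq n/2$ the integrand on $\mathcal{H}^j_{L^2}(M,g_{fc},0)\otimes\mathcal{H}^{n-j}_{L^2}(M,g_{fc},0)$ has degree $\neq n$, so each factor is isotropic; the pairing being nondegenerate, $\mathcal{H}^j_{L^2}\oplus\mathcal{H}^{n-j}_{L^2}$ is then hyperbolic and the signature of $\cap_{L^2}$ equals that of its restriction to $\mathcal{H}^{n/2}_{L^2}(M,g_{fc},0)$, a symmetric form since $n/2$ is even. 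By the first part, an isomorphism $\Phi$, sending a harmonic form to its de Rham class, identifies $\mathcal{H}^{n/2}_{L^2}(M,g_{fc},0)$ with ${\rm Im}\bigl(IH^{n/2}_{(f/2)-\epsilon}(X,\Sigma)\to IH^{n/2}_{(f/2)+\epsilon}(X,\Sigma)\bigr)$; when $f$ is even these are the two middle perversities $\lfloor f/2-\epsilon\rfloor=f/2-1$ and $\lfloor f/2+\epsilon\rfloor=f/2$, and when $f$ is odd they coincide. By Goresky--MacPherson duality (the two perversities being complementary) the intersection pairing descends to a nondegenerate symmetric form on this image, and its signature is by definition the middle perversity perverse signature of $X$. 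So it suffices to prove that $\Phi$ is an isometry.

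To show this I recall, following \cite{HHM}, the construction of $\Phi$: an element $\omega\in\mathcal{H}^{n/2}_{L^2}(M,g_{fc},0)$ is closed and coclosed, and analysing the indicial roots of the fibred cusp Laplacian at $x=0$ shows that its leading asymptotics place it, as a cocycle, simultaneously in the weighted de Rham complexes on $M$ computing $IH_{(f/2)-\epsilon}(X,\Sigma)$ and $IH_{(f/2)+\epsilon}(X,\Sigma)$; then $\Phi[\omega]$ is the corresponding class in the image. For $\omega,\eta\in\mathcal{H}^{n/2}_{L^2}(M,g_{fc},0)$ the form $\omega\wedge\eta$ is integrable on $M$, since pointwise $|\omega\wedge\eta|_{g_{fc}}\le C|\omega|_{g_{fc}}|\eta|_{g_{fc}}$ forces $\int_M|\omega\wedge\eta|\le C\|\omega\|_{L^2}\|\eta\|_{L^2}<\infty$, and $\cap_{L^2}(\omega,\eta)=\int_M\omega\wedge\eta$. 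On the other hand the Goresky--MacPherson pairing between the complementary perversities $(f/2)\mp\epsilon$ has a de Rham description: it is computed by integrating over $M$ the wedge of a cocycle from one weighted complex with a cocycle from the other, the weight conditions being complementary precisely so that this integral converges and is independent of the representatives, an integration by parts leaving only a boundary term $\lim_{x_0\to 0}\int_{\{x=x_0\}}(\,\cdot\,)$ that vanishes by the growth conditions. Since $\omega$ and $\eta$ are cocycles lying in \emph{both} complexes, this de Rham pairing of $\Phi[\omega]$ and $\Phi[\eta]$ is again $\int_M\omega\wedge\eta$. Hence $\Phi$ intertwines the two forms, so they are congruent and the signature of $\cap_{L^2}$ equals the perverse signature of $X$.

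The step I expect to be the main obstacle is precisely this de Rham comparison: establishing that at $c=0$ the analytic pairing $\int_M\omega\wedge\eta$ is the topological intersection pairing with no correction term. Two points need care. First, the behaviour of $L^2$ harmonic forms near $x=0$: one must read off from the normal operator which indicial modes actually occur and check that a genuinely $L^2$ (not merely extended) harmonic form decays strictly faster than the critical rate in exactly the components that would otherwise obstruct its membership in the $IH_{(f/2)-\epsilon}$-complex or cause $\int_M\omega\wedge\eta$ to diverge; this is where the criticality of the weight $c=0$ when $f$ is even, and possible logarithmic terms in the asymptotic expansions, must be controlled, and why the clean statement is for the genuinely $L^2$ spaces (the extended forms, with their boundary limiting values, are where corrections enter in the later theorems). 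Second, one needs the weighted de Rham theorem identifying the weighted complexes with $IH_p(X,\Sigma)$ for $p=(f/2)-c$, together with a de Rham model of the Goresky--MacPherson pairing between complementary perversities and a verification that the $L^2$ decay kills every boundary term from integration by parts. The unweighted core of both inputs is in \cite{HHM}; the additional work here is to track the weights and perversities carefully and to confirm that on genuinely $L^2$ classes the analytic and topological pairings agree exactly.
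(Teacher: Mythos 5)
Your proposal addresses only the final clause of Theorem \ref{l2hodge}. The bulk of the theorem --- the isomorphism $\mathcal{H}^j_{L^2}(M,g_{fc},c)\cong{\rm Im}\bigl(IH^j_{(f/2)-c-\epsilon}\to IH^j_{(f/2)-c+\epsilon}\bigr)$ --- is taken as given, whereas in the paper this is where essentially all the work lies: one must show (via the regularity Theorem \ref{regfc}) that an $x^cL^2$ harmonic form is separately closed and coclosed, that the resulting map to the quotient ${\rm Ker}(d)\cap x^{c+\epsilon}L^2/\bigl(x^{c+\epsilon}L^2\cap d(x^{c-\epsilon}L^2)\bigr)$ is injective by an integration by parts justified through essential self-adjointness, and that it is surjective via the weighted Hodge decomposition of Lemma \ref{fchodge}, which in turn rests on the Fredholm property of $d+\delta_{fc,c}$ on split Sobolev spaces and the closedness of its range. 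Likewise the nondegeneracy of $\cap_{L^2}$ for general $c$ is not addressed; the paper gets it in one line from the fact that $*_c$ maps $\calH^*_{L^2}(M,g_{fc},c)$ isomorphically onto $\calH^{n-*}_{L^2}(M,g_{fc},-c)$ together with $\omega\cap_{L^2}*_c\omega=\|\omega\|_c^2\neq 0$. None of this is recoverable from what you have written, so as a proof of the stated theorem the proposal has a substantial gap.

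For the clause you do treat, your strategy --- reduce to middle degree by hyperbolicity of the off-diagonal blocks, identify the target as the perverse-signature space ${\rm Im}(IH^{n/2}_{f/2-\epsilon}\to IH^{n/2}_{f/2+\epsilon})$, and show the isomorphism is an isometry because both pairings are computed by $\int_M\omega\wedge\eta$ on harmonic representatives --- is sound, and is in fact more than the paper supplies: the paper disposes of the signature claim in a single sentence asserting that it follows from the isomorphism. You correctly isolate the real content, namely that the Goresky--MacPherson pairing between the complementary perversities $(f/2)\mp\epsilon$ admits a de Rham model by wedge-and-integrate on the weighted complexes of Lemma \ref{wcoho}, with representative-independence coming from the vanishing of boundary terms under the complementary weight conditions. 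But you do not carry this step out, and it is the one genuinely nontrivial point in the signature claim (the convergence bound $\int_M|\omega\wedge\eta|\le\|\omega\|_{L^2}\|\eta\|_{L^2}$ is immediate; the compatibility of the analytic pairing with the sheaf-theoretic one is not). So even within its restricted scope the argument is a correctly structured outline rather than a proof, though it is consistent with, and more explicit than, what the paper itself records.
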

\noindent
Perverse signatures are the natural signatures defined by the intersection pairing in intersection cohomology,
restricted to spaces of the form
\[
{\rm Im}(IH_{f/2-c}^*(X,B) \to IH_{f/2+c}^*(X,B)),
\]
and have been studied in \cite{Hu2} and \cite{FH}.

%%%%%%%%%%%%%%%THEOREM 2%%%%%%%%%%%%%%%%%%%%%%%%
\medskip
The next theorem relates ``extended" weighted $L^2$ 
harmonic forms for the same metrics to groups coming from intersection cohomology of $X$.  
This theorem is a generalisation of 
Proposition 6.16 from \cite{Me-aps}, which considers the case when $c=0$ and $f=0$, that is,
the fibre $F$ is trivial, and refines the theorem from
\cite{aps1} that says relative and absolute cohomology of a manifold with boundary
may be represented by subspaces of extended harmonic forms on the associated
manifold with infinite cylindrical end.

\begin{thm}\label{exthodge}
Let $X$ be an $n$ dimensional pseudomanifold with a smooth singular stratum, $B$, 
and assume the fibre $F$ of the link bundle over $B$ has dimension $f$.  Endow $M=X - B$ with
a metric,  $g_{fc}$, that is an admissible perturbation of a geometrically flat 
fibred cusp metric.  Then 
the space of extended weighted $L^2$ harmonic forms on $M$ decomposes into
three pieces
\[
\mathcal{H}^j_{ext}(M, g_{fc}, c) = dS_c + \delta_{fc,c}T_c + \mathcal{H}^j_{L^2}(M, g_{fc}, c),
\]
where the sums here are vector space direct sums.
We can naturally 
interpreted this space in terms of intersection cohomology of $X$ through the following isomorphisms:
\[
IH^j_{(f/2)-c-\epsilon}(X, B) \cong  dS_c + \calH_{L^2}^j(M,g_{fc},c) 
\]
and
\[
IH^j_{(f/2)-c+\epsilon}(X, B) \cong \delta_{fc,c}T_c + \calH_{L^2}^j(M,g_{fc},c).
\]
\end{thm}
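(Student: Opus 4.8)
\medskip
\noindent\textbf{Plan of proof.}
The plan is to import the $\phi$-pseudodifferential analysis of \cite{GH2} and the weight-to-perversity dictionary already used in Theorem~\ref{l2hodge}, and then follow the template of \cite{aps1} and of Proposition~6.16 of \cite{Me-aps}, promoting their cylindrical ``limiting value'' map to a ``leading coefficient'' map at the fibred cusp end. By the remark preceding Theorem~\ref{l2hodge}, the relevant analytic statements are unchanged under admissible perturbation, so throughout I work with a product-type, geometrically flat fibred cusp metric. For such a metric $D_c:=d+\delta_{fc,c}$ is essentially self-adjoint (completeness), its indicial family has a discrete set of real roots, and the only root relevant to the difference between $\calH^j_{L^2}$ and $\calH^j_{ext}$ at a given weight $c$ is the one sitting at $c$ itself; this root is present precisely when $(f/2)-c$ is an integer lying in the range governing $IH^*_{(f/2)-c}(X,\Sigma)$ (interpreting the endpoints as in \eqref{eq:extih}). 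At such a critical $c$, every $\omega\in\calH^j_{ext}(M,g_{fc},c)$ has a partial asymptotic expansion at $x=0$ with a well-defined leading coefficient, and $\calH^j_{ext}(M,g_{fc},c)$ is finite dimensional.

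Next I would set up the leading-coefficient map and the three-term splitting. A pure-degree form $\omega\in\calH^j_{ext}(M,g_{fc},c)$ is automatically \emph{both} closed and coclosed: $d\omega$ has degree $j+1$ and $\delta_{fc,c}\omega$ has degree $j-1$, so they must vanish separately in $D_c\omega=0$. Hence the leading coefficient $\beta(\omega)$ is a harmonic form on the link bundle $Y$ for the induced geometrically flat structure, and the Leray--Hirsch/K\"unneth splitting available in the geometrically flat case expresses it through harmonic forms on the base $B$ valued in the fibre cohomology $H^*(F)$. One checks, as in \cite{HHM}, that $\ker\beta=\calH^j_{L^2}(M,g_{fc},c)$. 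The image of $\beta$ carries the Hodge-star duality of the cross-section and splits canonically into a ``$d$-exact type'' part (leading coefficients killed by adding $d$ of a weight-$(c-\epsilon)$ primitive) and a ``$\delta$-exact type'' part; choosing $S_c$ (a space of $(j-1)$-forms) and $T_c$ (a space of $(j+1)$-forms) so that $dS_c$ and $\delta_{fc,c}T_c$ are lifts of these two parts, they together with $\ker\beta$ yield the asserted vector-space direct sum $\calH^j_{ext}(M,g_{fc},c)=dS_c\oplus\delta_{fc,c}T_c\oplus\calH^j_{L^2}(M,g_{fc},c)$. The weighted Hodge star $*_c\colon\calH^j_{ext}(M,g_{fc},c)\to\calH^{n-j}_{ext}(M,g_{fc},-c)$ interchanges $dS_c$ with $\delta_{fc,-c}T_{-c}$ and fixes the middle summand, which I will use to deduce the second isomorphism from the first.

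Then I would identify $\mathcal{IH}^j_{(f/2)-c-\epsilon}(M,g_{fc})=dS_c+\calH^j_{L^2}(M,g_{fc},c)$ with $IH^j_{(f/2)-c-\epsilon}(X,\Sigma)$. Every element is a closed form on $M$, and feeding its $x=0$ asymptotics into the de Rham model of intersection cohomology on the blowup $\olM$, as in \cite{HHM} and \cite{JM}, shows it is an allowable cochain for the perversity $(f/2)-c-\epsilon$; this is precisely where the normalisation $c\mapsto(f/2)-c$ and the sign of $\epsilon$ are forced. This gives a natural map to $IH^j_{(f/2)-c-\epsilon}(X,\Sigma)$. By Theorem~\ref{l2hodge} it carries $\calH^j_{L^2}(M,g_{fc},c)$ isomorphically onto the image of $IH^j_{(f/2)-c-\epsilon}(X,\Sigma)\to IH^j_{(f/2)-c+\epsilon}(X,\Sigma)$, so it remains to see that $dS_c$ maps isomorphically onto the kernel of this comparison map. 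I would compute that kernel from the long exact sequence relating the two perversities, whose ``jump term'' is the cohomology of $B$ with coefficients in the local system $H^k(F)$ for $k=(f/2)-c$; the K\"unneth description of $\beta(dS_c)$ realises exactly the classes that make up the kernel, injectivity of $dS_c\to\ker$ follows from an integration by parts on $\olM$ whose boundary contribution vanishes because of the $\epsilon$-gap in weights, and comparing dimensions closes the argument. Applying $*_c$ together with Poincar\'e duality in intersection cohomology (degree $j\mapsto n-j$, weight $c\mapsto -c$, dual perversity) converts this into the companion isomorphism for $\delta_{fc,c}T_c+\calH^j_{L^2}(M,g_{fc},c)$.

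I expect the main obstacle to be this third step: matching, uniformly in the base variable, the fibrewise leading asymptotics of closed-and-coclosed forms in $\cap_{\epsilon>0}x^{c-\epsilon}L^2\Omega^*(M,g_{fc})$ against the local cone allowability conditions defining $IH^j_{(f/2)-c\mp\epsilon}(X,\Sigma)$, and upgrading the resulting natural map from merely well defined to an isomorphism. This is where the hypothesis that $g_{fc}$ is an admissible perturbation of a \emph{geometrically flat} metric is indispensable: it is what makes the harmonic forms on the link bundle split by an honest K\"unneth formula over $B$, and the failure of that splitting for general fibred cusp metrics is exactly why the clean three-term decomposition is stated only for the product-type representative.
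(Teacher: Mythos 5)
Your outline reproduces the broad architecture of the paper's argument (Fredholm/indicial-root analysis at the critical weight, closed-and-coclosed by degree reasons, a boundary leading-coefficient map with kernel $\calH^j_{L^2}$, and $*_c$ to pass between the two perversities), but it has a genuine gap at the central step. The three-term splitting is not obtained by ``choosing $S_c$ and $T_c$ so that $dS_c$ and $\delta_{fc,c}T_c$ are lifts'' of a splitting of the image of the leading-coefficient map: the \emph{existence} of such primitives is the main analytic content of the theorem, and your plan asserts it rather than proves it. The paper produces $S_c$ and $T_c$ as nullspaces of the Laplacian in the enlarged weighted space, $S_c=null(\Delta_{fc,c}^{j-1})$ and $T_c=null(\Delta_{fc,c}^{j+1})$ with $null(\Delta_{fc,c}^{k})\subset x^{c+1-\epsilon}H^1_{split}\Omega^k$. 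This rests on a global Hodge decomposition of $x^{c-\epsilon}L^2\Omega^j$ as $\Delta_{fc,c}(x^{c+2-\epsilon}H^2_{split}\Omega^j)\oplus\calH^j_{L^2}$ (Lemmas \ref{lemhodge-ep} and \ref{lemhodge-ep2}, proved by an annihilator argument against the $x^{c+\epsilon}L^2$ decomposition together with closedness of the range from the Fredholm theorem), which is then intersected with $\calH^j_{ext}$ and split as $d\delta_{fc,c}W^j+\delta_{fc,c}dW^j$ in Lemma \ref{4.4}. This is precisely where geometric flatness enters: one needs Fredholm and regularity theory for $\Delta_{fc,c}$, not merely for $d+\delta_{fc,c}$ (Theorems \ref{fredfclap} and \ref{regfclap}), and the paper states explicitly that this is the reason for the flatness hypothesis. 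Your proposal attributes the role of flatness only to the K\"unneth splitting of link harmonic forms, which is a secondary point. Finally, the directness of the sum $dS_c\cap\delta_{fc,c}T_c=\{0\}$ is not visible from leading coefficients alone; the paper proves it by an integration by parts whose boundary term vanishes because one summand's leading term carries a $\frac{dx}{x}$ factor and the other's does not.

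On the cohomological identifications your orientation is reversed relative to the paper's, which is a legitimate but costlier route. The paper maps $\delta_{fc,c}T_c+\calH^j_{L^2}$ into the weighted $L^2$ cohomology $WH^j_{fc}(M,c-\epsilon/2)\cong IH^j_{(f/2)-c+\epsilon}(X,B)$ via Lemma \ref{wcoho}, gets surjectivity for free from the Laplacian Hodge decomposition and injectivity from the extended pairing (Lemma \ref{GI2}), and then obtains the $(f/2)-c-\epsilon$ case formally by $*_c$ and Poincar\'e duality for intersection cohomology (Lemma \ref{GI1}). You propose to prove the $(f/2)-c-\epsilon$ case directly and dualise the other way; doing that requires a cochain model of $IH^j_{(f/2)-c-\epsilon}$ with boundary vanishing conditions and the two-perversity long exact sequence, machinery the paper only develops later for Theorem \ref{les}. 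That reversal is workable, but it front-loads the harder topological input while the analytic lift described above is still missing.
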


This theorem justifies the following notation:
\[
\mathcal{IH}^j_{(f/2)-c-\epsilon}(M,g_{fc}) := dS_c + \calH_{L^2}^j(M,g_{fc},c) 
\]
and
\[
\mathcal{IH}^j_{(f/2)-c+\epsilon}(M,g_{fc}) := \delta_{fc,c}T_c + \calH_{L^2}^j(M,g_{fc},c).
\]

%%%%%%%%%%%%%%%%%%%%THEOREM 4%%%%%%%%%%%%%%%%%%%%%%
We can refine this to the following theorem, which generalises Proposition 6.18 in \cite{Me-aps}, and shows how 
the spaces of extended harmonic forms on $M$ and the intersection cohomology on $X$ fit together:
\begin{thm}\label{les}  Assume $p$ is an integer.
If the link bundle $Y\to B$ of $B$ in $X$ is geometrically flat, then there exists a long exact sequence on cohomology:
\[
\cdots \to H^{j-p-1}(B,H^p(F)) \stackrel{\partial}{\longrightarrow} IH_{p-1}^j(X) \stackrel{inc^*}{\longrightarrow} IH_{p}^j(X) 
\stackrel{r}{\longrightarrow} H^{j-p}(B,H^p(F)) \to \cdots.
\]
If $g_{fc}$ is an admissible perturbation of a geometrically flat 
fibred cusp metric on $M$, then there is in addition a commutative diagram for each $j$ and $p$:
\[
\xymatrix{
\calH^{j-p-1}(B,\calH^p(F)) \ar[r]^{\tilde\partial} \ar[d]^{\cong}& \mathcal{IH}^j_{p-\epsilon}(M,g_{fc}) \ar[r]^{inc^*}  \ar[d]^{\cong}& \mathcal{IH}^j_{p+\epsilon}(M,g_{fc}) 
\ar[r]^{r}  \ar[d]^{\cong}&\calH^{j-p}(B,\calH^p(F)) \ar[d]^{\cong} \\
H^{j-p-1}(B,H^p(F)) \ar[r]^{\qquad \partial} & IH_{p-1}^j(X) \ar[r]^{inc^*} & IH_{p}^j(X) 
\ar[r]^{r \qquad} &H^{j-p}(B,H^p(F)) .
}
\]
\end{thm}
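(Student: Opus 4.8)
The plan is to prove the \emph{analytic} long exact sequence of extended harmonic forms first, and then to read off both the topological long exact sequence and the commutative diagram by transporting everything through the isomorphisms of Theorems~\ref{l2hodge} and~\ref{exthodge}. Since the background elliptic theory of the previous section is insensitive to admissible perturbations, it suffices to treat $g_{fc}$ as a genuine geometrically flat fibred cusp metric; and since the topological statement concerns an integer perversity $p$, we fix $c = f/2 - p$ throughout, so that the weighted spaces of Theorem~\ref{exthodge} are the ones in play. The structural input I would build everything on is the decomposition $\mathcal{H}^j_{ext}(M,g_{fc},c) = dS_c \oplus \delta_{fc,c}T_c \oplus \mathcal{H}^j_{L^2}(M,g_{fc},c)$ together with $\mathcal{IH}^j_{p-\epsilon}(M,g_{fc}) = dS_c \oplus \mathcal{H}^j_{L^2}$ and $\mathcal{IH}^j_{p+\epsilon}(M,g_{fc}) = \delta_{fc,c}T_c \oplus \mathcal{H}^j_{L^2}$: both are subspaces of $\mathcal{H}^j_{ext}$, they meet in $\mathcal{H}^j_{L^2}$, and they span it.

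On this basis I would construct the analytic sequence. Let $r$ on $\mathcal{IH}^j_{p+\epsilon}$ be the leading asymptotic coefficient of an extended harmonic form along the end: by the model operator and indicial root analysis cited in the previous section, for the weight $c$ this leading term is a fibrewise harmonic $p$-form with coefficients in a harmonic form on $B$, and geometric flatness makes the bundle of fibrewise harmonic $p$-forms genuinely flat, so the leading term lies in $\mathcal{H}^{j-p}(B,\mathcal{H}^p(F))$; by construction $r$ vanishes exactly on $\mathcal{H}^j_{L^2}$ and is injective on $\delta_{fc,c}T_c$. Let $inc^*\colon \mathcal{IH}^j_{p-\epsilon}\to\mathcal{IH}^j_{p+\epsilon}$ be the composite of the inclusion $\mathcal{IH}^j_{p-\epsilon}\hookrightarrow \mathcal{H}^j_{ext}$ with the projection $\mathcal{H}^j_{ext}\to \mathcal{H}^j_{ext}/dS_c \cong \mathcal{IH}^j_{p+\epsilon}$; its kernel is $dS_c$ and its image is $\mathcal{H}^j_{L^2}$. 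Let $\tilde\partial\colon \mathcal{H}^{j-p-1}(B,\mathcal{H}^p(F))\to \mathcal{IH}^j_{p-\epsilon}$ be the connecting map of Atiyah--Patodi--Singer and Melrose type (\cite{aps1}, \cite{Me-aps}): extend a link class to a non-closed form on $M$ whose leading term represents it and apply $d$, obtaining the element of $dS_c$ with the prescribed subleading behaviour. Exactness at $\mathcal{IH}^j_{p-\epsilon}$ and at $\mathcal{IH}^j_{p+\epsilon}$ is then immediate from the direct-sum decomposition and the descriptions of the three maps. Exactness at $\mathcal{H}^{j-p}(B,\mathcal{H}^p(F))$ is the real content: one must show that the image of $r$ equals the kernel of $\tilde\partial$, i.e.\ that a fibre-degree-$p$ harmonic class on the link bundle either is the leading term of a genuine element of $\mathcal{H}^j_{ext}$ (hence in the image of $r$, through $\delta_{fc,c}T_c$) or is hit by $\tilde\partial$ (as the leading term of a primitive), and that these possibilities are exhaustive and complementary. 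This is exactly where the delicate half of Theorem~\ref{exthodge} is used --- the assertion that $dS_c$ and $\delta_{fc,c}T_c$ realise the critical link cohomology in two complementary ways --- together with the Hodge decomposition on the closed manifold $Y$. I expect this step to be the main obstacle.

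Given the analytic sequence, the commutative diagram and the topological sequence come out together. The top row of the diagram is precisely the analytic sequence just built; the left and right verticals are the twisted Hodge isomorphisms $\mathcal{H}^{j-p}(B,\mathcal{H}^p(F))\cong H^{j-p}(B,H^p(F))$ on the closed manifold $B$ with coefficients in the flat bundle $\mathcal{H}^p(F)\cong H^p(F)$, and the middle verticals are the isomorphisms $\mathcal{IH}^j_{p-\epsilon}(M,g_{fc})\cong IH^j_{p-1}(X)$ and $\mathcal{IH}^j_{p+\epsilon}(M,g_{fc})\cong IH^j_{p}(X)$ of Theorem~\ref{exthodge}. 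Since the top row is exact and all verticals are isomorphisms, once the squares commute the bottom row is automatically exact, which is the topological long exact sequence. Commutativity of the right-hand square holds because, under the de Rham / $IC$ comparison of Theorem~\ref{exthodge}, restricting a class to $M$ near $B$ and extracting its fibre-degree-$p$ part on the link is the same operation as reading off the leading coefficient of a representing extended harmonic form --- a purely local statement on the end, where the fibred cusp metric makes that coefficient literally a harmonic form for the rescaled link metric, hence a form on $B$ valued in fibrewise harmonic $p$-forms. Commutativity of the middle square is the compatibility of the isomorphism $\mathcal{H}^j_{L^2}\cong {\rm Im}(IH^j_{p-1}(X)\to IH^j_p(X))$ of Theorem~\ref{l2hodge} with the decompositions of Theorem~\ref{exthodge}: the summand $dS_c$ corresponds under $\mathcal{IH}^j_{p-\epsilon}\cong IH^j_{p-1}(X)$ to $\ker(IH^j_{p-1}(X)\to IH^j_p(X))$, so the analytic $inc^*$ transports to the natural comparison map. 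Commutativity of the left-hand square then follows from the other two squares and exactness of both rows by a short diagram chase (equivalently, the five lemma).

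As an alternative one could produce the bottom row directly, without the metric, by comparing the Mayer--Vietoris sequences of $IH^*_{p-1}$ and $IH^*_{p}$ for the decomposition of $X$ into a cone-bundle neighbourhood of $B$ and $M$, using geometric flatness to split the Leray spectral sequence of $\phi$ so that the term distinguishing the two sequences is exactly $H^{j-p}(B,H^p(F))$; either way the flatness hypothesis is what makes this sequence exist, and I would present the analytic route since it yields the diagram at the same time. A secondary point requiring care throughout is the perversity--weight dictionary with its $f/2$ shift, which must be tracked consistently when matching the topological maps $\partial$ and $r$ with their analytic counterparts $\tilde\partial$ and $r$.
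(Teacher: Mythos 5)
Your route inverts the paper's: you build the analytic (top) row first from the direct-sum decomposition of $\mathcal{H}^j_{ext}$ and try to push exactness down to the topological row, whereas the paper constructs the bottom row directly --- from a short exact sequence of truncated de Rham complexes $0 \to I_{p-1}\Omega^*_0(\olM) \to I_p\Omega^*_0(\olM) \to \Omega^*_{c\tau,p-1,p}(Y) \to 0$ built out of the fibrewise Hodge decomposition (this is where geometric flatness enters, and it needs no metric on $M$) --- then realises $IH^j_{p-1}(X)$ as the cohomology of a weighted conormal $L^2$ complex with a boundary-vanishing condition, maps $\mathcal{IH}^j_{p-\epsilon}$ into that complex, proves injectivity plus a dimension count for the second vertical, and finally \emph{defines} $\tilde\partial$ as the map making the first square commute. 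The inversion is not merely cosmetic: the first assertion of the theorem is stated under the weaker hypothesis that only the link bundle is flat, so it cannot be obtained by transporting an analytic sequence that presupposes the metric; and even with the metric, transporting exactness only produces \emph{some} exact sequence on the bottom unless you independently identify the transported maps with the natural $inc^*$ and $r$, which is precisely the content of proving the squares commute against independently defined topological maps.

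Two concrete gaps follow from this. First, your argument for exactness at $\mathcal{H}^{j-p}(B,\mathcal{H}^p(F))$ --- which you correctly flag as the main obstacle --- leans on the claim that the boundary values of $dS_c$ and $\delta_{fc,c}T_c$ fill out complementary subspaces of $\calH^*(B,\calH^p(F))$. That statement is not in Theorem \ref{exthodge}; it is Theorem \ref{1.6}, and in the paper its proof (the Lagrangian/dimension-count argument via the boundary pairing $iB$) explicitly invokes Theorem \ref{les} to know that ${\rm Im}(BD)$ is half-dimensional. As structured, your proposal is therefore circular at exactly the step you identify as hardest, and no independent argument is supplied. Second, your logic for the diagram is internally circular: you deduce exactness of the bottom row from commutativity of all three squares, but you deduce commutativity of the left square from exactness of both rows via the five lemma. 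One of the two must be established independently; the paper resolves this by proving bottom-row exactness first (Lemma \ref{lemma6.3}) and then taking the left square as the definition of $\tilde\partial$. A smaller but real issue is your description of $\tilde\partial$: $d$ applied to a cutoff extension $\chi\pi^*\alpha$ of a link class is exact and decaying, not an element of $dS_c$; producing the actual element of $dS_c$ requires solving an elliptic problem to correct the extension, which needs to be spelled out.
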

\noindent
If $p$ is not an integer, then the diagram collapses, with the spaces on the far right and left equaling $\{0\}$ and the
spaces in the middle all being isomorphic, and also isomorphic to $\calH^j_{L^2}(M,g_{fc})$.

%%%%%%%%%%%%%%%%%%%THEOREM 3%%%%%%%%%%%%%%%%%%%%%

The extended spaces of harmonic forms satisfy some interesting additional properties, which are summarised in 
the next theorem.
\begin{thm}\label{1.6}
If $g_{fc}$ is an admissible perturbation of a geometrically flat fibred cusp metric on $M$,
then 
the following form of Poincar\'e duality holds on extended harmonic forms:
the operator $*_c = x^{-2c}*$, where $*$ is the Hodge star operator on $L^2\Omega^*(M,g_{fc})$, gives
an isomorphism
\begin{equation}\label{poincare}
*_c: \mathcal{IH}^j_{(f/2)-c-\epsilon}(M,g_{fc}) \to \mathcal{IH}^{n-j}_{(f/2)+c+\epsilon}(M,g_{fc}).
\end{equation}
Further, when $f/2-c$ is an integer, there are natural boundary data maps,
\[
{\rm bd_T}:\delta_{fc,c}{\rm T}_c \to \calH^*(B,\calH^{f/2-c}(F)), \qquad {\rm bd_S}:d{\rm S}_c \to \calH^*(B,\calH^{f/2-c}(F)),
\]
whose images are complementary orthogonal subspaces of $H^*(B,H^{f/2-c}(F))$.  Thus in particular, when $f$ is even and $c=0$,
\[
{\rm Im(bd_T)} \oplus {\rm Im(bd_S)} \subset H^*(B,H^{f/2}(F)) \oplus H^*(B,H^{f/2}(F))
\]
is a Lagrangian subspace under the boundary pairing 
\[
iB((u_1,u_2),(v_1,v_2)) = \langle u_1,v_2\rangle_Y - \langle u_2,v_1 \rangle_Y.
\]

\end{thm}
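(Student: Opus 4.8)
The plan is to prove the two assertions of the theorem in turn: first the duality \eqref{poincare}, directly from the algebra of $*_c$ together with the decomposition in Theorem~\ref{exthodge}; then the boundary-data statement, bootstrapped off Theorem~\ref{les}. Throughout, as explained in the introduction, we replace the admissible perturbation by its geometrically flat reference metric, so that $g_{fc}$ may be taken product-type and geometrically flat; then the fibrewise Hodge Laplacian decouples at leading order, the spaces $\calH^{f/2-c}(F)$ assemble into a flat Hermitian bundle over $B$, and Hodge theory over $B$ with these coefficients identifies $\calH^*(B,\calH^{f/2-c}(F))\cong H^*(B,H^{f/2-c}(F))$.

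For \eqref{poincare} I would first record that $*_c=x^{-2c}*$ restricts to an isomorphism $x^cL^2\Omega^*(M,g_{fc})\to x^{-c}L^2\Omega^*(M,g_{fc})$, and likewise on the intersections $\cap_{\epsilon>0}x^{c\mp\epsilon}L^2$, with inverse a sign times $*_{-c}$, since $*$ is an isometry of the unweighted $L^2$ space and $*_{-c}*_c=\pm\mathrm{id}$. As $\delta_{fc,c}$ is the formal adjoint of $d$ for $\langle\cdot,\cdot\rangle_c$, an integration by parts---with boundary terms at $x\to0$ absent by completeness---gives the pointwise identity $\delta_{fc,c}=\pm\,*_{-c}\,d\,*_c$, and symmetrically $\delta_{fc,-c}=\pm\,*_c\,d\,*_{-c}$; combined with $*_{-c}*_c=\pm\mathrm{id}$ this shows $*_c$ carries $(d+\delta_{fc,c})$-harmonic $j$-forms to $(d+\delta_{fc,-c})$-harmonic $(n-j)$-forms, so it gives isomorphisms $\calH^j_{ext}(M,g_{fc},c)\cong\calH^{n-j}_{ext}(M,g_{fc},-c)$ restricting to $\calH^j_{L^2}(M,g_{fc},c)\cong\calH^{n-j}_{L^2}(M,g_{fc},-c)$. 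In the fibred cusp coframe $\{x^{-1}dx,\ \phi^*(\text{base}),\ x\cdot(\text{fibre})\}$, the star $*$ interchanges forms whose leading asymptotic term does carry the factor $x^{-1}dx$ with those whose leading term does not, and sends fibrewise harmonic degree $f/2-c$ to $f/2+c$. From the proof of Theorem~\ref{exthodge}, $dS_c$ consists of the extended harmonic forms whose leading term carries $x^{-1}dx$, $\delta_{fc,c}T_c$ of those whose leading term does not, and $\calH^j_{L^2}$ of those with vanishing leading term; hence $*_c(dS_c)\subseteq\delta_{fc,-c}T_{-c}+\calH^{n-j}_{L^2}(M,g_{fc},-c)$, and comparing dimensions via the bijectivity of $*_c$ gives
\[
*_c\bigl(dS_c+\calH^j_{L^2}(M,g_{fc},c)\bigr)=\delta_{fc,-c}T_{-c}+\calH^{n-j}_{L^2}(M,g_{fc},-c),
\]
which, by the definitions in Theorem~\ref{exthodge}, is exactly \eqref{poincare}; this is consistent with the topological duality $IH^j_p(X,\Sigma)\cong IH^{n-j}_{f-p}(X,\Sigma)$, under which $p=(f/2)-c$ is dual to $(f/2)+c$.

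For the boundary data, define $\mathrm{bd}_S$ on $\omega\in dS_c$ (resp. $\mathrm{bd}_T$ on $\eta\in\delta_{fc,c}T_c$) to be the $Y$-form coefficient of $x^{-1}dx$ in the leading asymptotic term of $\omega$ (resp. the leading term of $\eta$): by the asymptotics underlying Theorem~\ref{exthodge} this is a fibrewise harmonic form of fibre degree $f/2-c$ over $B$, i.e. an element of $\calH^*(B,\calH^{f/2-c}(F))$; and an element of $dS_c$ or $\delta_{fc,c}T_c$ with vanishing leading coefficient lies in $\calH^*_{L^2}(M,g_{fc},c)$, hence is $0$ by directness of the sum in Theorem~\ref{exthodge}, so $\mathrm{bd}_S,\mathrm{bd}_T$ are injective. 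The essential point is that, under the isomorphisms of Theorem~\ref{les} identifying $\mathcal{IH}^j_{p-\epsilon}(M,g_{fc})$ with $IH^j_{p-1}(X)$ and $\mathcal{IH}^j_{p+\epsilon}(M,g_{fc})$ with $IH^j_p(X)$ (for $p=(f/2)-c$ an integer), together with $\calH^*(B,\calH^p(F))\cong H^*(B,H^p(F))$, the map $\mathrm{bd}_T$ corresponds to the restriction map $r$, which kills $\calH^*_{L^2}\cong\mathrm{Im}(inc^*)$, so that $\mathrm{Im}\,r=\mathrm{Im}\,\mathrm{bd}_T$. Granting this, in each $B$-degree $k$ exactness of the sequence in Theorem~\ref{les} gives $\ker\partial=\mathrm{Im}\,r=\mathrm{Im}\,\mathrm{bd}_T$, while from Theorem~\ref{l2hodge} $\dim dS_c=\dim\ker(inc^*)=\dim\mathrm{Im}\,\partial$, so that $\dim(\mathrm{Im}\,\mathrm{bd}_S)|_k+\dim(\mathrm{Im}\,\mathrm{bd}_T)|_k=\dim H^k(B,H^p(F))$. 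Together with the orthogonality proved next, which forces the two images to be linearly independent, this shows they are complementary.

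Orthogonality is a Green's-formula computation on the truncations $M_\delta:=\{x\ge\delta\}$, compact with boundary $\{x=\delta\}\cong Y$. For $\omega\in dS_c$ and $\eta\in\delta_{fc,c}T_c$ of degrees such that $\mathrm{bd}_S(\omega)$ and $\mathrm{bd}_T(\eta)$ lie in the same summand of $\calH^*(B,\calH^{f/2-c}(F))$, both forms are closed, and the weighted harmonicity of $\omega$ gives $d(*_c\omega)=0$ (the content of $\delta_{fc,c}=\pm*_{-c}d*_c$); hence $d(\eta\wedge*_c\omega)=0$, so $\int_{\{x=\delta\}}\eta\wedge*_c\omega=0$ for every $\delta$ by Stokes. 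As $\delta\to0$, the subleading terms decay and the factor $x^{-2c}$ in $*_c$ exactly cancels the power of $x$ produced by the fibred cusp star on a fibre form of the critical degree $f/2-c$, so the boundary integral converges to a nonzero multiple of $\langle\mathrm{bd}_S(\omega),\mathrm{bd}_T(\eta)\rangle$ in the $L^2$ metric of $Y$; this therefore vanishes, and that metric agrees with the one on $\calH^*(B,\calH^{f/2-c}(F))\cong H^*(B,H^{f/2-c}(F))$. Finally, for $c=0$ the space $\calH^*(B,\calH^{f/2}(F))$ carries the nondegenerate pairing induced by fibrewise Poincaré duality on $\calH^{f/2}(F)$ followed by Poincaré duality on $B$, which is the intersection pairing of $Y$ restricted to the corresponding subquotient of $H^*(Y)$. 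Since $*=*_0$ interchanges $dS_0$ and $\delta_{fc,0}T_0$ by \eqref{poincare} and, on leading terms, intertwines $\mathrm{bd}_S,\mathrm{bd}_T$ with the Hodge star of $Y$, the intersection pairing of two elements of $\mathrm{Im}\,\mathrm{bd}_S$ equals, up to sign, the $L^2$ inner product of an element of $\mathrm{Im}\,\mathrm{bd}_S$ with one of $\mathrm{Im}\,\mathrm{bd}_T$, hence vanishes by the orthogonality just established; likewise for $\mathrm{Im}\,\mathrm{bd}_T$. Two complementary isotropic subspaces of a nondegenerately paired space are in perfect duality with each other, so each has dimension half the total and is therefore maximal isotropic, i.e. Lagrangian. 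The main obstacle is the compatibility asserted in the third paragraph---that the analytically defined $\mathrm{bd}_T$ (and the section of $\partial$ underlying $\mathrm{bd}_S$) is intertwined by the isomorphisms of Theorem~\ref{les} with the topological restriction and connecting maps---which requires tracking leading asymptotics through the explicit construction of those isomorphisms; the remaining ingredients are either the soft algebra of $*_c$ and the exact sequence, or routine boundary-term estimates.
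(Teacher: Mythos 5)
Your proposal is correct and follows the paper's strategy in all essentials: duality via the algebra of $*_c$ and the identity $\delta_{fc,c}=\pm *_{-c}d*_c$; boundary maps defined as the leading asymptotic coefficients (with and without the $\frac{dx}{x}$ factor) of elements of $dS_c$ and $\delta_{fc,c}T_c$; injectivity from the directness of the sum in Theorem~\ref{exthodge}; the dimension count from the exactness of the sequence in Theorem~\ref{les}; and the $c=0$ Lagrangian statement deduced from orthogonality plus the fact that $*_Y$ intertwines the two images and converts the intersection pairing into the $L^2$ pairing. The one place where your route genuinely differs is the orthogonality step: you compute $\int_{\{x=\delta\}}\eta\wedge *_c\omega=0$ directly by Stokes on the truncations $M_\delta$ and pass to the limit, whereas the paper first restricts $d+\delta_{fc,c}$ to fibre-harmonic forms to obtain a self-adjoint b-operator $P_c$, invokes Melrose's boundary pairing $iB(u,v)=\langle u_1,v_2\rangle_Y-\langle u_2,v_1\rangle_Y$ on the formal null space $F(P_c,\tfrac{f}{2})$, and proves the stronger statement that the \emph{entire} image of the boundary-data map $BD$ is Lagrangian for $iB$, from which the orthogonality of the two sub-images is read off. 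The two computations are the same integration by parts; your version is more elementary and suffices for the theorem as stated, while the paper's buys the structurally richer fact that ${\rm Im}(BD)$ is a Lagrangian in $\left[\calH^*(B,\calH^{(f/2)-c}(F))\right]^2$, which is what feeds the discussion of mezzo-perversities and Cheeger spaces in the introduction. The ``main obstacle'' you flag --- that ${\rm bd_T}$ is intertwined with the topological restriction map $r$ under the isomorphisms of Theorem~\ref{les} --- is indeed the point that needs care, but it is already established in the paper's proof of Theorem~\ref{les} (where $r=\lim_{s\to 0}\Pi_{p-1,p}\circ i_s^*$ is shown to pick out exactly the leading coefficient on extended harmonic representatives), so citing it is legitimate rather than a gap.
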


The results in this paper are interesting in their own right, but are also interesting in their relationship to 
other recent work about cohomology and harmonic forms on pseudomanifolds.  The Lagrangian subspace in Theorem \ref{1.6} is analogous to the structure
that permits the definition of the Atiyah-Patodi-Singer boundary condition for the Hodge Laplacian on manifolds with 
boundary.  The Lagrangian subspace defined by the boundary data of extended harmonic forms
in the fibred cusp setting should analogously relate to self-adjoint boundary conditions for the Hodge Laplacian
on a manifold with an edge metric.  It would be interesting to understand how both the APS boundary 
conditions and the conditions coming from Theorem \ref{1.6} relate to the abstract self-adjoint extensions of the Hodge
Laplacian for open, orientable incomplete manifolds identified by Bei in \cite{Bei}.

Additionally, it would be interesting to understand how both Bei's work and the splitting
arising in Theorem \ref{1.6} relate to the refinement of intersection cohomology in the
case of spaces that are not Witt, but possess a particular Lagrangian structure on the bundle of middle
degree link cohomology groups over the singular strata.  Such spaces, sometimes referred to as Cheeger
spaces, and the resulting ``mezzo perversity" intersection cohomology groups that may be defined
on them have been studied by Cheeger in \cite{Ch}, Banagl in \cite{Ba-lag} and by Albin et al in \cite{ALMP1}.
It would be interesting to understand when the Lagrangian structure
given by Theorem \ref{1.6} comes from a Lagrangian structure on the $H^{f/2}(F)$ bundle over $B$, to obtain a 
better understanding of the relationship between extended harmonic
forms for fibred cusp metrics and the associated mezzo perversity cohomology group $IH^*_\mathcal{L}(X)$.
This would also clarify the relationship between the signature defined on Cheeger spaces and the middle
perversity perverse signature studied in \cite{Hu2} and \cite{FH}.

Finally, extended weighted $L^2$ harmonic forms for fibred cusp metrics are equivalent to extended weighted $L^2$ harmonic
forms for the conformally equivalent fibred boundary metrics, but with a shift in weight.  In \cite{BH}, the author and Banagl
use this relationship to prove a Hodge theorem for HI cohomology, defined by Banagl in \cite{Ba-HI}.

The author would like to acknowledge the help of two anonymous reviewers in suggesting improvements to this paper that clarify some points and improve readability.

%%%%%%%%%%%%%%%%%%%%%%%%%%%%%%%%%%%%%%%%%%%%%%%

\section{Background}

As with absolute or relative cohomology, there are many possible complexes of forms that can be used to calculate
$IH^j_p(X,B)$.  One useful complex comes from the weighted $L^2$ forms over $M$ with respect to a fibred cusp metric.
This lemma is proved in \cite{HHM}:
\begin{lemma}\label{wcoho} 
Let $WH^*_{fc}(M,c)$ be the cohomology of the chain complex 
\[
WC^*_{fc}(M,c):=\{\omega \in x^{c}L^2\Omega^*(M,g_{fc}) \, \mid \, d\omega \in x^{c}L^2\Omega^*(M,g_{fc}) \}.
\]
Then if $H^{\frac{f}{2}-c}(F)=0$, 
\[
WH^*_{fc}(M, c) \cong IH^*_{\frac{f}{2}-c}(X,B).
\]
\end{lemma}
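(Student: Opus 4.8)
The plan is to identify the weighted $L^2$ de Rham complex sheaf-theoretically with the intersection complex on $X$ of the perversity determined by $c$. First I would sheafify $WC^*_{fc}(M,c)$: for open $U\subseteq X$ let $\mathcal{WC}^k(U)$ be the forms in $x^{c}L^2_{\mathrm{loc}}\Omega^k(U\cap M,g_{fc})$ whose differential is again in $x^{c}L^2_{\mathrm{loc}}$, the weight and integrability being imposed only near $B$ (away from $B$, where $g_{fc}$ is an ordinary smooth complete metric, this is just the complex of locally $L^2$ forms with locally $L^2$ differential, quasi-isomorphic to $\underline{\mathbb{R}}_M$). Because $B$ is compact and collared in $X$, the global sections of the resulting sheaf complex are exactly $WC^*_{fc}(M,c)$, and since each $\mathcal{WC}^k$ is a module over the fine sheaf of smooth functions on $X$ it is fine, hence soft and acyclic; so $WH^*_{fc}(M,c)=\mathbb{H}^*(X,\mathcal{WC}^*)$. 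By Deligne's construction (the stratification of $X$ has the single singular stratum $B$) it therefore suffices to show that $\mathcal{WC}^*$ restricts to $\underline{\mathbb{R}}_M$ on $M$ --- which is clear --- and that the stalk cohomology of $\mathcal{WC}^*$ along $B$, together with its restriction maps to the link cohomology of $M$, agrees with the weighted-cone Poincar\'e lemma $IH^j_{p}(C(F))$ for $p=\frac{f}{2}-c$; this identifies $\mathcal{WC}^*$ up to quasi-isomorphism with the complex computing $IH^*_{f/2-c}(X,B)$ in the extended sense of (\ref{eq:extih}) (in particular, when $p\le 0$ the stalks vanish and one recovers the relative complex with $\mathbb{H}^*=H^*(X,B)$, while when $p>f$ one recovers $Rj_*\underline{\mathbb{R}}_M$ and hence $H^*(X-B)$, $j:M\hookrightarrow X$ denoting the open inclusion).

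The heart of the argument is thus the local computation. A cofinal family of neighborhoods of $b\in B$ in $X$ has the form $\mathbb{D}^{\dim B}\times C(F)$, with regular part $\mathbb{D}^{\dim B}\times(0,\epsilon)_x\times F$ carrying, up to an admissible perturbation that by \cite{GH2} changes none of the analysis, the model metric $\frac{dx^2}{x^2}+g_{\mathrm{eucl}}+x^2 g_F$. A Künneth reduction in the contractible $\mathbb{D}^{\dim B}$ factor leaves the weighted $L^2$ de Rham cohomology of the model fibred cusp end $(0,\epsilon)_x\times F$ with metric $\frac{dx^2}{x^2}+x^2 g_F$ and weight $x^{c}$. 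Here the fibre metric $x^2 g_F$ collapses as $x\to 0$, so in a spectral decomposition over $F$ the nonzero modes of the fibre Laplacian are driven to $+\infty$ and carry no weighted $L^2$ cohomology; only the fibrewise-harmonic modes survive, and for a harmonic $k$-form on $F$ the problem reduces to the weighted one-dimensional de Rham complex on $(0,\epsilon)_x$ with weight $x^{(f/2)-k-c}$ --- this exponent being produced jointly by the $x^2 g_F$-rescaling of $k$-forms, the factor $x^{f}$ in the fibre volume form, and the ambient weight $x^{c}$. Solving $d$ on the half-line then shows that a harmonic $k$-form on $F$ contributes a weighted $L^2$ cohomology class exactly when $k<\frac{f}{2}-c=p$, namely $H^k(F)$ in degree $k$, restricting to $M$ as the corresponding link class, and contributes nothing in degrees $>p$ --- except in the borderline degree $k=p$ (possible only when $p\in\mathbb{Z}$), where the $\frac{dx}{x}$-components of harmonic $p$-forms give, in degree $p+1$, a space $\cong\mathcal{H}^{p}(F)$ of closed maximal-domain cochains that need not be exact. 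The hypothesis $H^{f/2-c}(F)=0$ is precisely what removes this critical-weight contribution, so that the stalk cohomology along $B$ is the clean $H^j(F)$ for $j<p$ and $0$ for $j\ge p$; without it, one obtains only the image of a map of intersection cohomology groups, as in \cite{HHM}.

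With the stalk computation in hand, Deligne's criterion gives a quasi-isomorphism of $\mathcal{WC}^*$ with the Deligne complex computing $IH^*_{f/2-c}(X,B)$, and passing to hypercohomology yields $WH^*_{fc}(M,c)\cong IH^*_{f/2-c}(X,B)$, interpreted through (\ref{eq:extih}); the non-integer case $p\notin\mathbb{Z}$ is covered automatically, the hypothesis then being vacuous and the relevant perversity that of $\lfloor p\rfloor$. The main obstacle is the local model computation: one must import from \cite{GH2} that the fibred-cusp Hodge--de Rham operator is Fredholm on the relevant $x^{c}$-weighted Sobolev spaces, that $c$ is a noncritical weight exactly when $H^{f/2-c}(F)=0$ (so that $d$ has closed range and reduced and unreduced weighted $L^2$ cohomology coincide), and that admissible perturbations preserve all of this; only with that analytic input does the separation-of-variables picture above become rigorous and yield the required identification of stalks. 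By comparison, the sheafification and the passage through Deligne's construction are routine.
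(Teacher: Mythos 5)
Your proposal is correct and follows essentially the same route as the paper, which does not prove this lemma in the text but cites \cite{HHM}, where the argument is exactly the sheaf-theoretic one you describe: sheafify the weighted $L^2$ complex, verify fineness, and check the weighted-cone Poincar\'e lemma by separation of variables on the model end, with the hypothesis $H^{f/2-c}(F)=0$ serving to exclude the critical indicial weight. The only caveat is a small imprecision in your description of the borderline degree (at the critical weight the failure is that $d$ does not have closed range, so the unreduced stalk cohomology can be infinite-dimensional rather than an extra copy of $\mathcal{H}^p(F)$), but this does not affect the argument since that case is excluded by hypothesis.
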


Note that when $H^{\frac{f}{2}-c}(F) \neq 0$, we get
\[
WH^*_{fc}(M, c-\epsilon) \cong IH^*_{\frac{f}{2}-c}(X,B)
\]
and
\[
WH^*_{fc}(M, c+\epsilon) \cong IH^*_{\frac{f}{2}-c-1}(X,B).
\]
Further, there is a natural inclusion of complexes $WC^*_{fc}(M,c+\epsilon) \subset WC^*_{fc}(M,c-\epsilon)$
that induces a natural map on cohomology $IH^*_{p-1}(X,B) \to IH^*_p(X,B)$ for any $p$.

We can choose to replace the smooth $x^cL^2$ forms here with conormal forms in $x^cL^2$.  Specifically,
a form $\omega \in x^cL^2\Omega^*(M,g_{fc})$ is conormal if $P\omega \in x^cL^2\Omega^*(M,g_{fc})$
for any differential operator of the form $P = (x\partial_x)^aP_Y$, where $P_Y$ is any differential operator on $Y$.
The complex of conormal $x^cL^2$ forms over $M$ is:
\[
x^{c}L^2\Omega_{con}^*(M,g_{fc}):= \{ \omega \in x^{c}L^2\Omega^*(M,g_{fc}) \mid \hspace{1in}
\]
\[
\hspace{1in} d_M \omega \in x^{c}L^2\Omega^*(M,g_{fc})
\mbox{ and } P\omega \in x^{c}L^2\Omega^*(M,g_{fc}) \, \forall P \mbox{ as above}\}.
\]
The proof that this complex calculates intersection cohomology is exactly the same as the proof for the smooth complex of 
$x^cL^2$ forms (see eg \cite{HHM}), through checking
that the Poincar\'e lemma for intersection cohomology is satisfied.
This is the complex we will use for the proof of Theorem \ref{l2hodge}.

The proofs in this paper are based on analytic results from \cite{GHAalto}.
This paper reproves and extends some the results in \cite{Va} that are referred to and 
used in \cite{HHM} to prove related Hodge theorems.   The results use the theory of the b-calculus \cite{Me-aps}
and the phi-calculus \cite{MaMe}, and are stated in terms of ``split" sobolev spaces and
split elliptic operators.  For the technical details of these defintions we refer the reader to 
\cite{GHAalto}, but we can roughly explain the ideas as follows.  
First of all, we have to 
complete the space 
$x^{a} L^2\Omega^*(M, g_{fc})$ using the inner product $\langle,\rangle_{fc,a}$.  Call the resulting Hilbert space
$x^{a} L^2(M, g_{fc},\Lambda^*)$.
The ``splitting" in the split sobolev spaces and operators
relates to a decomposition of the space of forms near the
the boundary into fibre harmonic forms
over the base of the boundary fibration (which we think of as forms over $B \times (0,1)$ with values
in a flat bundle, $\mathcal{K}_*$), and the orthogonal complement
of that subspace of forms.   The split sobolev spaces require b-type regularity in the 
``fibre harmonic" components and phi-type regularity in the orthogonal complement.  
 We note that for any metric $g$ on $M$,
\[
x H^1_{\phi}\Omega^*(M, g) \subset x H^1_{split}\Omega^*(M, g) \subset H^1_b \Omega^*(M, g)\subset L^2(M, g,\Lambda^*).\]
This is Equation (6.1) in \cite{GHAalto}.  Note in particular that the $\Omega$ occurring in the first three spaces here is an abuse of notation, and does not indicate smoothness on the interior.

The first analytic result we need based on \cite{GHAalto} is a Fredholm theorem.  
\begin{thm}
For all but a discrete set of $a \in \mathbb{R}$, the map
\[
d+\delta_{fc,c}: x^{a+1} H^1_{split}\Omega^*(M, g_{fc}) \to x^{a} L^2(M, g_{fc},\Lambda^*)
\]
is Fredholm.  
In particular, for some sufficiently small $\epsilon>0$, the operators
are Fredholm for $a=c \pm \epsilon$.  
 \end{thm}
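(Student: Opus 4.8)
The plan is to recognise $d+\delta_{fc,c}$ as an elliptic first-order operator in the fibred cusp pseudodifferential calculus, and to deduce Fredholmness from the combination of interior ellipticity (which always holds) and invertibility of the normal operator at the given weight $a$ (which holds for $a$ off a discrete set). First I would record the structure of the operator near $B$: in coordinates $(x,y,z)$ in which $g_{fc}=\frac{dx^2}{x^2}+\phi^*ds_B^2+x^2h$, both $d$ and its weighted adjoint $\delta_{fc,c}$ are built from the vector fields $x\partial_x$, $\partial_{y_i}$ and $x^{-1}\partial_{z_j}$ with coefficients smooth in $x$, so that, up to a power of $x$, $d+\delta_{fc,c}$ lies in the fibred cusp differential operators $\mathrm{Diff}^1_{fc}(M)$ and is elliptic there; the weight $c$ enters only through the order-zero conjugation term, a bounded smooth multiple of $\tfrac{dx}{x}\wedge(\cdot)$ and its adjoint. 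Conjugating the map in the statement by $x^a$ reduces it, up to isomorphisms of the Sobolev spaces, to studying $d+\delta_{fc,c}$ between the model spaces $H^1_{split}$ and $L^2$, for an operator whose interior symbol is unchanged and whose normal (indicial) family has been shifted by $a$.

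Next I would invoke the symbolic parametrix construction in the fibred cusp calculus (as in Mazzeo--Melrose, Vaillant \cite{Va}, and \cite{GH1}): interior ellipticity produces a parametrix $Q$ of order $-1$ in the calculus with $QP-I$ and $PQ-I$ in the fibred cusp smoothing ideal. Such residual operators are compact on the weighted split Sobolev spaces precisely when the normal operator of $P=d+\delta_{fc,c}$ is invertible at the weight in question; here the ``split'' (anisotropic) regularity is essential, since the calculus improves decay faster in the fibre directions than in the base directions, and the parametrix maps correctly only between $x^{a+1}H^1_{split}$ and $x^aL^2$. Granting this, $P$ is Fredholm on these spaces as soon as its normal operator is invertible at $a$.

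The heart of the matter is then the normal operator. Applying the Mellin transform in $x$ and decomposing forms on the link bundle $Y$ by the Hodge--de Rham operator along the fibre $F$, the normal operator becomes an entire family $\widehat N(\lambda)$, $\lambda\in\mathbb C$, of operators that are elliptic on the compact base $B$ (equivalently on $Y$) and depend analytically on $\lambda$; it is invertible for $\mathrm{Re}\,\lambda$ in some region, so by the analytic Fredholm theorem it is invertible for all $\lambda$ outside a discrete set, the fibred cusp boundary spectrum. Under the correspondence between the Mellin variable and the weight $a$ (a line $\mathrm{Re}\,\lambda=\text{const}$ corresponding to a single weight after the conjugation above), this shows $P$ is Fredholm on $x^{a+1}H^1_{split}\to x^aL^2$ for all $a$ outside a discrete set. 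For the final clause, discreteness of the bad set means that for small enough $\epsilon>0$ both $a=c+\epsilon$ and $a=c-\epsilon$ avoid it, whether or not $c$ itself is a bad weight, so the two operators are Fredholm.

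The main obstacle I expect is the explicit analysis of the indicial family $\widehat N(\lambda)$: one must verify it is genuinely Fredholm and invertible on some non-empty set of weights (rather than, say, nowhere), which requires unwinding how the cusp direction, the base Laplacian and the fibre cohomology $H^*(F)$ assemble in the model operator, and keeping careful track of the anisotropic split Sobolev orders throughout the parametrix construction so that the residual terms really are compact on the intended mapping spaces. This is precisely the analysis carried out in \cite{GHAalto} and \cite{GH2}.
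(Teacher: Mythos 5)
Your proposal is correct in outline but takes a different route from the paper. The paper does not redo the parametrix construction in the fibred cusp calculus: its entire proof is the identity $d+\delta_{fc,c}=x^{-1}(d+\delta_{fb}+xZ)$ (up to an isomorphism of the $fc$ and $fb$ form bundles), with $Z$ a bounded zeroth order operator, so that $d+\delta_{fb}+xZ$ is an admissible perturbation of the split elliptic fibred boundary operator $d+\delta_{fb}$; Fredholmness on $x^{a}H^1_{split}\to x^{a}L^2$ for all but a discrete set of $a$ is then quoted directly from \cite{GHAalto}, and the prefactor $x^{-1}$ accounts for the shift from $x^{a+1}$ to $x^{a}$ in the statement. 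What the paper's reduction buys is that all the hard analysis is already packaged in the cited ``split elliptic plus admissible perturbation'' theorem; what your approach buys is a self-contained explanation of where the discrete set of bad weights comes from. One caveat on your version: as stated, the criterion ``$P$ is Fredholm as soon as its normal operator is invertible at $a$'' never fires for the Hodge--de Rham operator when $H^*(F)\neq 0$, because the full fibred cusp normal operator contains the fibre Hodge--de Rham operator, whose kernel (the fibre harmonic forms) obstructs invertibility at \emph{every} weight. This is exactly why the split spaces and the split (two-step) parametrix exist: one must first invert on the orthogonal complement of the fibre harmonic forms, and the discrete set of excluded weights arises only from the indicial roots of the reduced b-type operator induced on $\Omega^*(B;\calK)$. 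You gesture at this when you decompose along the fibre Hodge--de Rham operator, but the logical order should be: split off the fibre harmonic part first, then apply the Mellin/analytic-Fredholm argument only to the reduced operator on the base. With that correction your argument reproduces the content of \cite{GHAalto} rather than merely citing it, which is a legitimate (if much longer) alternative to the paper's one-line conformal reduction.
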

 \begin{proof}
The proof of this theorem is exactly the same as the proof when $c=0$, which is Theorem 1 in \cite{GHAalto}, or Proposition 17 in 
\cite{HHM}, where the required indicial root calculation is given.  The only difference between $d+\delta_{fc,0}$ and $d+\delta_{fc,c}$ is a 0th order
term:
by direct calculation, we get that, up to isomorphism of the $fc$ and $fb$ form bundles,
$d+\delta_{fc,c} =  x^{-1}(d+ \delta_{fb} + xZ_c)$, where $Z_c$ is a bounded zeroth order operator
depending on $c$.
This means that $d+ \delta_{fb} + xZ_c$ is a split elliptic operator of degree 1, and thus by Theorem 13 in \cite{GHAalto}, (also used to prove Theorem 1 in that
paper), is Fredholm as a map from $x^aH^1_{split}\Omega^*(M, g_{fc})$ to
$x^{a} L^2(M, g_{fc},\Lambda^*)$ for all but a discrete set of $a$.  The result follows directly.
 \end{proof}
The second result is a regularity theorem.  To state this one, let $M_{x<1/2}:= \{p \in M : x(p) < 1/2 \} \cong Y \times (0,1/2)$.
We will refer to this set as the {\em end} of $M$.

\begin{thm} \label{regfc} Assume $\epsilon>0$ is sufficiently small, and 
let $\eta \in x^{c-\epsilon} L^2\Omega_{con}^*(M, g_{fc})$. Then $(d+\delta_{fc,c})\eta =\xi \in x^{c+\epsilon} L^2\Omega_{con}^*(M, g_{fc})$
implies 
$\eta = \eta_0 + \eta'$, where $\eta' \in x^{c+\epsilon} L^2\Omega_{con}^*(M, g_{fc})$,
and on the end, 
\[
\eta_0 = x^{c-f/2}\left(x^{(f/2)-c}u_{(f/2)-c} + \frac{dx}{x} \wedge x^{(f/2)-c}v_{(f/2)-c} \right), 
\]
where $u_{(f/2)-c}$ and $v_{(f/2)-c}$ are harmonic forms on $B$ with values in $\mathcal{K}_{(f/2)-c}$.  Further, if 
$\eta \in x^{c+\epsilon} L^2\Omega_{con}^*(M, g_{fc})$, then in fact, 
\[\omega \in x^{c+1+\epsilon} H^\infty_{split}\Omega^*(M, g_{fc})
\subset x^{c+\epsilon}H^\infty_\phi\Omega^*(M, g_{fc}).\]
\end{thm}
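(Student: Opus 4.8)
The plan is to read off the behavior of $\eta$ on the end from the indicial (normal) operator of $d+\delta_{fc,c}$, using the Fredholm package just stated together with the structural identity $d+\delta_{fc,c}=x^{-1}(d+\delta_{fb}+xZ)$, and then to bootstrap in the genuinely $L^2$ case. First I would localize to the end $M_{x<1/2}\cong Y\times(0,1/2)$, where, up to the bundle isomorphism identifying the $fc$- and $fb$-form bundles, $x(d+\delta_{fc,c})$ equals $D_{fb}+xZ$ with $D_{fb}:=d+\delta_{fb}$ split elliptic and $xZ$ bounded of order zero, so it suffices to analyse $D_{fb}$ and treat $xZ$ perturbatively. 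Taking the Mellin transform in $x$ replaces $D_{fb}$ by its indicial family, a holomorphic family of elliptic operators on $Y$ whose inverse is meromorphic with poles at the discrete set of indicial roots (the discreteness being exactly what underlies the Fredholm theorem above). Decomposing forms on the end according to the presence of a $\tfrac{dx}{x}$ factor and, via the fibre Laplacian, into the fibre-harmonic part and its orthogonal complement, one checks that on the complement the indicial family is invertible throughout the strip between the weights $c-\epsilon$ and $c+\epsilon$ once $\epsilon$ is small, while on the fibre-harmonic part the unique indicial root in that strip sits at the weight $c$, with indicial kernel spanned by $\phi$-pullbacks of $\mathcal{K}_{(f/2)-c}$-valued forms on $B$, of the two types $u$ and $\tfrac{dx}{x}\wedge v$ appearing in the stated $\eta_0$.

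Next I would run the standard contour-shift parametrix argument. Given $\eta\in x^{c-\epsilon}L^2$ with $(d+\delta_{fc,c})\eta\in x^{c+\epsilon}L^2$, split elliptic regularity puts $\eta$ in a weighted split Sobolev space and makes it conormal on the end; inverting the Mellin-transformed equation and moving the inverse-Mellin contour from $\operatorname{Re}\zeta$ at $c-\epsilon$ to $c+\epsilon$ picks up exactly the residue at $\zeta=c$, which contributes a term in the indicial kernel, i.e.\ a term of the form $\eta_0$, and leaves a remainder that is $O(x^{c+\epsilon})$ in $L^2$; hence $\eta=\eta_0+\eta'$ with $\eta'\in x^{c+\epsilon}L^2$. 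That the coefficients $u_{(f/2)-c}$ and $v_{(f/2)-c}$ are \emph{harmonic} forms on $B$, rather than arbitrary $\mathcal{K}_{(f/2)-c}$-valued sections, I would extract by feeding $\eta_0$ back in: on an indicial-kernel section the next term of the normal operator of $d+\delta_{fc,c}$ is, modulo lower order, the Hodge--de Rham operator of $B$ acting with values in $\mathcal{K}_{(f/2)-c}$, and since $(d+\delta_{fc,c})\eta$ and $(d+\delta_{fc,c})\eta'$ both lie in $x^{c+\epsilon}L^2$ so must $(d+\delta_{fc,c})\eta_0$, which forces $u_{(f/2)-c}$ and $v_{(f/2)-c}$ into the kernel of that twisted de Rham operator.

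For the refinement, if $\omega\in\mathcal{H}^*_{L^2}(M,g_{fc},c)$ then $\omega\in x^cL^2\subset x^{c-\epsilon}L^2$ and $(d+\delta_{fc,c})\omega=0\in x^{c+\epsilon}L^2$, so the decomposition above applies and gives $\omega=\eta_0+\eta'$; but a direct volume computation on the end, using the $x^2h$ scaling of the fibre metric, shows that the weight $c$ is exactly critical for such a term, so $\eta_0\notin x^cL^2$ unless $\eta_0=0$, whence $\omega=\eta'\in x^{c+\epsilon}L^2$. Iterating the same argument, there being no indicial root of $D_{fb}$ strictly between the weights $c$ and $c+1$ and a genuine $x^cL^2$ solution being unable to carry the indicial-kernel term sitting at $c+1$, improves the weight to $x^{c+1+\epsilon}$; repeatedly applying the split-elliptic parametrix after commuting through operators of the form $(x\partial_x)^aP_Y$ then upgrades $L^2$ to $H^\infty_{split}$, giving $\omega\in x^{c+1+\epsilon}H^\infty_{split}\Omega^*(M,g_{fc})$, and the final inclusion into $x^{c+\epsilon}H^\infty_\phi\Omega^*(M,g_{fc})$ is the comparison of the split and $\phi$ vector-field algebras recorded in \cite{GHAalto}.

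I expect the main obstacle to be the harmonicity step: correctly computing the subleading term of the normal operator of $d+\delta_{fc,c}$ on the end, keeping precise track of the $fc$/$fb$ bundle isomorphism and the $x^2h$ fibre scaling, and verifying that it is exactly the $\mathcal{K}_{(f/2)-c}$-twisted Hodge--de Rham operator of $B$ with no extra zeroth-order term that could shift the relevant kernel. A secondary technical point is making the contour shifts rigorous in the weighted Sobolev rather than pointwise category, which is precisely where the split-elliptic Fredholm theorem and its parametrix from \cite{GHAalto} and \cite{GH2}, rather than naive separation of variables, do the real work.
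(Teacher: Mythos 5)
Your proposal is correct and follows essentially the same route as the paper: the paper likewise reduces the statement to the split-operator regularity/parametrix results of \cite{GH2} and \cite{GHAalto} together with an explicit indicial-root computation for the b-operator obtained in the splitting, finding that applying the model operator to the ansatz $x^{c-f/2}\sum_i (x^i u_i + \frac{dx}{x}\wedge x^i v_i)$ kills every term except $i=(f/2)-c$ with $u,v$ harmonic. Your write-up merely makes explicit the Mellin-transform/contour-shift mechanics and splits the harmonicity step into ``locate the indicial kernel, then feed back the subleading term,'' which is the same calculation the paper performs in one pass.
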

\begin{proof}
This generalisation of Theorem 14 from \cite{GHAalto} (which is for the case $c=0$) follows by an identical argument to the proof of that theorem, from the parametrix result for split operators from \cite{GHAalto} (Theorem 12).  It is proved by applying the left parametrix obtained to both sides of the equation $(d+\delta_{fc,c})\eta= \xi$, and from the indicial root calculation for the b-operator obtained in the splitting.  Because the indicial roots here are all simple, the indicial root calculation 
comes down to determining the kernel of $d+\delta_{fc,c}$ (where this is the operator for the product type fibred cusp metric 
that $g_{fc}$ tends to as $x \to 0$) among forms on the cylinder $Y \times [0,1)$ of the form
\[
\eta_0 = x^{c-f/2}\sum_{i=0}^f\left(x^{i}u_{i} + \frac{dx}{x} \wedge x^{i}v_{i} \right),
\]
where $u_{i},v_{i}$ are fixed forms in $\Omega^*(B,\calH^{i}(F))$.
Applying the operator and combining terms with the same power in $x$,
we find that the only solutions are the ones where $i=(f/2)-c$ and 
where $u_{(f/2)-c},v_{(f/2)-c}$ are harmonic.
For more details of the indicial root calculations in the case $c=0$, see also \cite{HHM}.
\end{proof}

If we restrict our consideration to the case when we can use a geometrically flat fibred cusp metric on $M$ (recall there is a topological obstruction to the existence of such a metric),
then any harmonic form on $B$ with values in $\mathcal{K}$ is in fact harmonic on $Y$
\cite{Ba2}.  
We need this restriction in order to get Fredholm and regularity results for 
$\Delta_{fc,c}= (d+\delta_{fc,c})^2$ rather than only for $d+\delta_{fc,c}$.  These theorems and their proofs
are similar to the ones above, so we supress the proofs.  We note, however, that in this geometrically restricted setting they hold for any degree of split Sobolev regularity.  

\begin{thm}\label{fredfclap}
Assume that the metric on $Y$ is geometrically flat.  For all but a discrete set of $a \in \mathbb{R}$, the map
\[
\Delta_{fc,c}: x^{a+2} H^{k+2}_{split}\Omega^*(M, g_{fc}) \to x^{a} H^{k}_{split}\Omega^*(M, g_{fc})
\]
is Fredholm.  
In particular, for some sufficiently small $\epsilon>0$, the operators
are Fredholm for $a=c \pm \epsilon$.  
 \end{thm}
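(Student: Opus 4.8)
The plan is to run, \emph{mutatis mutandis}, the argument already used for the Fredholm theorem for $d+\delta_{fc,c}$, upgraded from first to second order and from $H^1_{split}$ to arbitrary split Sobolev regularity. First I would conjugate $\Delta_{fc,c}$ to a model operator near the end. Starting from the identity $d+\delta_{fc,c}=x^{-1}(d+\delta_{fb}+xZ)$ with $Z$ bounded of order zero, a direct calculation — commuting the factor $x^{-1}$ through $d+\delta_{fb}+xZ$ and collecting the terms that acquire a compensating power of $x$ — gives, up to the fixed isomorphism of the $fc$ and $fb$ form bundles,
\[
\Delta_{fc,c}=(d+\delta_{fc,c})^2=x^{-2}\bigl(\Delta_{fb}+xW\bigr),
\]
where $\Delta_{fb}=(d+\delta_{fb})^2$ is the fibred boundary Laplacian and $W$ is a first-order operator with bounded coefficients. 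Thus $\Delta_{fb}+xW$ is an admissible perturbation of $\Delta_{fb}$ in the sense of \cite{GH2}.

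Next I would invoke the Fredholm package for split-elliptic operators from \cite{GH2} and \cite{GHAalto}. This is where the geometric flatness hypothesis enters: by \cite{Ba2}, when the metric on $Y$ is geometrically flat the bundle $\mathcal{K}$ of fibrewise harmonic forms is flat over $B$ and its sections are genuinely harmonic on $Y$, which is exactly what is needed for $\Delta_{fb}$ to be split-elliptic and for the normal (indicial) family of the $b$-operator obtained in the splitting to decouple along the pieces $\mathcal{K}_j$. The split-elliptic Fredholm theorem then applies to the admissible perturbation $\Delta_{fb}+xW$, showing it is Fredholm from $x^{a+2}H^{k+2}_{split}\Omega^*(M,g_{fc})$ to $x^a H^k_{split}\Omega^*(M,g_{fc})$ for every $k$ and all $a$ outside the set $\mathcal{S}\subset\mathbb{R}$ of indicial roots; conjugating back by $x^{-2}$ transfers this to $\Delta_{fc,c}$ with the same exceptional set. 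Finally, since by the indicial root computation already carried out in the proof of Theorem~\ref{regfc} the elements of $\mathcal{S}$ are simple and isolated, $\mathcal{S}$ is discrete; the weight $c$ may itself lie in $\mathcal{S}$ (precisely when the relevant link cohomology is nonzero, cf.\ Lemma~\ref{wcoho}), but for all sufficiently small $\epsilon>0$ the weights $a=c\pm\epsilon$ avoid $\mathcal{S}$, which is the assertion.

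I expect the main obstacle to be bookkeeping rather than conceptual, and it sits in the second step: one must verify that the perturbation $xW$ really is \emph{admissible} in the split calculus (not merely a bounded correction), and that the input from \cite{Ba2} genuinely promotes $\Delta_{fb}$ to a split-elliptic operator at all Sobolev orders, so that the full elliptic regularity and Fredholm machinery of \cite{GH2} — needed only at first order for $d+\delta_{fc,c}$ — can be run at arbitrary order $k$. Once those points are in place, everything else is parallel to the statement for $d+\delta_{fc,c}$, which is why the proof can reasonably be suppressed.
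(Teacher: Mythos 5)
Your proposal is correct and follows essentially the same route the paper intends: the paper explicitly suppresses this proof as being "similar" to the first-order Fredholm theorem, namely conjugating to an admissible perturbation of a split-elliptic operator ($x^{-2}(\Delta_{fb}+\cdots)$ in place of $x^{-1}(d+\delta_{fb}+xZ)$), invoking the Fredholm machinery of \cite{GH2}/\cite{GHAalto} at arbitrary split Sobolev order, and using geometric flatness (via \cite{Ba2}) to make the Laplacian tractable. You also correctly flag the one point that genuinely needs checking, the admissibility of the perturbation term arising from commuting the powers of $x$ through the square.
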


\begin{thm} \label{regfclap} Assume $\epsilon>0$ is sufficiently small, and 
let $\eta \in x^{c+k-\epsilon} L^2\Omega_{con}^*(M, g_{fc})$. Then $\Delta_{fc,c} \eta \in x^{c+k+\epsilon} H^k_{split}\Omega_{con}^*(M, g_{fc})$
implies 
\[\eta = \eta_0 + \eta', \qquad \eta' \in x^{c+k+2+\epsilon} H^{k+2}_{split}\Omega_{con}^*(M, g_{fc}),
\]
and on the end, 
\[
\eta_0 = \left(u_{11}\log x + u_{21}+ \frac{dx}{x} \wedge (u_{12}\log x + u_{22}) \right), 
\]
where $u_{ij}$ are harmonic forms on $Y$ fibre degree $(f/2)-c$.  

%If $(d+\delta_{fc,c})^3\omega = 0$, then in fact, $\omega \in x^{c+2+\epsilon} H_{split}^2\Omega^*(M, g_{fc})\subset x^{c+\epsilon}L^2\Omega^*(M, g_{fc}).$
\end{thm}

In this restricted setting, the parametrices for $d+\delta_{fc,c}$ and $\Delta_{fc,c}$ can in fact be taken to be the first diagonal
parametrix constructed in \cite{GHAalto}, Section 5.2 .  Near the boundary, this is the lift of the b-parametrix near the boundary for the 
Gauss Bonnet operator (resp., Laplacian) on $B\times (0,1)$ with values in $\calK$ (acting on the fibre harmonic
part of forms) with the
inverse of $d+\delta_{fc,c}$ (resp., $\Delta_{fc,c}$) acting on the orthogonal complement of the fibre harmonic
forms.  

We not that we can obtain the same Fredholm and regularity results for somewhat more general metrics than strict geometrically flat product 
type fibred cusp metrics, which we call {\em admissible perturbations}.  An admissible perturbation is a metric which differs from a reference
metric by a linear combination of symmetric products of the forms $\frac{dx}{x}$, $d_{y_i}$, and $xd_{z_j}$ for 
base coordinates $y_i$ and fibre coordinates $z_j$, where the coefficients vanish like $x^2$ as $x\to 0$.
If we follow the description of the structure of the Laplacian from Section 3 of \cite{GHAalto}, we see that such 
a perturbation in the metric results in perturbations of the resulting Laplacian by terms that vanish to the order
$x^{2aq}$, where $q$ is the order of the term in the perturbation.  Thus, by the definition of $\Pi$-split in section 5.1 of that paper, the Laplacian for the perturbed metric is still $\Pi$-split, thus the same Fredholm and regularity results hold as for the original product type and geometrically flat metric.

We also need a lemma about extensions of operators to weighted $L^2$ spaces.  Let $\delta_{fc,c}$ denote the formal adjoint of $d$ with respect to the natural metric on $x^cL^2\Omega^*(M, g_{fc})$.  Consider $d$, $\delta_{fc,c}$ and $\Delta_{fc,c}:=(d+\delta)_{fc,c}^2$ as unbounded operators on the Hilbert space $x^cL^2(M,g_{fc},\Lambda^*)$ with the pairing $<,>_{fc,c}$.  Recall that the domain of the maximal extension of an unbounded operator $P$ on $x^cL^2(M,g_{fc},\Lambda^*)$ with formal adjoint $P^*$ is defined by:

\begin{eqnarray*}
{\mathcal D}(P_{\max}) &=& \{\omega \in x^cL^2(M,g_{fc},\Lambda^*): P\omega \in x^cL^2(M,g_{fc},\Lambda^*)\} \\
& = & \{\omega \in x^cL^2(M,g_{fc},\Lambda^*): \exists \, \eta \in x^cL^2(M,g_{fc},\Lambda^{*})   \\
&\ &\quad \mbox{s.t.}\ \langle \omega, P^* \zeta\rangle = \langle \eta, 
\zeta\rangle \ \forall \zeta \in {\mathcal C}^\infty_0\Omega^*(M)\}.
\end{eqnarray*}
In other words, ${\mathcal D}(P_{\max})$ is the largest set of forms $\omega$ in $x^cL^2(M,g_{fc},\Lambda^*)$
such that $P\omega$, computed distributionally, is also in $x^cL^2$.

The minimal extension $P_{\min}$ is given by the graph closure of $P$ 
on ${\mathcal C}^\infty_0\Omega^*(M,\Lambda^*)$, i.e.
\begin{eqnarray*}
{\mathcal D}(d_{\min}) = \{\omega \in x^cL^2(M,g_{fc},\Lambda^*): \exists\, \omega_j \in
{\mathcal C}^\infty_0\Omega^*(M),\quad \omega_j \to \omega \ \mbox{in}\ x^cL^2\} \\
\mbox{and}\ P\omega_j \ \mbox{also converges to some}\ \eta \in
x^cL^2(M,g_{fc},\Lambda^*)\},
\end{eqnarray*}
in which case $P_{\min}\omega = \eta$. In particular, if the minimal and maximal extensions of $P$ coincide, then $P$ has a unique closed extension.  Further, if $P=P^*$ has a unique closed extension, then we say it is essentially self adjoint.  We then have the following.

\begin{lemma} 
The maximal and minimal closed extensions coincide for each of $d$, $\delta_{fc,c}$, $(d+\delta)_{fc,c}$ and $\Delta_{fc,c}:=(d+\delta)_{fc,c}^2$.  In particular, $(d+\delta)_{fc,c}$ and $\Delta_{fc,c}:=(d+\delta)_{fc,c}^2$ extend to self-adjoint operators
with respect to the pairing $<,>_{fc,c}$.
\end{lemma}
\begin{proof}
The essential self-adjointness of $\Delta_{fc,c}$ follows from Lemma 3.9 in \cite{BLhc} by considering the bundle 
$E$ to be the standard form bundle $\Omega^*(M)$ tensored with a trivial line bundle whose parallel sections under the 
standard flat connection have norm $x^{-c}$.  The other results then 
follow from this one by Lemma 3.8 in the same paper.
\end{proof}
We will use the same notation for the unique extensions of these operators as for the original unbounded operators.
Note that this lemma implies that if $d\eta \in x^cL^2$ and $\delta\tau \in x^cL^2$, then by definition, $\eta$ and $\tau$
are in the domains of $d$ and $\delta_{fc,c}$, respectively, and thus $\langle d\eta, \tau\rangle_{fc,c} = \langle \eta, \delta_{fc,c}\tau\rangle_{fc,c}$.
That is, there will be no boundary term entering in the integration by parts.  Similarly, if $(d+\delta)_{fc,c} \eta$
and $(d+\delta)_{fc,c} \tau$ are both in $x^cL^2$, then there is no boundary term when we integrate by parts
to shift the operator from one term to the other.
We can extend this a bit further by conjugating any of these operators by $x^a$.  Let $T$ be one of these
operators and assume $T\eta \in x^{c+a}L^2$ and $T^*\tau \in x^{c-a}L^2$.  Then $x^{-a}\eta$ is in the domain of 
$x^{-a}Tx^{a}$ extended to $x^cL^2$ and $x^a\tau$ is in the domain of its adjoint, $x^{a}T^*x^{-a}$.  Thus
\begin{equation}\label{extpairing}
\langle T\eta, \tau \rangle_{fc,c} = \langle x^{a}x^{-a}Tx^{a}x^{-a}\eta, \tau \rangle_{fc,c}
\end{equation}
\[
=\langle x^{-a}Tx^{a}x^{-a}\eta, x^{a}\tau \rangle_{fc,c}= \langle x^{-a}\eta, x^{a}T^*x^{-a}x^{a}\tau \rangle_{fc,c}
=\langle \eta,  T^*\tau \rangle_{fc,c}.
\]
We will refer to this as the extended $L^2$ pairing.

%%%%%%%%%%%%%%%%%%%%%%%%%%%%%%

\section{Proof of Theorem \ref{l2hodge}}
For the first part of this theorem, we can let $g_{fc}$ be any fibred cusp metric at all.  This is because
fibred cusp metrics are all quasi-isometric, the space of $L^2$ harmonic forms is isomorphic to 
reduced $L^2$ cohomology, and reduced $L^2$ cohomology is a quasi-isometry invariant.  Thus
in this case, if we prove the theorem for product type fibred cusp metrics, we have up to isomorphism
proved it for any fibred cusp metric.
So assume that $g_{fc}$ is a product type fibred cusp metric.  By Lemma \ref{wcoho}, it suffices to show
\[\mathcal{H}_{L^2}^*(M,g_{fc},c) \cong \rm{Im}(WH^*(M,g_{fc},c+\epsilon) \to WH^*(M,g_{fc},c-\epsilon)).\]

In particular, we need to show that
\[\mathcal{H}_{L^2}^*(M,g_{fc},c) \cong \frac{{\rm Ker}(d) \subset x^{c+\epsilon} L^2\Omega^*(M,g_{fc})}{x^{c+\epsilon} L^2\Omega^*(M,g_{fc}) \cap d(x^{c-\epsilon}L^2\Omega^*(M,g_{fc}))}.\]

\paragraph{\bf The map:  }
Assume that $\omega \in \mathcal{H}_{L^2}^*(M,g_{fc},c)$.  Then by the regularity result,
$\omega \in x^{1+c+\epsilon}H^\infty_{split}$.
But then each $\omega_i$ is also in this space, which is contained in the domain of $(d+\delta_{fc,c})$
by the boundedness of split operators on split spaces.
Also, $(d+\delta_{fc,c})^2 \omega=0$, and because this operator preserves degrees, this is true for each 
$\omega_i$.  Thus $\omega_i$ is also in the domain of $(d+\delta_{fc,c})^2$.  But since $\omega_i$ is in the domain
of $d+\delta_{fc,c}$ and $(d+\delta_{fc,c})[(d+\delta_{fc,c})\omega_i)]=0$, this means also that $(d+\delta_{fc,c})\omega_i$
is in the domain of $(d+\delta_{fc,c})$.  Thus we are justified in integrating by parts by the self-adjointness
of $d+\delta_{fc,c}$, and we get
\[
||(d+\delta_{fc,c})\omega_j||^2_{x^cL^2} = \langle (d+\delta_{fc,c})\omega_j,(d+\delta_{fc,c})\omega_j\rangle_{x^cL^2}
=\langle (d+\delta_{fc,c})^2\omega_j,\omega_j \rangle_{x^cL^2} = 0.
\]
Thus also $(d+\delta_{fc,c})\omega_j=0$ for all $j=0, \ldots, n$.  But $d\omega$ is a $j+1$ form and $\delta_{fc,c}$ is a 
$j-1$ form, so they cannot cancel, which means that we have $d\omega_j = \delta_{fc_c} \omega_j=0$, so additionally,
$d\omega = \delta_{fc,c} \omega=0$ as required.

\paragraph{\bf Injectivity:  } 
Assume as before that $\omega \in \mathcal{H}_{L^2}^*(M,g_{fc},c)$ and that $\omega = d\eta$ for
$\eta \in x^{1+c-\epsilon}L^2\Omega^*(M,g_{fc})$.  Then from Part (1), $\delta_{fc,c} \omega=0$, so 
$\omega$ is in the domain of $\delta_{fc,c}$, and since $d\eta = \omega \in x^cL^2\Omega^*(M,g_{fc})$, also $\eta$ is in the domain
of $d$.  Thus we can write
\[
||\omega||^2_{x^c L^2} = <\omega, d\eta>_{x^c L^2} = <\delta_{fc,c}\omega, \eta>_{x^c L^2}=0,
\]
and get $\omega = 0$.  Thus the map is injective.

\paragraph{\bf Surjectivity:  } This comes from a Hodge decomposition theorem: 
\begin{lemma}\label{fchodge}
For any $\alpha \in x^c L^2_{fc}\Omega^*(M,g_{fc})$, there is a unique decomposition
\[
  \alpha = (d+\delta_{fc,c})\eta + \gamma,
  \]
where $\eta$ is in $x^{c+1-\epsilon} H^1_{split}\Omega^*(M,g_{fc})$ and
$\gamma\in \mathcal{H}_{L^2}(M,g_{fc},c)$.  

Further, if $\alpha \in x^{c+\epsilon}L^2_{fc}\Omega_{con}^*(M,g_{fc})$, then it can be uniquely
decomposed as
\[
  \alpha = d\eta_1 +\delta_{fc,c}\eta_2 + \gamma,
  \]
where each summand is independently in $x^{c+\epsilon}L^2_{fc}\Omega_{con}^*(M,g_{fc})$ and the terms
are mutually orthogonal as elements of $x^cL^2_{fc}\Omega^*(M,g_{fc})$.
\end{lemma}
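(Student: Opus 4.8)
The plan is to run the standard Kodaira--Hodge argument of the b/split calculus (as in \cite{Me-aps} and \cite{HHM}), feeding in the Fredholm theorem of \cite{GHAalto} to get a closed range, and the regularity theorems (Theorem \ref{regfc}, Theorem \ref{regfclap}) to control the asymptotics and hence the regularity of the potentials. Throughout I write $D:=(d+\delta_{fc,c})$, which by the Lemma on essential self-adjointness extends to a self-adjoint operator on $x^cL^2\Omega^*(M,g_{fc})$ with $\ker D=\mathcal{H}_{L^2}(M,g_{fc},c)$.

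\emph{First decomposition.} I would first record, from the Fredholm theorem, that $D:x^{a+1}H^1_{split}\Omega^*\to x^aL^2\Omega^*$ has finite-dimensional kernel and closed range of finite codimension for $a=c\pm\epsilon$, and, from Theorem \ref{regfc}, that $\ker D\subset x^{c+1+\epsilon}H^\infty_{split}\Omega^*$, which sits inside $x^{c+\epsilon}L^2$ since $xH^1_{split}\Omega^*\subset L^2\Omega^*$; in particular $\mathcal{H}_{L^2}$ is finite dimensional and $\ker(D;x^{c+\epsilon}L^2)=\mathcal{H}_{L^2}$. Given $\alpha$, I set $\gamma:=\Pi_{\mathcal{H}_{L^2}}\alpha$, the $x^cL^2$-orthogonal projection onto this finite-dimensional space, and $\beta:=\alpha-\gamma$, so that $\beta$ is $x^cL^2$-orthogonal to $\mathcal{H}_{L^2}$. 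Using the extended $L^2$ pairing \eqref{extpairing} together with the formal self-adjointness of $D$, I would identify the cokernel of $D:x^{c+1-\epsilon}H^1_{split}\Omega^*\to x^{c-\epsilon}L^2\Omega^*$ with $\ker(D;x^{c+\epsilon}L^2)=\mathcal{H}_{L^2}$, and observe that $\beta\in x^cL^2\subset x^{c-\epsilon}L^2$, being $x^cL^2$-orthogonal to $\mathcal{H}_{L^2}\subset x^{c+\epsilon}L^2$, lies in the range; hence $D\eta=\beta$ has a solution, which Theorem \ref{regfc} places in the stated split Sobolev space (for general $\alpha\in x^cL^2$ in $x^{c+1-\epsilon}H^1_{split}\Omega^*$, and in $x^{c+1+\epsilon}H^1_{split}\Omega^*$ when $\alpha\in x^{c+\epsilon}L^2$). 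This gives $\alpha=(d+\delta_{fc,c})\eta+\gamma$. For uniqueness: if $(d+\delta_{fc,c})\eta+\gamma=(d+\delta_{fc,c})\eta'+\gamma'$, then $D(\eta-\eta')=\gamma'-\gamma\in\ker D$, so $D^2(\eta-\eta')=0$, and integration by parts with no boundary term (the essential self-adjointness Lemma) gives $||D(\eta-\eta')||^2_{x^cL^2}=\langle D^2(\eta-\eta'),\eta-\eta'\rangle_{x^cL^2}=0$; hence $D\eta=D\eta'$ and $\gamma=\gamma'$, so the two summands are determined, $\eta$ itself only up to $\ker D$.

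\emph{Second decomposition.} Now suppose $\alpha\in x^{c+\epsilon}L^2$. I would repeat the argument with $\Delta_{fc,c}=D^2$ in place of $D$; this is the step that invokes the geometrically flat setting, via Theorems \ref{fredfclap}--\ref{regfclap}. By Theorem \ref{fredfclap}, $\Delta_{fc,c}:x^{c+2+\epsilon}H^2_{split}\Omega^*\to x^{c+\epsilon}L^2\Omega^*$ is Fredholm, hence of closed range, and $\ker(\Delta_{fc,c};x^cL^2)=\mathcal{H}_{L^2}$, because $\Delta_{fc,c}\mu=0$ forces $||D\mu||^2_{x^cL^2}=\langle\Delta_{fc,c}\mu,\mu\rangle_{x^cL^2}=0$. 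With $\gamma:=\Pi_{\mathcal{H}_{L^2}}\alpha$, self-adjointness of $\Delta_{fc,c}$ on $x^cL^2$ gives $\alpha-\gamma\in\mathcal{H}_{L^2}^\perp=\overline{\mathrm{Im}\,\Delta_{fc,c}}=\mathrm{Im}\,\Delta_{fc,c}$, so $\Delta_{fc,c}\mu=\alpha-\gamma$ for some $\mu$; by Theorem \ref{regfclap} the borderline $\log$-terms in the expansion of $\mu$ fail to lie in $x^cL^2$ and so are absent, giving $\mu\in x^{c+2+\epsilon}H^2_{split}\Omega^*$. Setting $\eta_1:=\delta_{fc,c}\mu$ and $\eta_2:=d\mu$, I then get $\alpha-\gamma=\Delta_{fc,c}\mu=d\delta_{fc,c}\mu+\delta_{fc,c}d\mu=d\eta_1+\delta_{fc,c}\eta_2$, with each of $d\eta_1$, $\delta_{fc,c}\eta_2$ a second derivative of $\mu\in x^{c+2+\epsilon}H^2_{split}\Omega^*$, hence in $x^{c+\epsilon}L^2$, as is $\gamma$. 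Mutual $x^cL^2$-orthogonality follows from $d^2=\delta_{fc,c}^2=0$, $d\gamma=\delta_{fc,c}\gamma=0$, and boundary-term-free integration by parts: $\langle d\eta_1,\delta_{fc,c}\eta_2\rangle_{x^cL^2}=\langle d^2\eta_1,\eta_2\rangle_{x^cL^2}=0$, $\langle d\eta_1,\gamma\rangle_{x^cL^2}=\langle\eta_1,\delta_{fc,c}\gamma\rangle_{x^cL^2}=0$, and $\langle\delta_{fc,c}\eta_2,\gamma\rangle_{x^cL^2}=\langle\eta_2,d\gamma\rangle_{x^cL^2}=0$. Uniqueness is then automatic, the three summands lying in the pairwise $x^cL^2$-orthogonal subspaces $\overline{d(\cdot)}$, $\overline{\delta_{fc,c}(\cdot)}$, $\mathcal{H}_{L^2}$.

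The hard part will be the closed-range and cokernel-identification step in the first decomposition: upgrading the automatic orthogonal splitting $x^cL^2=\mathcal{H}_{L^2}\oplus\overline{\mathrm{Im}\,D}$ into a genuine splitting with closed range, and verifying that $x^cL^2$-orthogonality of $\beta$ to the finite-dimensional $L^2$-kernel really does place $\beta$ in the image of $D$ acting from the correct split Sobolev space, with the precise decay of the potential as claimed. This is exactly where the Fredholm theorem of \cite{GHAalto} and the explicit indicial data behind Theorem \ref{regfc} (and \ref{regfclap}) are essential; everything else reduces to integration by parts and the essential self-adjointness Lemma.
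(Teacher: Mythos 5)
Your first decomposition is essentially the paper's argument: the weak Hodge splitting $x^cL^2 = \mathcal{H}_{L^2}\oplus\overline{\mathrm{Im}\,D}$, followed by the Fredholm/closed-range input from \cite{GHAalto} to identify the closure of the image with $x^cL^2\Omega^*(M,g_{fc})\cap(d+\delta_{fc,c})(x^{c+1-\epsilon}H^1_{split}\Omega^*(M,g_{fc}))$, and regularity to control $\eta$. That part is fine.

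The second decomposition is where you diverge from the paper, and your route has two genuine problems. First, you solve $\Delta_{fc,c}\mu=\alpha-\gamma$ using Theorems \ref{fredfclap}--\ref{regfclap}, which require the metric to be geometrically flat. Lemma \ref{fchodge} carries no such hypothesis, and it is invoked in the proof of Theorem \ref{l2hodge}, which must hold for an arbitrary (product-type) fibred cusp metric; the paper explicitly restricts the Laplacian results to the flat case precisely because they are not available in general. The paper avoids this by never passing to $\Delta_{fc,c}$: it keeps the $\eta$ from the first-order decomposition, uses Theorem \ref{regfc} to write $\eta=\eta_0+\eta'$ with $\eta_0=u+\frac{dx}{x}\wedge v$ fibre- and base-harmonic, and then shows $d\eta_0$ and $\delta_{fc,c}\eta_0$ separately lie in $x^{c+\epsilon}L^2$ because the operator $\Pi x^2R\Pi+\Pi^\perp d_Y\Pi$ appearing in the splitting preserves the order of vanishing in $x$ (the input from \cite{GH2}). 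Second, even granting flatness, your claim that ``the borderline $\log$-terms in the expansion of $\mu$ fail to lie in $x^cL^2$ and so are absent, giving $\mu\in x^{c+2+\epsilon}H^2_{split}$'' is unjustified: $\mu$ is only produced in $x^{c+2-\epsilon}H^2_{split}\subset x^{c-\epsilon}L^2$, so there is no membership in $x^cL^2$ forcing anything to vanish, and Theorem \ref{regfclap} generically leaves a nonzero $\mu_0=u_{11}\log x+u_{21}+\frac{dx}{x}\wedge(u_{12}\log x+u_{22})$ (the non-log terms $u_{21}$, $u_{22}$ sit exactly at the $L^2$ borderline and are likewise not in $x^{c+2+\epsilon}H^2_{split}$). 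Your conclusion that $d\eta_1=d\delta_{fc,c}\mu$ and $\delta_{fc,c}\eta_2=\delta_{fc,c}d\mu$ lie in $x^{c+\epsilon}L^2$ therefore only covers the $\mu'$ contribution; the contribution of $\mu_0$ is precisely the delicate point, and it is not addressed. You would need either to carry $\mu_0$ through the computation explicitly or, better, to revert to the paper's first-order argument, which handles the borderline term once and works without the flatness assumption.
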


\begin{proof}

We start with a weak Hodge decomposition theorem, which says that
for any $\alpha \in x^cL^2_{fc}\Omega^*(M,g_{fc})$, there is a unique decomposition
\[
  \alpha = \eta + \gamma,
\]
where $\eta$ is in the $x^cL^2$ closure of $(d+\delta_{fc,c})(x^{c+1}H^1_{split}\Omega^*(M,g_{fc}))$ and
$\gamma\in \mathcal{H}_{L^2}(M,g_{fc},c)$.  
To obtain this, we need to show that the orthogonal complement in $x^cL^2_{fc}\Omega^*(M,g_{fc})$
of the closure of $(d+\delta_{fc,c})(x^{c+1}H^1_{split}\Omega^*(M,g_{fc}))$ is 
$\mathcal{H}_{L^2}(M,g_{fc},c)$.  
First we can show the orthocomplement of the image is contained in harmonic forms.
Suppose that $\omega \in  x^cL^2_{fc}\Omega^*(M,g_{fc})$ has the property that
for all $\eta \in x^{c+1}H^1_{split}\Omega^*(M,g_{fc})$,
we have
\[
0 = \langle (d+\delta_{fc,c})\eta, \omega \rangle_{x^cL^2}.
\]
Take a sequence $\omega_n \to \omega$ in $x^c L^2_{fc}\Omega^*(M,g_{fc})$ where all of the $\omega_n$
are smooth and compactly supported.  Then we have 
\[
0 = \lim_{n \to \infty} \langle (d+\delta_{fc,c})\eta, \omega_n \rangle_{x^cL^2}
= \lim_{n \to \infty} \langle \eta, (d+\delta_{fc,c})\omega_n \rangle_{x^cL^2}.
\]
This implies $\lim_{n \to \infty} (d+\delta_{fc,c})\omega_n =0$, which exactly means that $\omega$ is in
the domain of $(d+\delta_{fc,c})$ and $(d+\delta_{fc,c})\omega=0$.  Thus
$\omega \in \mathcal{H}_{L^2}(M,g_{fc},c)$. Now, if $\omega \in \mathcal{H}_{L^2}(M,g_{fc},c)$, then 
we can directly integrate by parts to see that $\omega$ is orthogonal to the image
of $d+\delta_{fc,c}$, so the harmonic forms are also contained in the complement of the image.

 \medskip 
Next we identify the $x^cL^2$ closure of $(d + \delta_{fc,c})(x^{c+1}H^1_{split}\Omega^*(M,g_{fc}))$ as
\[
x^cL^2\Omega^*(M,g_{fc}) \cap (d+\delta_{fc,c})(x^{c+1-\epsilon} H^1_{split}\Omega^*(M,g_{fc})).
\]
To do this, we first let $\alpha \in x^{c+1-\epsilon} H^1_{split}\Omega^*(M,g_{fc})$, $(d+\delta)\alpha \in x^cL^2\Omega^*(M,g_{fc})$ and 
$\gamma\in \mathcal{H}_{L^2}(M,g_{fc},c)$.  Then 
\[
\langle (d+\delta_{fc,c})\alpha, \gamma \rangle_{x^cL^2} = \langle \alpha, (d+\delta_{fc,c})\gamma \rangle_{x^cL^2}=0
.\]
So $(d+\delta_{fc,c})\alpha$ is in the orthogonal complement of $\mathcal{H}_{L^2}(M,g_{fc},c)$.  But by the weak Hodge decomposition, this is equal
to the first space in the claim, so we get the first containment.

Now assume that $\alpha \in x^cL^2\Omega^*(M, g_{fc})$ is in the closure of the image of $d+\delta_{fc,c}$.  This means there is some sequence 
$b_n$ in $x^{c+1}H^1_{split}\Omega^*(M,g_{fc})$ with $|| \alpha - (d+\delta_{fc,c})b_n||_{x^cL^2} \to 0$
as $n\to \infty$.  But since $||\cdot||_{x^{c-\epsilon}L^2} \leq ||\cdot||_{x^cL^2}$, this means also that
$(d+\delta)b_n$ tends to $\alpha \in x^{c-\epsilon} L^2\Omega^*(M, g_{fc})$.  But ${\rm Im}(d+\delta_{fc,c})$ is closed here by the Fredholm
  property, so in fact $\alpha = (d+\delta_{fc,c})\eta$ for some $\eta \in x^{c+1-\epsilon}H^1_{split}\Omega^*(M, g_{fc})$, and we have the 
  second containment.   

\medskip
Finally, we can prove the second part of the lemma.  First assume we have such a three-term decomposition of $\alpha \in x^{c+\epsilon} L^2\Omega_{con}^*(M, g_{fc})$.  Then by integration by parts, the decomposition
is orthogonal with respect to $x^cL^2$; thus it is unique.  Furthermore, 
this means that if $d\alpha=0$, then $\delta_{fc,c} \eta_2=0$.  To show that such a decomposition exists, we only 
need that in the first equation, each of $d\eta$ and $\delta_{fc,c} \eta$ is independently in 
$x^{c+\epsilon} L^2\Omega^*(M, g_{fc})$.  (Here we are using the containment 
$x^{c+1-\epsilon}H^1_{split}\Omega^*(M, g_{fc}) \subset x^{c-\epsilon} L^2\Omega^*(M, g_{fc})$.)
%This follows from somewhat tricky analysis, which we outline here, but which is done in detail in \cite{GH2}.

Suppose first that $\alpha \in x^{c+\epsilon} L^2\Omega_{con}^*(M, g_{fc})$ and write $\alpha = d\eta + \gamma$ as in the decomposition
above.  We know $\gamma$ is also in $x^{c+\epsilon} L^2\Omega_{con}^*(M, g_{fc})$, so $(d+\delta_{fc,c})\eta \in x^{c+\epsilon} L^2\Omega_{con}^*(M, g_{fc})$.
From Theorem \ref{regfc} applied to $\eta$, this means that 
$\eta = \eta_0 + \eta'$, where $\eta' \in x^{c+1+\epsilon} H^1_{split}\Omega_{con}^*(M, g_{fc})$,
and on the end, 
\[
\eta_0 = u + \frac{dx}{x} \wedge v.
\]
By the same indicial root calculations as before, we get $u,v \in \calH^*(B,\calK^{(f/2)-c})$; that is, they are fibre harmonic forms on $Y$ of vertical degree $\frac{f}{2}-c$ that are also harmonic
as forms on $B$ with values in $\mathcal{K}_{(f/2)-c}$.  This means that $d\eta_0$ reduces to
\[
d \eta_0 = \left(\Pi x^2R\Pi  +\Pi^\perp d_Y \Pi \right) u + 
\frac{dx}{x}\wedge\left(\Pi x^2R\Pi  +\Pi^\perp d_Y \Pi \right) v, 
\]
where $\Pi$ is the operator that projects orthogonally onto fibre harmonic forms, $\Pi^\perp=I-\Pi$, and $R$ is a 0th order
operator corresponding to a curvature term.  
The regularity of the operator on the right here preserves the degree of vanishing of a conormal form in $x$ and 
$\eta \in x^{c+\epsilon} L^2\Omega_{con}^*(M, g_{fc})$, so also
$d \eta \in x^{c+\epsilon} L^2\Omega_{con}^*(M, g_{fc})$.  By an analogous argument, also 
$\delta_{fc,c} \eta\in x^{c+\epsilon} L^2\Omega_{con}^*(M, g_{fc})$.  

\end{proof}

As a corollary of the proof of this lemma, we can note the following alternative version of the $x^{c+\epsilon}L^2$ Hodge
decomposition:
\begin{corollary} 
\label{corNfc}
The following decomposition is orthogonal in $x^c L^2\Omega^*(M,g_{fc})$:
\[
x^{c+\epsilon}L^2_{fc}\Omega^*(M,g_{fc}) = (d+\delta_{fc,c})N_{fc,c} + \mathcal{H}_{L^2}(M,g_{fc},c),
\]
where 
\[
N_{fc,c} := \{ \eta \in x^{c+1-\epsilon}H^1_{split}\Omega^*(M,g_{fc}) : \eta |_{x<1/2} = u + \frac{dx}{x} \wedge v + \eta', \qquad \qquad \qquad \qquad\]
\[ \qquad \qquad \qquad
\eta' \in x^{c+1+\epsilon} H^1_{split}\Omega^*(M, g_{fc}); \,  u,v \in \calH^*(B,\calK^{(f/2)-c})\}.
\]
\end{corollary}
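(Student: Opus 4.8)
The plan is to obtain this corollary by repackaging what was already established in the proof of Lemma~\ref{fchodge}: the set $N_{fc,c}$ is precisely the class of potentials that appear when that proof is run on a form lying in the smaller space $x^{c+\epsilon}L^2$. For existence, given $\alpha\in x^{c+\epsilon}L^2_{fc}\Omega^*(M,g_{fc})$, I would combine the $x^cL^2$ weak Hodge decomposition with the closed-range identification proved inside Lemma~\ref{fchodge} to write $\alpha=(d+\delta_{fc,c})\eta+\gamma$ with $\gamma\in\calH_{L^2}(M,g_{fc},c)$ and $\eta\in x^{c+1-\epsilon}H^1_{split}\Omega^*(M,g_{fc})$. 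Since $\gamma\in x^{c+\epsilon}L^2$ by Theorem~\ref{regfc}, we get $(d+\delta_{fc,c})\eta=\alpha-\gamma\in x^{c+\epsilon}L^2$, so Theorem~\ref{regfc} applies to $\eta$ and forces $\eta|_{x<1/2}=u+\frac{dx}{x}\wedge v+\eta'$ with $\eta'\in x^{c+1+\epsilon}H^1_{split}\Omega^*(M,g_{fc})$ and $u,v\in\calH^*(B,\calK^{(f/2)-c})$; this is exactly the condition defining $N_{fc,c}$. Hence $\eta\in N_{fc,c}$ and $\alpha\in(d+\delta_{fc,c})N_{fc,c}+\calH_{L^2}(M,g_{fc},c)$.

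For orthogonality I would argue that $(d+\delta_{fc,c})N_{fc,c}$ lies in the $x^cL^2$-orthocomplement of $\calH_{L^2}(M,g_{fc},c)$. Indeed, $N_{fc,c}\subset x^{c+1-\epsilon}H^1_{split}\Omega^*(M,g_{fc})$ by definition, and the end normal form computation in the proof of Lemma~\ref{fchodge} shows that for every $\eta\in N_{fc,c}$ both $d\eta$ and $\delta_{fc,c}\eta$ lie in $x^{c+\epsilon}L^2\subset x^cL^2$; therefore $(d+\delta_{fc,c})N_{fc,c}\subset x^cL^2\cap(d+\delta_{fc,c})\bigl(x^{c+1-\epsilon}H^1_{split}\Omega^*(M,g_{fc})\bigr)$, and that intersection is identified in the proof of Lemma~\ref{fchodge} with the $x^cL^2$-closure of $(d+\delta_{fc,c})\bigl(x^{c+1}H^1_{split}\Omega^*(M,g_{fc})\bigr)$, hence with the orthocomplement of $\calH_{L^2}(M,g_{fc},c)$ by the weak Hodge decomposition. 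Alternatively, one can integrate by parts directly: for $\eta\in N_{fc,c}$ and $\gamma\in\calH_{L^2}(M,g_{fc},c)$ one has $\eta\in x^{c-\epsilon}L^2$, $(d+\delta_{fc,c})\eta\in x^{c+\epsilon}L^2$, $\gamma\in x^{c+\epsilon}L^2$ and $(d+\delta_{fc,c})\gamma=0$, so the extended $L^2$ pairing~\eqref{extpairing} applies with $T=d+\delta_{fc,c}$ (self-adjoint) and $a=-\epsilon$, giving
\[
\langle(d+\delta_{fc,c})\eta,\gamma\rangle_{fc,c}=\langle\eta,(d+\delta_{fc,c})\gamma\rangle_{fc,c}=0.
\]
Either way the two summands are mutually orthogonal in $x^cL^2$; in particular their intersection is $\{0\}$, the sum is direct, and the decomposition of each $\alpha$ is unique.

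The main point requiring care — and the only place where the corollary is not literally a line of the previous proof — is that $\eta$ is known only to lie in the slightly enlarged weighted space $x^{c+1-\epsilon}H^1_{split}$, not in $x^{c+1}H^1_{split}$, so one must be sure that applying $d+\delta_{fc,c}$ does not throw it out of $x^cL^2$ and, in the direct argument, that no boundary term survives. This is exactly the delicate analysis of \cite{GH2} already used inside Lemma~\ref{fchodge}, namely that the first-order operators producing $d\eta_0$ and $\delta_{fc,c}\eta_0$ from the boundary data preserve the order of vanishing in $x$; the only extra check is that this computation depends solely on the end normal form of $\eta$, hence applies to every element of $N_{fc,c}$ and not merely to potentials arising from a Hodge decomposition. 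A secondary, purely bookkeeping, obstacle is verifying that the end normal form delivered by Theorem~\ref{regfc} matches the definition of $N_{fc,c}$ on the nose — in particular that the leading coefficients $u,v$ take values in $\calH^*(B,\calK^{(f/2)-c})$ and not in a larger fibrewise space — which relies on the simplicity of the relevant indicial roots recorded in the proof of Theorem~\ref{regfc}.
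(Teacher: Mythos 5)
Your proposal is correct and follows essentially the same route as the paper, which states this result without a separate argument, explicitly as "a corollary of the proof" of Lemma~\ref{fchodge}: the potential $\eta$ produced there lies in $x^{c+1-\epsilon}H^1_{split}$ with $(d+\delta_{fc,c})\eta\in x^{c+\epsilon}L^2$, so Theorem~\ref{regfc} forces the end normal form defining $N_{fc,c}$, and orthogonality follows by the integration by parts already carried out in that proof. (Only a trivial bookkeeping remark: in the extended pairing~\eqref{extpairing} the relevant weight shift is $a=+\epsilon$ rather than $-\epsilon$, since $(d+\delta_{fc,c})\eta\in x^{c+\epsilon}L^2$ and $(d+\delta_{fc,c})\gamma=0\in x^{c-\epsilon}L^2$; the conclusion is unaffected.)
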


Now we can complete step (3) in the theorem.  We need to show that the map
\[
\mathcal{H}_{L^2}^*(M,g_{fc},c) \to \frac{{\rm Ker}(d) \subset x^{c+\epsilon} L^2\Omega^*(M,g_{fc})}{x^{c+\epsilon} L^2\Omega^*(M,g_{fc}) \cap d(x^{c-\epsilon}L^2\Omega^*(M,g_{fc}))}
\]
is a surjection.  Given the Hodge decomposition in Lemma \ref{fchodge}, this follows as in the standard Hodge theorem.
Let $\alpha \in x^{c+\epsilon} L^2\Omega^*(M,g_{fc})$ and $d\alpha=0$.  Then $\alpha = d\eta + \gamma$,
where $\gamma \in \mathcal{H}_{L^2}^*(M,g_{fc},c)$ and $\eta \in x^{c-\epsilon}L^2\Omega^*(M,g_{fc})$,
so $[\alpha] = [\gamma]$ in the cohomology group on the right.

\medskip
The last step is to consider the intersection product on these forms.
The $L^2$ intersection product on forms over $M$ with respect to the metric $g_{fc}$ gives a natural pairing
\[
\cap_{L^2}: x^{c}L^2\Omega^j(M,g_{fc}) \otimes x^{-c}L^2\Omega^{n-j}(M,g_{fc}) \to \mathbb{R}.
\]
\[
\alpha \cap_{L^2} \beta:= \int_M \alpha \wedge \beta.
\]
In particular, we can restrict to the space of weighted $L^2$ harmonic forms.  Because 
\[
*_c:\calH_{L^2}^*(M,g_{fc},c) \to \calH_{L^2}^*(M,g_{fc},-c)
\]
is an isomorphism, this means that for every nontrivial $\omega \in \calH_{L^2}^*(M,g_{fc},c)$, we can take
$*_c \omega \in \calH_{L^2}^*(M,g_{fc},-c)$ and get
\[
\omega \cap_{L^2} *_c\omega:= \int_M \omega \wedge *_c \omega := ||\omega||^2_c \neq 0.
\]
Thus it is nondegenerate on these spaces.  Furthermore, the isomorphism in Theorem \ref{l2hodge}
implies that the signature of this pairing in the case $c=0$ is exactly the perverse signature on $X$
with middle perversity, from which we obtain the desired result.

%%%%%%%%%%%%%%%%%%%%%%%%%%%%%%%%%%%%%%%%%%%%%%%%%%%%%%
%%%%%%%%%%%%%%%%%%%%%%%%%%%%%%%%%%%%%%%%%%%%%%%%%%%%%%

\section{Proof of Theorem \ref{exthodge}}

First we examine the space of extended $x^c L^2_{fc}$ harmonic forms.  We can see from Theorem \ref{regfc} that
these are the same as the $x^c L^2_{fc}$ harmonic forms when $\frac{f}{2}-c \notin \{0,\ldots,f\}$.  Thus we will 
restrict our consideration to $c \in \{-f/2, \cdots, f/2\}$. More importantly, we will also now restrict to the 
case where the metric on $M$ is geometrically flat.
From Corollary \ref{corNfc}, we get the following decomposition (note this does not require the flatness assumption).
\begin{lemma}
\label{lemhodge-ep}
There exists a vector space decomposition:
\[
x^{c-\epsilon}L^2\Omega^*(M, g_{fc}) = (d+\delta_{fc,c})(x^{c+1-\epsilon}H^1_{split}\Omega^*(M, g_{fc})) \oplus \mathcal{H}_{L^2}^*(M,g_{fc},c).
\]
\end{lemma}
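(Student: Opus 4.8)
The plan is to treat Lemma \ref{lemhodge-ep} as the weight-$(c-\epsilon)$ analogue of the Hodge decompositions already established at weights $c$ and $c+\epsilon$ in Lemma \ref{fchodge} and Corollary \ref{corNfc}, and to deduce it from those together with the Fredholm theorem of \cite{GHAalto} and a soft density argument; the direct-sum assertion (as opposed to orthogonality in $x^{c-\epsilon}L^2$) is then checked separately. As with Corollary \ref{corNfc}, no flatness hypothesis enters. Abbreviate $R:=(d+\delta_{fc,c})\bigl(x^{c+1-\epsilon}H^1_{split}\Omega^*(M,g_{fc})\bigr)\subseteq x^{c-\epsilon}L^2\Omega^*(M,g_{fc})$; the goal is $x^{c-\epsilon}L^2\Omega^*(M,g_{fc})=R\oplus\mathcal{H}_{L^2}^*(M,g_{fc},c)$.

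For the spanning statement I would first invoke the Fredholm theorem of \cite{GHAalto} with $a=c-\epsilon$, which is permissible once $\epsilon>0$ is small enough to lie outside the discrete exceptional set; this gives that $d+\delta_{fc,c}\colon x^{c+1-\epsilon}H^1_{split}\Omega^*(M,g_{fc})\to x^{c-\epsilon}L^2\Omega^*(M,g_{fc})$ has closed range $R$ of finite codimension. Since $\mathcal{H}_{L^2}^*(M,g_{fc},c)$ is finite-dimensional, the sum of the closed subspace $R$ and this finite-dimensional subspace is again a closed subspace of $x^{c-\epsilon}L^2\Omega^*(M,g_{fc})$. Now Corollary \ref{corNfc} gives $x^{c+\epsilon}L^2\Omega^*(M,g_{fc})=(d+\delta_{fc,c})N_{fc,c}\oplus\mathcal{H}_{L^2}^*(M,g_{fc},c)$, and since $N_{fc,c}\subseteq x^{c+1-\epsilon}H^1_{split}\Omega^*(M,g_{fc})$ this whole space is contained in $R+\mathcal{H}_{L^2}^*(M,g_{fc},c)$. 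Finally $x^{c+\epsilon}L^2\Omega^*(M,g_{fc})$ is dense in $x^{c-\epsilon}L^2\Omega^*(M,g_{fc})$ --- both contain the compactly supported smooth forms densely, and one truncates near $x=0$ and applies dominated convergence --- so a closed subspace that contains it must be the whole space. Hence $R+\mathcal{H}_{L^2}^*(M,g_{fc},c)=x^{c-\epsilon}L^2\Omega^*(M,g_{fc})$.

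It remains to check that the sum is direct, $R\cap\mathcal{H}_{L^2}^*(M,g_{fc},c)=\{0\}$; since $\mathcal{H}_{L^2}^*(M,g_{fc},c)$ is not the $x^{c-\epsilon}L^2$-orthogonal complement of $R$, this genuinely needs the weighted integration-by-parts machinery rather than a projection. Suppose $\gamma=(d+\delta_{fc,c})\eta$ with $\gamma\in\mathcal{H}_{L^2}^*(M,g_{fc},c)$ and $\eta\in x^{c+1-\epsilon}H^1_{split}\Omega^*(M,g_{fc})\subseteq x^{c-\epsilon}L^2\Omega^*(M,g_{fc})$ (by the Sobolev inclusions recalled above). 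The extended $L^2$ pairing (\ref{extpairing}), applied with $T=d+\delta_{fc,c}$ and $a=-\epsilon$ --- legitimate because $(d+\delta_{fc,c})\eta=\gamma\in x^{c-\epsilon}L^2\Omega^*(M,g_{fc})$ and $(d+\delta_{fc,c})\gamma=0$ --- then gives
\[
\|\gamma\|_{fc,c}^2=\langle(d+\delta_{fc,c})\eta,\gamma\rangle_{fc,c}=\langle\eta,(d+\delta_{fc,c})\gamma\rangle_{fc,c}=0,
\]
so $\gamma=0$, and the decomposition follows.

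The only genuinely analytic input is the closedness of $R$, i.e.\ the requirement that $\epsilon$ be chosen so that $c-\epsilon$ is a Fredholm weight for $d+\delta_{fc,c}$; everything else in the spanning half is formal. The step that needs the most care is the last one: $\eta$ need not itself lie in $x^cL^2\Omega^*(M,g_{fc})$, so the integration by parts $\langle(d+\delta_{fc,c})\eta,\gamma\rangle_{fc,c}=\langle\eta,(d+\delta_{fc,c})\gamma\rangle_{fc,c}$ is not automatic and has to be routed through the essential self-adjointness of $d+\delta_{fc,c}$ on $x^cL^2$ together with the extended pairing (\ref{extpairing}), so that no boundary term appears. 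I do not expect a substantive obstacle beyond this.
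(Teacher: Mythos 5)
Your argument is correct, and it reaches the decomposition by a genuinely different route than the paper's. The paper obtains Lemma \ref{lemhodge-ep} by \emph{dualising} Corollary \ref{corNfc}: it identifies $\mathcal{H}_{L^2}^*(M,g_{fc},c)$ as the full annihilator of $(d+\delta_{fc,c})N_{fc,c}$ under the extended pairing (containment plus a dimension count), and then shows that $(d+\delta_{fc,c})(x^{c+1-\epsilon}H^1_{split}\Omega^*(M,g_{fc}))$ is the full annihilator of the harmonic forms by approximating a given $\alpha\in x^{c-\epsilon}L^2$ by forms $\alpha_n\in x^{c+\epsilon}L^2$, decomposing each $\alpha_n$ via Corollary \ref{corNfc}, showing the harmonic components $\tau_n$ tend to zero, and invoking closedness of the range. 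You replace this annihilator bookkeeping with the observation that $R+\mathcal{H}_{L^2}^*(M,g_{fc},c)$ is closed (closed range by the Fredholm theorem at the weight $c-\epsilon$, plus a finite-dimensional summand) and contains the dense subspace $x^{c+\epsilon}L^2\Omega^*(M,g_{fc})$ by Corollary \ref{corNfc}, hence is everything; transversality is then checked separately by the extended integration by parts. The ingredients --- Corollary \ref{corNfc}, the closed-range property at the shifted weight, and the density of $x^{c+\epsilon}L^2$ in $x^{c-\epsilon}L^2$ --- are exactly those the paper uses, but your organisation avoids both the dimension count and the approximating-sequence argument, so it is a modest simplification; what the paper's dual formulation buys in exchange is the explicit identification of each summand as the annihilator of the complementary summand at weight $c+\epsilon$, which is what gets reused in the proof of Lemma \ref{lemhodge-ep2} and beyond. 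One small point to tighten: when you apply (\ref{extpairing}) with $T=d+\delta_{fc,c}$, $\tau=\gamma$ and $a=-\epsilon$, the derivation of that identity also needs $x^{-a}\tau=x^{-\epsilon}\gamma\in x^cL^2$, i.e. $\gamma\in x^{c+\epsilon}L^2$, which is not automatic from $\gamma\in x^cL^2$; it does follow immediately from Theorem \ref{regfc} (which puts $\gamma\in x^{c+1+\epsilon}H^\infty_{split}\Omega^*(M,g_{fc})$), so a one-line citation closes this.
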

\begin{proof}
We obtain this by dualising the decomposition in Corollary \ref{corNfc}.  That is, we show that $\mathcal{H}_{L^2}^*(M,g_{fb},c)$ is the 
annihilator under the extended $x^cL^2$ pairing of $(d+\delta_{fc,c})N_{fc,c}$ and $(d+\delta_{fc,c})(x^{c+1-\epsilon}H^1_{split}\Omega^*(M, g_{fc}))$ is the annihilator of $\mathcal{H}_{L^2}^*(M,g_{fb},c)$.  We see this as follows.

From Corollary \ref{corNfc} we have that $\mathcal{H}_{L^2}^*(M,g_{fb},c)$ is contained in the annihilator
of $(d+\delta_{fc,c})N_{fc,c}$.  Also, its dimension is equal to the codimension in $x^{c+\epsilon}L^2\Omega^*(M, g_{fc})$
of $(d+\delta_{fc,c})N_{fc,c}$, so in fact, it is the entire annihilator.  

Next, we have that $(d+\delta_{fc,c})(x^{c+1-\epsilon}H^1_{split}\Omega^*(M, g_{fc}))$ is contained in the annihilator of 
$\mathcal{H}_{L^2}^*(M,g_{fb},c)$ under the extended pairing.  We need to show that it is the whole annihilator.
Assume that $\alpha \in x^{c-\epsilon}L^2\Omega^*(M, g_{fc})$ and that $\langle \alpha, \gamma \rangle_{x^cL^2} = 0$
for all $\gamma \in \mathcal{H}_{L^2}^*(M,g_{fb},c)$.  
Since $x^{c+\epsilon}L^2\Omega^*(M, g_{fc})$ is dense in $x^{c-\epsilon}L^2\Omega^*(M, g_{fc})$,
we can take a sequence $\{\alpha_n\} \subset x^{c+\epsilon}L^2\Omega^*(M, g_{fc})$ such that
$\alpha_n \to \alpha$ in $x^{c-\epsilon}L^2\Omega^*(M, g_{fc})$.  Then we can decompose each $\alpha_n = (d+\delta_{fc,c})\beta_n + \tau_n$
using the decomposition from Corollary \ref{corNfc}.  
Let $\gamma_k$ be a basis of $\mathcal{H}_{L^2}^*(M,g_{fb},c)$.  We get
\[
0=\lim_{n \to \infty} \max_k \langle \alpha_n , \gamma_k \rangle_{x^cL^2} 
\]
\[= \lim_{n \to \infty} \max_k \langle (d+\delta_{fc,c})\beta_n + \tau_n , \gamma_k \rangle_{x^cL^2}
= \lim_{n \to \infty} \max_k \langle \tau_n , \gamma_k \rangle_{x^cL^2}.
\]
This implies that $\tau_n \to 0$ in $x^{c-\epsilon}L^2\Omega^*(M, g_{fc})$.
So in fact, we simply have $\alpha = \lim_{n \to \infty} (d+\delta_{fc,c})\beta_n$.  But the image of $(d+\delta_{fc,c})$ 
is closed in $x^{c-\epsilon}L^2\Omega^*(M, g_{fc})$, so in fact $\alpha \in (d+\delta_{fc,c})x^{c+1-\epsilon}H^1_{split}\Omega^*(M, g_{fc})$ as required.
\end{proof}

When the metric on $M$ is a geometrically flat fibred cusp metric, we get the following refined version of Lemma \ref{lemhodge-ep}.
\begin{lemma}\label{lemhodge-ep2}
If $g_{fc}$ is a geometrically flat fibred cusp metric, then there exists a vector space decomposition:
\[
x^{c-\epsilon}L^2\Omega^j(M, g_{fc}) = \Delta_{fc,c}(x^{c+2-\epsilon}H^2_{split}\Omega^j(M, g_{fc})) \oplus \mathcal{H}_{L^2}^j(M,g_{fc},c).
\]
\end{lemma}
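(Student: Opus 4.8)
The plan is to mirror the proof of Lemma \ref{lemhodge-ep}, but with $d+\delta_{fc,c}$ replaced by $\Delta_{fc,c}$, using the Fredholm and regularity statements for the Laplacian (Theorems \ref{fredfclap} and \ref{regfclap}) that are available precisely because the metric on $Y$ is geometrically flat. First I would record the $x^{c+\epsilon}L^2$ Hodge decomposition for $\Delta_{fc,c}$: exactly as in Lemma \ref{fchodge} and Corollary \ref{corNfc}, one shows
\[
x^{c+\epsilon}L^2\Omega^j(M,g_{fc}) = \Delta_{fc,c}\,N^{\Delta}_{fc,c} \oplus \mathcal{H}_{L^2}^j(M,g_{fc},c),
\]
orthogonally in $x^cL^2$, where $N^\Delta_{fc,c}$ is the space of $\eta \in x^{c+2-\epsilon}H^2_{split}\Omega^j(M,g_{fc})$ whose expansion on the end has the $\log$-free indicial form from Theorem \ref{regfclap}, i.e. $\eta|_{x<1/2} = u_{21} + \frac{dx}{x}\wedge u_{22} + \eta'$ with $\eta' \in x^{c+2+\epsilon}H^2_{split}$ and $u_{2j}$ harmonic of fibre degree $f/2-c$, the point being that $\Delta_{fc,c}\eta$ lands back in $x^{c+\epsilon}L^2$ because the relevant operator preserves the order of vanishing in $x$ (the same ``tricky analysis'' from \cite{GH2} invoked in Lemma \ref{fchodge}); the orthogonality and hence uniqueness come from integration by parts, and the fact that $\mathcal{H}_{L^2}^j$ is exactly the orthocomplement of the image follows from essential self-adjointness of $\Delta_{fc,c}$ together with a smooth-compactly-supported approximation argument as in Lemma \ref{fchodge}. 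One also needs $\mathcal{H}_{L^2}^j(M,g_{fc},c) = \ker(\Delta_{fc,c}) \cap x^cL^2 = \ker(d+\delta_{fc,c})\cap x^cL^2$, which is the last sentence of Theorem \ref{regfclap}.

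Next I would dualise this under the extended $x^cL^2$ pairing of Equation \eqref{extpairing}, exactly as in the proof of Lemma \ref{lemhodge-ep}. The two claims to verify are: (i) $\mathcal{H}_{L^2}^j(M,g_{fc},c)$ is the full annihilator of $\Delta_{fc,c}N^{\Delta}_{fc,c}$ inside $x^{c+\epsilon}L^2$ — containment is integration by parts (no boundary term, by the extended pairing since $\Delta_{fc,c}$ applied to both factors lies in the appropriate weighted $L^2$), and equality follows by the dimension/codimension count from the orthogonal decomposition just established; and (ii) $\Delta_{fc,c}(x^{c+2-\epsilon}H^2_{split}\Omega^j(M,g_{fc}))$ is the full annihilator of $\mathcal{H}_{L^2}^j(M,g_{fc},c)$ inside $x^{c-\epsilon}L^2$. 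For (ii), given $\alpha \in x^{c-\epsilon}L^2$ annihilating all harmonic forms, approximate $\alpha$ by a sequence $\alpha_n \in x^{c+\epsilon}L^2$ (density of the smaller weighted space in the larger), decompose $\alpha_n = \Delta_{fc,c}\beta_n + \tau_n$ via the $x^{c+\epsilon}L^2$ decomposition above, test against a basis $\gamma_k$ of the finite-dimensional space $\mathcal{H}_{L^2}^j$ to conclude $\tau_n \to 0$ in $x^{c-\epsilon}L^2$, so $\alpha = \lim \Delta_{fc,c}\beta_n$, and finally use that $\mathrm{Im}(\Delta_{fc,c})$ is closed in $x^{c-\epsilon}L^2$ (the Fredholm property, Theorem \ref{fredfclap}, at weight $a = c-\epsilon$) to get $\alpha = \Delta_{fc,c}\beta$ with $\beta \in x^{c+2-\epsilon}H^2_{split}\Omega^j(M,g_{fc})$. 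Combining (i) and (ii) gives the asserted vector space direct sum decomposition of $x^{c-\epsilon}L^2\Omega^j(M,g_{fc})$.

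The main obstacle I anticipate is not the functional-analytic packaging — that is a faithful transcription of the $d+\delta_{fc,c}$ argument — but the ``order-of-vanishing'' statement needed to know that $\Delta_{fc,c}N^{\Delta}_{fc,c} \subset x^{c+\epsilon}L^2$, i.e. that when $\eta$ has the indicial expansion from Theorem \ref{regfclap} with the two $\log x$ coefficients $u_{11}, u_{12}$ set to zero, then $\Delta_{fc,c}\eta$ again lies in $x^{c+\epsilon}L^2$ rather than merely in $x^{c-\epsilon}L^2$. This is exactly where geometric flatness is essential: it guarantees that $\calK$-harmonic forms on $B$ are genuinely $Y$-harmonic (cited from \cite{Ba2}), so that the relevant boundary operator acting on the fibre-harmonic part — the lift of the $b$-Laplacian on $B\times(0,1)$ with values in $\calK$ — commutes with, or at least preserves, the filtration by powers of $x$, and no new borderline $x^{c-\epsilon}$ term is generated. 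I would state this as the analogue of the displayed computation of $d\eta_0$ in the proof of Lemma \ref{fchodge}, citing \cite{GH2} for the fact that the operator preserves the degree of vanishing in $x$, and otherwise keep the exposition brief since every other step is routine given the background theorems.
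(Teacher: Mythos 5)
Your proposal is correct in outline but takes a genuinely different, and considerably heavier, route than the paper. The paper does not redo the Fredholm/annihilator machinery for $\Delta_{fc,c}$ at all: it simply iterates the first-order decomposition of Lemma \ref{lemhodge-ep}. Since $x^{c+1-\epsilon}H^1_{split}\Omega^*(M,g_{fc})\subset x^{c-\epsilon}L^2\Omega^*(M,g_{fc})$, one applies Lemma \ref{lemhodge-ep} to that subspace, uses elliptic regularity (available at all split Sobolev orders in the geometrically flat case) to see that the $(d+\delta_{fc,c})$-preimage of an element of $x^{c+1-\epsilon}H^1_{split}$ lies in $x^{c+2-\epsilon}H^2_{split}$ and that the harmonic summand already lies in $x^{c+1-\epsilon}H^1_{split}$, substitutes back into Lemma \ref{lemhodge-ep}, and observes that the harmonic piece is annihilated by $d+\delta_{fc,c}$, giving $(d+\delta_{fc,c})^2(x^{c+2-\epsilon}H^2_{split})\oplus\mathcal{H}_{L^2}$; restricting to fixed degree is legitimate because $\Delta_{fc,c}$ preserves degree. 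That two-step bootstrap buys brevity and, more importantly, sidesteps the two points where your version has to do real work: the closed-range property of $\Delta_{fc,c}$ at weight $c-\epsilon$ (you correctly invoke Theorem \ref{fredfclap}, whose proof the paper suppresses), and the order-of-vanishing claim $\Delta_{fc,c}N^{\Delta}_{fc,c}\subset x^{c+\epsilon}L^2$. On that last point, note that the indicial root for $\Delta_{fc,c}$ is double --- hence the $\log x$ terms in Theorem \ref{regfclap} --- so a generic $\eta\in x^{c+2-\epsilon}H^2_{split}$ with $\Delta_{fc,c}\eta\in x^{c+\epsilon}L^2$ need not be log-free, and you would have to either show the log terms can be removed or enlarge $N^{\Delta}_{fc,c}$ to include them; this does not break your dualisation step (which only needs orthogonality of the image to $\mathcal{H}_{L^2}$ and closed range), but it is a genuine loose end in the intermediate decomposition as you stated it, and it is precisely the kind of issue the paper's bootstrap avoids entirely.
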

\begin{proof}
Note that 
\[
x^{c+1-\epsilon}H^1_{split}\Omega^*(M,g_{fc}) \subset x^{c-\epsilon}L^2\Omega^*(M, g_{fc}),
\]
so we may apply the decomposition in Lemma \ref{lemhodge-ep} to this space.  We obtain
\[
x^{c+1-\epsilon}H^1_{split}\Omega^*(M,g_{fc}) = x^{c+1-\epsilon}H^1_{split}\Omega^*(M,g_{fc}) \cap 
\]
\[\left((d+\delta_{fc,c})(x^{c+1-\epsilon}H^1_{split}\Omega^*(M, g_{fc})) \oplus \mathcal{H}_{L^2}^*(M,g_{fb},c) \right).
\]
The second term on the right (second row) is already in $x^{c+1-\epsilon}H^1_{split}\Omega^*(M,g_{fc})$ by regularity.  
Also by regularity, if
$(d+\delta_{fc,c})\eta \in x^{c+1-\epsilon}H^1_{split}\Omega^*(M,g_{fc})$, 
then $\eta \in x^{c+2-\epsilon}H^2_{split}\Omega^*(M, g_{fc})$.  So this reduces to
\[
x^{c+1-\epsilon}H^1_{split}\Omega^*(M,g_{fc}) = (d+\delta_{fc,c})(x^{c+2-\epsilon}H^2_{split}\Omega^*(M, g_{fc})) \oplus \mathcal{H}_{L^2}^*(M,g_{fb},c).
\]
Substitute this into the decomposition from Lemma \ref{lemhodge-ep}.  The harmonic form piece from 
$x^{c+1-\epsilon}H^1_{split}\Omega^*(M,g_{fc})$ vanishes when we apply $(d+\delta_{fc,c})$.  Now we
simply note that the Laplacian preserves form degree so we may write decomposition in each degree
to get the desired result.
\end{proof}

Let us consider the extended harmonic forms.
\begin{lemma}
\label{extclosed}
Let $\eta \in \calH_{ext}^*(M,g_{fc},c)$.  Then $d\eta = \delta_{fc,c} \eta=0$ and $\eta_j \in \calH_{ext}^j(M,g_{fc},c)$ for all $j$.
\end{lemma}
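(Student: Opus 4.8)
The plan is to follow the ``the map'' step in the proof of Theorem \ref{l2hodge} almost verbatim; the one new feature is that an extended harmonic form $\eta$ lies only in $x^{c-\epsilon}L^2\Omega^*(M,g_{fc})$, and hence is not in the domain of $d+\delta_{fc,c}$ acting on $x^cL^2$, so to integrate by parts without a boundary term I would use the extended $L^2$ pairing \eqref{extpairing} rather than the naive one. First I would reduce to a single form degree: since $(d+\delta_{fc,c})\eta=0$ we have $\Delta_{fc,c}\eta=(d+\delta_{fc,c})^2\eta=0$, and as $\Delta_{fc,c}$ preserves degree this gives $\Delta_{fc,c}\eta_j=0$ for each $j$. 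It then suffices to prove $(d+\delta_{fc,c})\eta_j=0$ for every $j$: since $d\eta_j$ has degree $j+1$ and $\delta_{fc,c}\eta_j$ has degree $j-1$ they cannot cancel, so both vanish; summing over $j$ yields $d\eta=\delta_{fc,c}\eta=0$, and the combination $\eta_j\in x^{c-\epsilon}L^2$, $(d+\delta_{fc,c})\eta_j=0$ puts $\eta_j$ in $\calH^j_{ext}(M,g_{fc},c)$.

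Next I would upgrade the regularity of $\eta_j$ near the boundary. Applying Theorem \ref{regfc} to $\eta$ (legitimate since $(d+\delta_{fc,c})\eta=0\in x^{c+\epsilon}L^2$) gives $\eta=\eta_0+\eta'$ with $\eta_0=u+\frac{dx}{x}\wedge v$ on the end, where $u,v$ are fibre-harmonic of vertical degree $(f/2)-c$ --- and, in the geometrically flat case in force in this section, honest harmonic forms on $Y$ --- and with $\eta'$ one order smaller; bootstrapping with the parametrix of \cite{GH2} (or comparing with the $\Delta_{fc,c}$-expansion of Theorem \ref{regfclap} to rule out the possible $\log x$ terms) one may take $\eta'\in x^{c+1+\epsilon}H^1_{split}\Omega^*(M,g_{fc})$. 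Passing to degree-$j$ components, $\eta_j=\eta_{0,j}+\eta_j'$, where $(d+\delta_{fc,c})\eta_j'\in x^{c+\epsilon}L^2$ because $d+\delta_{fc,c}$ carries $x^{c+1+\epsilon}H^1_{split}$ into $x^{c+\epsilon}L^2$, while $\eta_{0,j}$ lies in the kernel of the indicial (model) operator of $d+\delta_{fc,c}$, so $(d+\delta_{fc,c})\eta_{0,j}$ is a lower-order error also lying in $x^{c+\epsilon}L^2$. Hence $(d+\delta_{fc,c})\eta_j\in x^{c+\epsilon}L^2\Omega^*(M,g_{fc})$.

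With this in hand the integration by parts is immediate. Put $T=d+\delta_{fc,c}$, which is formally self-adjoint, and $\tau:=(d+\delta_{fc,c})\eta_j$. The hypotheses of \eqref{extpairing} hold with $a=-\epsilon$: indeed $\eta_j\in x^{c-\epsilon}L^2$, $\tau\in x^{c+\epsilon}L^2\subset x^{c-\epsilon}L^2$, and $T^*\tau=\Delta_{fc,c}\eta_j=0\in x^{c+\epsilon}L^2$, so
\[
\|(d+\delta_{fc,c})\eta_j\|_{fc,c}^2=\langle T\eta_j,\tau\rangle_{fc,c}=\langle\eta_j,T^*\tau\rangle_{fc,c}=\langle\eta_j,\Delta_{fc,c}\eta_j\rangle_{fc,c}=0,
\]
the left-hand norm being finite since $\tau\in x^{c+\epsilon}L^2\subset x^cL^2$. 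Therefore $(d+\delta_{fc,c})\eta_j=0$, and the reduction in the first paragraph finishes the proof.

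The step I expect to be the main obstacle is the weight gain in the second paragraph: what matters is not that $\eta_j$ has an asymptotic expansion but that $(d+\delta_{fc,c})\eta_j$ actually lands in $x^{c+\epsilon}L^2$ rather than merely in $x^{c-1-\epsilon}L^2$. This is exactly what lets the extended pairing be applied at the borderline exponent $a=-\epsilon$ --- equivalently, what removes the boundary term --- and it rests on the fine structure of the regularity and parametrix results of \cite{GH2}: that the leading term $\eta_0$ is annihilated by the indicial operator of $d+\delta_{fc,c}$ and that the remainder carries an extra power of $x$ together with a unit of split Sobolev regularity. The geometric flatness hypothesis of this section enters precisely here, ensuring that $u$ and $v$ are genuine harmonic forms on $Y$, which is what makes the indicial computation, and hence the weight gain, available for the degree-preserving $\Delta_{fc,c}$ and not only for $d+\delta_{fc,c}$.
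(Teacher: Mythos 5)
Your proof is correct and follows essentially the same route as the paper: invoke the regularity theorem to split $\eta$ into an indicial leading term plus a decaying remainder, observe that $d\eta$ and $\delta_{fc,c}\eta$ therefore gain weight and land in $x^{c+\epsilon}L^2$, integrate by parts without boundary term via the extended pairing, and finish with the degree-counting argument. The only differences are cosmetic (you decompose by degree first and phrase the pairing as $\langle\eta_j,\Delta_{fc,c}\eta_j\rangle$ rather than $\langle\eta,-\delta_{fc,c}^2\eta\rangle$), and you correctly identify the weight gain as the crux.
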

\begin{proof}
First, by the regularity result, 
$\eta = \eta_0 + \eta'$, where $\eta' \in x^{c+1+\epsilon} H^1_{split}\Omega^*(M, g_{fc})$,
and on the end, 
\[
\eta_0 = x^{c-f/2}\left(x^{(f/2)-c}u_{(f/2)-c} + \frac{dx}{x} \wedge x^{(f/2)-c}v_{(f/2)-c} \right), 
\]
where $u,v \in \calH^*(B,\calK^{(f/2)-c})$.  As in the proof of Lemma \ref{fchodge} above, this means that
$d\eta  \in x^{c+\epsilon}L^2\Omega^*(M, g_{fc})$.  This in turn implies 
$\delta_{fc,c}\eta \in x^{c+\epsilon}L^2\Omega^*(M, g_{fc})$.
Thus $\eta$ is in the domain of $d$ and 
$\delta_{fc,c} \eta$ is in the domain of $\delta_{fc,c}$, so we get
\[
||d \eta||^2_{x^cL^2} = \langle d\eta, -\delta_{fc,c} \eta \rangle_{x^cL^2} =  \langle \eta, -\delta_{fc,c}^2 \eta \rangle_{x^cL^2}=0.
\]
So $d\eta = 0$, and thus also $\delta_{fc,c} \eta=0$.

We now can notice that when we decompose by degree, $d \eta_j=0$ for all $j$, as $d$ shifts all of the form
degrees up by one.  Similarly, $\delta_{fc,c} \eta_j=0$.  Putting these together shows that $(d+\delta_{fc,c})\eta_j=0$
for all $j$.
\end{proof}

When the metric on $M$ is geometrically flat, we can apply the decomposition in Lemma \ref{lemhodge-ep2} to the 
extended harmonic forms to get the following.
\begin{lemma} \label{4.4} If $g$ is a geometrically flat fibred cusp metric on $M$ then
\[
\calH_{ext}^j(M,g_{fc},c) = d(null(\Delta_{fc,c}^{j-1})) + \delta_{fc,c}(null(\Delta_{fc,c}^{j+1})) + \calH_{L^2}^j(M,g_{fc},c),
\]
where $null(\Delta_{fc,c}^j):=\{ \eta \in x^{c+1-\epsilon}H^1_{split}\Omega^*(M, g_{fc}) \mid \Delta_{fc,c}\eta=0\}$
and where the sums are vector space direct sums.
\end{lemma}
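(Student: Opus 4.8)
The plan is to combine Lemma \ref{extclosed} with the Laplace-Hodge decomposition of Lemma \ref{lemhodge-ep2}, applied in the space $x^{c-\epsilon}L^2\Omega^j(M,g_{fc})$ in which the extended harmonic forms live. First I would take $\eta \in \calH_{ext}^j(M,g_{fc},c)$ and, using Lemma \ref{extclosed}, assume without loss of generality that $d\eta = \delta_{fc,c}\eta = 0$ (the reduction to a fixed form degree $j$ is already handled there). By Lemma \ref{lemhodge-ep2}, since $\eta \in x^{c-\epsilon}L^2\Omega^j(M,g_{fc})$, we may write $\eta = \Delta_{fc,c}\beta + \gamma$ with $\beta \in x^{c+2-\epsilon}H^2_{split}\Omega^j(M,g_{fc})$ and $\gamma \in \calH_{L^2}^j(M,g_{fc},c)$. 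The goal is to rewrite $\Delta_{fc,c}\beta = d\delta_{fc,c}\beta + \delta_{fc,c}d\beta$ and to show that the two terms $d\delta_{fc,c}\beta$ and $\delta_{fc,c}d\beta$ lie, respectively, in $d(\mathrm{null}(\Delta_{fc,c}^{j-1}))$ and $\delta_{fc,c}(\mathrm{null}(\Delta_{fc,c}^{j+1}))$.

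To carry that out, I would argue that because $\eta$ is closed and coclosed, the ``extra'' exact and coexact pieces produced by expanding $\Delta_{fc,c}\beta$ must themselves be Laplace-harmonic. Concretely, set $\sigma = \delta_{fc,c}\beta \in x^{c+1-\epsilon}H^1_{split}\Omega^{j-1}(M,g_{fc})$ and $\rho = d\beta \in x^{c+1-\epsilon}H^1_{split}\Omega^{j+1}(M,g_{fc})$, so $\eta - \gamma = d\sigma + \delta_{fc,c}\rho$. Applying $d$ to this identity and using $d\eta = d\gamma = 0$ gives $d\delta_{fc,c}\rho = 0$; applying $\delta_{fc,c}$ and using $\delta_{fc,c}\eta = \delta_{fc,c}\gamma = 0$ gives $\delta_{fc,c}d\sigma = 0$. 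Hence $\Delta_{fc,c}\sigma = \delta_{fc,c}d\sigma = 0$ (since already $d\sigma$ is what survives and $\delta_{fc,c}\sigma = \delta_{fc,c}^2\beta = 0$ after one checks $\delta_{fc,c}^2=0$ from $(d+\delta_{fc,c})^2$ preserving degree — more carefully, $\Delta_{fc,c}\sigma = (d\delta_{fc,c}+\delta_{fc,c}d)\sigma$, and one shows both summands vanish), so $\sigma \in \mathrm{null}(\Delta_{fc,c}^{j-1})$, and symmetrically $\rho \in \mathrm{null}(\Delta_{fc,c}^{j+1})$. This gives the containment $\subseteq$. The reverse containment is easier: if $\sigma \in \mathrm{null}(\Delta_{fc,c}^{j-1})$ then $d\sigma$ is closed, and one checks $\delta_{fc,c}d\sigma = \Delta_{fc,c}\sigma - d\delta_{fc,c}\sigma$; since $\Delta_{fc,c}\sigma = 0$ and $\delta_{fc,c}\sigma$ has degree $j-2$ while $d\sigma$ has degree $j$, an integration-by-parts / degree argument as in Lemma \ref{extclosed} forces $d\sigma$ to be coclosed, hence harmonic in the extended sense; one also notes $d\sigma \in x^{c-\epsilon}L^2$ because $\sigma \in x^{c+1-\epsilon}H^1_{split} \subset x^{c-\epsilon}L^2$ and $d$ maps $H^1_{split}$ boundedly down one Sobolev order. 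Similarly for $\delta_{fc,c}\rho$, and of course $\calH_{L^2}^j \subset \calH_{ext}^j$ trivially.

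The final point is that the sum is a vector-space direct sum. Here I would invoke the orthogonality built into Lemma \ref{lemhodge-ep2}: the decomposition $x^{c-\epsilon}L^2\Omega^j = \Delta_{fc,c}(x^{c+2-\epsilon}H^2_{split}\Omega^j) \oplus \calH_{L^2}^j$ is a genuine direct sum, which separates $\calH_{L^2}^j$ from the rest. Within $\Delta_{fc,c}(x^{c+2-\epsilon}H^2_{split})$, the splitting into a $d$-exact piece and a $\delta_{fc,c}$-coexact piece is unique because $d\sigma$ and $\delta_{fc,c}\rho$ have the property (shown above) that $d\sigma$ is closed and $\delta_{fc,c}\rho$ is coclosed, so if $d\sigma + \delta_{fc,c}\rho = 0$ then pairing with itself under $\langle\cdot,\cdot\rangle_{x^cL^2}$ — legitimate once one knows each term is genuinely in $x^cL^2$, not just $x^{c-\epsilon}L^2$, which follows from the ``preservation of order of vanishing'' fact cited from \cite{GH2} in the proof of Lemma \ref{fchodge} applied to the normal forms of $\sigma,\rho$ — gives $\|d\sigma\|^2 + \|\delta_{fc,c}\rho\|^2 = 0$. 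I expect the main obstacle to be exactly this last integrability subtlety: a priori $d\sigma$ and $\delta_{fc,c}\rho$ only lie in the slightly-too-big space $x^{c-\epsilon}L^2$, so to integrate by parts and conclude directness (and to identify the boundary-data maps cleanly in the next theorems) one must push through the regularity analysis showing that the exact and coexact components of an extended harmonic form have no boundary term, reusing the $\Pi x^2 R\Pi + \Pi^\perp d_Y\Pi$ computation and its order-preservation from \cite{GH2}. Everything else is bookkeeping with the Sobolev containments $xH^1_\phi \subset xH^1_{split}\subset H^1_b\subset L^2$.
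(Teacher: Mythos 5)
Your proposal follows the paper's route for everything except the final directness step, and it is there that it has a genuine gap. The containments are fine: decomposing $\eta=\Delta_{fc,c}\beta+\gamma$ via Lemma \ref{lemhodge-ep2}, setting $\sigma=\delta_{fc,c}\beta$, $\rho=d\beta$, and using closedness and coclosedness from Lemma \ref{extclosed} to place $\sigma$ and $\rho$ in the null spaces of $\Delta_{fc,c}$ is exactly what the paper does (phrased there in terms of the space $W^j$), and your reverse containment via ``$(d+\delta_{fc,c})\sigma$ is extended harmonic, hence closed and coclosed degree by degree'' is also the paper's argument.

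The gap: you assert that $d\sigma$ and $\delta_{fc,c}\rho$ are ``genuinely in $x^cL^2$,'' justified by the order-of-vanishing preservation from \cite{GH2}. This is false, and if it were true it would trivialize the lemma: $d(null(\Delta_{fc,c}^{j-1}))$ and $\delta_{fc,c}(null(\Delta_{fc,c}^{j+1}))$ would then consist of closed and coclosed forms lying in $x^cL^2$, i.e.\ would be contained in $\calH^j_{L^2}(M,g_{fc},c)$, and the three-term decomposition would collapse --- contradicting Theorems \ref{les} and \ref{1.6}, where precisely these two summands carry the nontrivial boundary data. The order-preservation fact applies to $d$ acting on the normal form $u+\frac{dx}{x}\wedge v$ with $u,v$ harmonic on $B$ with values in $\calK$ (the situation of Theorem \ref{regfc}); elements of $null(\Delta_{fc,c}^{*})$ instead have the normal form of Theorem \ref{regfclap} with $\log x$ terms, and $d(u_{11}\log x)=\frac{dx}{x}\wedge u_{11}$ sits exactly at the critical order, so $d\sigma\notin x^cL^2$ whenever $u_{11}\neq 0$. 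The paper's actual mechanism is different: the leading term of $d\sigma$ is purely of normal type ($\frac{dx}{x}\wedge u_{11}$, since $d_Yu_{ij}=0$ for $Y$-harmonic coefficients), while the leading term of $\delta_{fc,c}\rho$ is purely tangential (as $\delta_{fc,c}=\delta_{fc}-2c\frac{dx}{x}\lrcorner$ applied to the normal form produces no $\frac{dx}{x}$ component), so $*_c\delta_{fc,c}\rho$ carries a $\frac{dx}{x}$ factor. Hence the integrand $d\sigma\wedge *_c\,\delta_{fc,c}\rho$ has identically vanishing leading term, the pairing $\langle d\sigma,\delta_{fc,c}\rho\rangle_{x^cL^2}$ converges, and the Stokes boundary term $\lim_{x\to 0}\int_Y\sigma\wedge *_c\,\delta_{fc,c}\rho$ vanishes because restriction to $Y$ kills the $dx$ factor. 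This gives $\langle d\sigma,\delta_{fc,c}\rho\rangle_{x^cL^2}=0$ without either form being in $x^cL^2$, which is what you need: if $d\sigma+\delta_{fc,c}\rho=0$ then $\|d\sigma\|^2_{x^cL^2}=-\langle d\sigma,\delta_{fc,c}\rho\rangle_{x^cL^2}=0$. You correctly flagged this as the delicate point, but the fix you propose would not work as stated.
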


\begin{proof}
By Lemma \ref{lemhodge-ep2}, we have 
\[
\calH_{ext}^j(M,g_{fc},c) = \Delta_{fc,c}W^j + \calH_{L^2}^j(M,g_{fc},c),
\]
where $\Delta_{fc,c}W^j = \calH^j_{ext}(M,g_{fc},c) \cap \Delta_{fc,c}(x^{c+2-\epsilon}H^2_{split}\Omega^j(M, g_{fc}))$.
Note by bounded mapping properties, both $d\delta_{fc,c}W^j$ and $\delta_{fc,c}W^j$ are in 
$x^{c-\epsilon}L^2\Omega^j(M, g_{fc})$.  Furthermore, applying $d$ to both sides tells us by Lemma
\ref{extclosed} that $d\delta_{fc,c}dW^j=0$.  Similarly, applying $\delta_{fc,c}$ to both sides we get that
$\delta_{fc,c}d\delta_{fc,c}W^j=0$.  Thus $d\delta_{fc,c}W^j$ and $\delta_{fc,c}dW^j$ are independently
in $\calH_{ext}^j(M,g_{fc},c)$.  Thus we may write
\begin{equation}\label{Wj}
\calH_{ext}^j(M,g_{fc},c) = d\delta_{fc,c}W^{j} + \delta_{fc,c}dW^{j} + \calH_{L^2}^j(M,g_{fc},c).
\end{equation}

Now let us consider what $W^j$ looks like.  We can apply $d$ to both sides of this equation to see that
$d\delta_{fc,c}dW^j=0$.  Similarly, applying $\delta_{fc,c}$ to both sides, we get that $\delta_{fc,c}d\delta_{fc,c}W^j=0$.
This means that $dW^j$ and $\delta_{fc,c}W^j$ are both contained in $null(\Delta_{fc,c}^*)$.  In particular, we have that
$dW^j \subset null(\Delta_{fc,c}^{j+1})$ and $\delta_{fc,c}W^j \subset null(\Delta_{fc,c}^{j-1})$.  

Now consider $\tau \in null(\Delta_{fc,c}^{j})\subset x^{c-\epsilon}L^2\Omega(M,g_{fc})$.  
Then $(d+\delta_{fc,c})\tau \in \calH_{ext}^j(M,g_{fc},c)$, but by Lemma \ref{extclosed} again, all such forms are 
both closed and coclosed, so $d\delta_{fc,c}\tau=\delta_{fc,c}d\tau=0$ independently.  This means
that also $d\tau, \delta_{fc,c}\tau \in \calH_{ext}^j(M,g_{fc},c)$ independently, so we may
replace $\delta_{fc,c} W^j$ by $null(\Delta_{fc,c}^{j-1})$ and $d W^j$ by $null(\Delta_{fc,c}^{j+1})$
in Equation (\ref{Wj}).  

Finally, we already know that the third term has trivial intersection with the sum of the first two.
But  if $\tau \in null(\Delta_{fc,c}^{*})$, then by the Theorem \ref{regfclap}, 
$\tau = \tau_0 + \tau'$, where $\tau' \in x^{c+1+\epsilon} H^1_{split}\Omega_{con}^*(M, g_{fc})$, and 
on the end, 
\begin{equation}\label{harmend}
\eta_0 = \left(u_{11}\log x + u_{21}+ \frac{dx}{x} \wedge (u_{12}\log x + u_{22}) \right), 
\end{equation}
where $u_{i,j} \in \calH^*(Y)$.  We have that $\delta_{fc,c}=\delta_{fc} - 2c\frac{dx}{x} \lrcorner$.
This means that $\delta_{fc}\eta_0$ will have no $\frac{dx}{x}$ part, so $*_c \delta_{fc}\eta_0$ will
have only the $\frac{dx}{x}$ part.  Now consider the boundary term in
the integration by parts: 
\[
<d\tau, \delta_{fc,c} \eta>_{x^cL^2} = \int_M d\tau \wedge *_c \delta_{fc,c}\eta=\lim_{x\to 0} \int_Y \tau(x) \wedge *_c \delta_{fc,c}\eta(x)=0.
\]
Because by hypothesis $\tau$ and $\eta$ are polyhomogeneous, as they are in the kernel of 
the Laplacian, the boundary terms containing $\tau'$ and $\eta'$ will vanish to some positive
degree in $x$ at the boundary.  
However, because the remaining term has a $\frac{dx}{x}$ in it, this term is identically
zero when integrated over $Y$.  Thus we can integrate by parts to show that the first
two terms in the decomposition also have trivial intersection, which finishes the proof.
\end{proof}

This leads us to make the following definitions.  Let $p =(f/2)-c \in \{0,\ldots,f+1\}$.  Define
\[
\mathcal{IH}^j_{(f/2)-c-\epsilon}(M,g_{fc}) := d(null(\Delta_{fc,c}^{j-1})) + \calH_{L^2}^j(M,g_{fc},c),
\]
and
\[
\mathcal{IH}^j_{(f/2)-c+\epsilon}(M,g_{fc}) := \delta_{fc,c}(null(\Delta_{fc,c}^{j+1})) + \calH_{L^2}^j(M,g_{fc},c).
\]
Now we can prove the version of Poincar\'e duality on extended harmonic forms.
\begin{lemma}\label{PD}
The weighted Hodge star operator, $*_c$, gives
an isomorphism
\begin{equation}\label{poincare}
*_c: \mathcal{IH}^j_{(f/2)-c-\epsilon}(M,g_{fc}) \to \mathcal{IH}^{n-j}_{(f/2)+c+\epsilon}(M,g_{fc}).
\end{equation}
\end{lemma}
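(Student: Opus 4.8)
The plan is to establish the isomorphism by checking that $*_c$ maps each of the three summands in the decomposition of $\mathcal{IH}^j_{(f/2)-c-\epsilon}(M,g_{fc})$ from Lemma \ref{4.4} into the appropriate summand of $\mathcal{IH}^{n-j}_{(f/2)+c+\epsilon}(M,g_{fc})$, and that the resulting map is a bijection. The key observation driving everything is the standard conjugation identity: since $*_c = x^{-2c}*$ and $*$ satisfies $*\,*=\pm 1$, $\delta_{fc,c} = \pm *_c\, d\, *_c$ and $d = \mp *_c\,\delta_{fc,c}\,*_c$, so that $*_c$ intertwines $d$ with $\delta_{fc,c}$ up to sign. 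Consequently $*_c$ commutes with the weighted Laplacian $\Delta_{fc,c}$ in the sense that it carries the $c$-Laplacian to the $(-c)$-Laplacian, hence maps $null(\Delta_{fc,c}^{j-1})$ onto $null(\Delta_{fc,-c}^{n-j+1})$, and likewise for the other degree; I would first record this carefully, being attentive to the weight flipping from $c$ to $-c$, which is exactly the shift $(f/2)-c-\epsilon \mapsto (f/2)+c+\epsilon$ appearing in the target.

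Next I would go through the three pieces. On $\calH_{L^2}^j(M,g_{fc},c)$ the map $*_c$ is already known to be an isomorphism onto $\calH_{L^2}^{n-j}(M,g_{fc},-c)$ — this was used in the proof of Theorem \ref{l2hodge} — so that summand takes care of itself. For the term $d(null(\Delta_{fc,c}^{j-1}))$: given $\tau \in null(\Delta_{fc,c}^{j-1})$, Lemma \ref{extclosed} (applied to the extended harmonic form $(d+\delta_{fc,c})\tau$, whose pieces are harmonic by that lemma) gives that $d\tau$ is closed and coclosed, and $*_c(d\tau) = \pm \delta_{fc,-c}(*_c\tau)$ with $*_c\tau \in null(\Delta_{fc,-c}^{n-j+1})$; hence $*_c$ sends $d(null(\Delta_{fc,c}^{j-1}))$ into $\delta_{fc,-c}(null(\Delta_{fc,-c}^{n-j+1}))$, which is precisely the first summand of $\mathcal{IH}^{n-j}_{(f/2)+c+\epsilon}(M,g_{fc})$. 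Symmetrically $*_c$ carries $\delta_{fc,c}(null(\Delta_{fc,c}^{j+1}))$ to $d(null(\Delta_{fc,-c}^{n-j-1}))$. Since $*_c$ is its own inverse up to sign, and since $-(-c)=c$, the same argument applied with $-c$ in place of $c$ furnishes a two-sided inverse; combined with the fact (already part of Lemma \ref{4.4}) that all three sums are direct, this gives the claimed isomorphism.

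The one point requiring genuine care — and the step I expect to be the main obstacle — is checking that the decomposition is respected \emph{as stated}, i.e. that $*_c$ does not mix the $d$-exact summand with the $\delta$-coexact one. A priori $*_c(d\tau)$ could land in the harmonic-plus-$\delta$-exact part in a tangled way; the resolution is exactly the integration-by-parts/boundary-term computation at the end of the proof of Lemma \ref{4.4}, which shows $d(null(\Delta_{fc,c}^{\bullet}))$ and $\delta_{fc,c}(null(\Delta_{fc,c}^{\bullet}))$ are orthogonal under the relevant pairing and meet $\calH_{L^2}$ trivially. So I would invoke that lemma to know the target decomposition is also direct, verify the three inclusions above, and then a dimension count (or the explicit inverse $*_c^{-1}=\pm *_c$, using that $*_c$ for weight $c$ composed with $*_c$ for weight $-c$ is $\pm\mathrm{id}$) closes the argument. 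The boundary-term vanishing — using polyhomogeneity from Theorems \ref{regfc}, \ref{regfclap} and the fact that the surviving term carries a $\frac{dx}{x}$ and hence integrates to zero over the fibre $Y$ — is the only analytically nontrivial input, and everything else is formal manipulation of $*_c$.
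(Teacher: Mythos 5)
Your proposal is correct and follows essentially the same route as the paper: the paper's proof likewise uses the conjugation identities $*_c*_{-c}=(-1)^{j(n-j)}$ and $\delta_{fc,c}=(-1)^{n(j+1)+1}*_{-c}\,d\,*_c$ to show that $*_c$ carries extended harmonic forms of weight $c$ to those of weight $-c$ and sends $d\eta+\gamma$ to $\delta_{fc,-c}\bigl(\pm *_c\eta\bigr)+*_c\gamma$ with $*_c\eta$ in the null space of $\Delta_{fc,-c}$. The only cosmetic differences are that you speak of ``three summands'' of $\mathcal{IH}^j_{(f/2)-c-\epsilon}$ where its definition has only two, and that you belabour the directness of the target decomposition, which is simply Lemma \ref{4.4} applied at weight $-c$.
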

We have that $\omega \in \mathcal{H}^j_{ext}(M,g_{fc},c)$ if and only if
$\omega \in x^{c-\epsilon}L^2\Omega^*(M,g_{fc})$ and $d\omega = \delta_c \omega =0$.
So suppose we have $\omega \in \mathcal{H}^j_{ext}(M,g_{fc},c)$.  Then since $*_c = x^{-2c}*$, 
we get $*_c\omega \in x^{-c-\epsilon}L^2\Omega^*(M,g_{fc})$.  Next we can use the
equations $*_c*_{-c}=(-1)^{j(n-j)}$ and $\delta_{fc,c} = (-1)^{n(j+1)+1}*_{-c}d*_c$ to write 
\[
\delta_{fc,-c} *_c \omega  = (-1)^{n(j+1)+1}*_c d *_{-c}*_c \omega =(-1)^{n-j+1} *_c d\omega = *_c 0 = 0,
\]
and
\[
d (*_c \omega) = (-1)^{(n-j+1)(j-1)}*_c (*_{-c}d*_{c}) \omega = (-1)^{j^2}*_c \delta_{fc,c} \omega = (-1)^{j^2}*_c 0 = 0.
\]
Thus $*_c \omega \in \calH_{ext}^{n-j}(M,g_{fc},-c)$.
Finally, we can note that if $\omega \in \mathcal{IH}^j_{(f/2)-c-\epsilon}(M,g_{fc})$, then
\[
*_c \omega = *_c(d\eta + \gamma) = \delta_{-c} ((-1)^{(j-1)(n-j+1)+nj+1}*_c \eta) + *_c \gamma.
\]
By the same argument that shows $*_c \omega \in \calH_{ext}^{n-j}(M,g_{fc},-c)$, we get that 
$*_c \gamma \in \calH_{L^2}^{n-j}(M,g_{fc},-c)$ and that 
\[(-1)^{(j-1)(n-j+1)+nj+1}*_c \eta \in x^{-c-\epsilon}L^2\Omega^*(M,g_{fc})\]
and 
\[
\Delta_{fc,-c} ((-1)^{(j-1)(n-j+1)+nj+1}*_c \eta)=0.
\]
So $*_c \omega \in \mathcal{IH}^j_{(f/2)+c+\epsilon}(M,g_{fc})$, as desired.

\medskip
The remainder of Theorem \ref{exthodge} is then a corollary of the following two lemmas.
\begin{lemma}\label{GI2}
Let $g$ be a geometrically flat fibred cusp metric on $M$.  Then 
\[
\mathcal{IH}^j_{(f/2)-c+\epsilon}(M,g_{fc}) \cong IH^j_{(f/2)-c+\epsilon}(X,B).
\]
\end{lemma}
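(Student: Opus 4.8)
The plan is to produce a map at the level of forms from $\mathcal{IH}^j_{(f/2)-c+\epsilon}(M,g_{fc})$ into a complex that computes $IH^j_{(f/2)-c+\epsilon}(X,B)$, and then to check it is an isomorphism by a dimension count. Recall from Lemma \ref{wcoho} and the subsequent remarks that $IH^j_{(f/2)-c+\epsilon}(X,B) \cong WH^j_{fc}(M,c+\epsilon)$, the cohomology of the complex $WC^*_{fc}(M,c+\epsilon)$ of forms $\omega$ with $\omega, d\omega \in x^{c+\epsilon}L^2\Omega^*(M,g_{fc})$. The first step is to show that each element of $\mathcal{IH}^j_{(f/2)-c+\epsilon}(M,g_{fc}) = \delta_{fc,c}(null(\Delta_{fc,c}^{j+1})) + \calH_{L^2}^j(M,g_{fc},c)$ is closed and lies in $x^{c+\epsilon}L^2$: for the harmonic summand this is the regularity statement in Theorem \ref{regfc} (in the flat case, $\omega \in x^{c+1+\epsilon}H^\infty_{split} \subset x^{c+\epsilon}L^2$), and for $\delta_{fc,c}\tau$ with $\tau \in null(\Delta_{fc,c}^{j+1})$ we argue as in Lemma \ref{extclosed} and Lemma \ref{4.4}: $\tau$ is polyhomogeneous with leading term of the form \eqref{harmend}, $\delta_{fc,c}$ applied to such a term preserves the order of vanishing in $x$ (using the geometrically flat structure and the analysis cited from \cite{GH2}), so $\delta_{fc,c}\tau$, which is closed since $d\delta_{fc,c}\tau = 0$ by Lemma \ref{4.4}, actually lands in $x^{c+\epsilon}L^2$. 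This gives a well-defined map $\mathcal{IH}^j_{(f/2)-c+\epsilon}(M,g_{fc}) \to WH^j_{fc}(M,c+\epsilon)$ by sending a form to its cohomology class.

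Next I would check injectivity. Suppose $\omega = \delta_{fc,c}\tau + \gamma$ with $\gamma$ harmonic, and $\omega = d\xi$ for some $\xi \in WC^{j-1}_{fc}(M,c+\epsilon) \subset x^{c+\epsilon}L^2 \subset x^{c-\epsilon}L^2$. Since $\omega$ is coclosed (it is a sum of a coclosed form and a harmonic form) and lies in $x^{c}L^2$, and $\xi \in x^{c}L^2$ has $d\xi = \omega \in x^{c}L^2$, the extended $L^2$ pairing (the computation around \eqref{extpairing}) with no boundary term gives $\|\omega\|^2_{x^cL^2} = \langle d\xi, \omega\rangle_{x^cL^2} = \langle \xi, \delta_{fc,c}\omega\rangle_{x^cL^2} = 0$, so $\omega = 0$. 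For surjectivity, given a closed $\alpha \in x^{c+\epsilon}L^2$, apply the $x^{c+\epsilon}L^2$ Hodge decomposition from Lemma \ref{fchodge}: $\alpha = d\eta_1 + \delta_{fc,c}\eta_2 + \gamma$ with each summand independently in $x^{c+\epsilon}L^2$. Since $d\alpha = 0$ and the decomposition is $x^cL^2$-orthogonal, $\delta_{fc,c}\eta_2 = 0$, hence $\alpha = d\eta_1 + \gamma$. It remains to recognize $\gamma + (\text{a coexact piece in }\delta_{fc,c}(null(\Delta_{fc,c}^{j+1})))$ representing the same class; in fact $[\alpha] = [\gamma]$ already, and $\gamma \in \mathcal{IH}^j_{(f/2)-c+\epsilon}(M,g_{fc})$, so the map is onto.

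The main obstacle, and the place I would spend the most care, is the boundary-term analysis guaranteeing that $\delta_{fc,c}(null(\Delta_{fc,c}^{j+1}))$ genuinely lies in $x^{c+\epsilon}L^2$ (not merely $x^{c-\epsilon}L^2$) and that the corresponding integration-by-parts identities hold with no boundary contribution — this is exactly the ``tricky analysis'' invoked from \cite{GH2} in the proof of Lemma \ref{fchodge}, now applied to $\Delta_{fc,c}$ rather than $d+\delta_{fc,c}$, which is why the geometrically flat hypothesis is essential (it is what makes the relevant Fredholm and regularity theorems, Theorems \ref{fredfclap} and \ref{regfclap}, available for the Laplacian). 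A secondary subtlety is that the isomorphism should be checked to be the \emph{same} map, up to the identifications already in place, as the natural one appearing in Theorem \ref{les}; this amounts to tracking that the Hodge-theoretic representative of a class and the $WC^*_{fc}$ representative agree in cohomology, which follows from the decomposition above since all the non-harmonic pieces are exact or coexact. Once these analytic points are in hand, the isomorphism follows formally, and the analogous statement $\mathcal{IH}^j_{(f/2)-c-\epsilon}(M,g_{fc}) \cong IH^j_{(f/2)-c-\epsilon}(X,B)$ is proved the same way using the $x^{c-\epsilon}L^2$ side of Lemma \ref{lemhodge-ep} together with the inclusion of complexes $WC^*_{fc}(M,c+\epsilon) \subset WC^*_{fc}(M,c-\epsilon)$.
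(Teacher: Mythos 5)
There is a genuine gap, and it originates in the very first step: the identification of the target complex is inverted. By Lemma \ref{wcoho} and the remarks following it, the weight--perversity correspondence is $WH^*_{fc}(M,c-\epsilon)\cong IH^*_{(f/2)-c}(X,B)=IH^*_{(f/2)-c+\epsilon}(X,B)$ and $WH^*_{fc}(M,c+\epsilon)\cong IH^*_{(f/2)-c-1}(X,B)=IH^*_{(f/2)-c-\epsilon}(X,B)$: a \emph{larger} perversity corresponds to a \emph{smaller} weight (weaker decay). So the complex computing $IH^j_{(f/2)-c+\epsilon}(X,B)$ is $WC^*_{fc}(M,c-\epsilon)$, not $WC^*_{fc}(M,c+\epsilon)$ as you assert. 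This is not a cosmetic slip; it breaks both halves of your argument. First, your claim that $\delta_{fc,c}\bigl(null(\Delta_{fc,c}^{j+1})\bigr)$ lies in $x^{c+\epsilon}L^2$ is false: by Theorem \ref{regfclap} and Equation \eqref{harmend}, such a form has on the end a leading term $u_{12}$ that is constant in $x$ with fibre degree $(f/2)-c$, which sits exactly at the critical weight and lies in $x^{c-\epsilon}L^2\setminus x^{c}L^2$. (If your claim were true, these forms would be closed, coclosed and in $x^cL^2$, hence in $\calH^j_{L^2}(M,g_{fc},c)$, and the extended space would collapse to the $L^2$ harmonic forms --- contradicting the whole point of Theorem \ref{exthodge}.) Second, your surjectivity argument, which applies the $x^{c+\epsilon}L^2$ decomposition of Lemma \ref{fchodge} to a closed $\alpha\in x^{c+\epsilon}L^2$ and concludes $[\alpha]=[\gamma]$ with $\gamma$ harmonic, only shows that $L^2$ harmonic forms exhaust the classes representable at weight $c+\epsilon$; that is the content of Theorem \ref{l2hodge} (the image of the lower perversity group in the upper one), not of Lemma \ref{GI2}. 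The coexact summand $\delta_{fc,c}T_c$ exists precisely to account for classes in $IH^j_{(f/2)-c+\epsilon}$ that are \emph{not} in that image, and your argument never produces them.

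The correct route, which the paper follows, is to map into $WH^j_{fc}(M,c-\epsilon)$ and to run surjectivity through the Laplacian decomposition of Lemma \ref{lemhodge-ep2} at weight $c-\epsilon/2$: writing a closed $\alpha=\Delta_{fc,c}\tau+\gamma$, one checks $\Delta_{fc,c}(d\tau)=0$ and sets $\omega=\delta_{fc,c}d\tau+\gamma$, so that $\alpha-\omega=d(\delta_{fc,c}\tau)$ is exact within the weight-$(c-\epsilon)$ complex; this is where the geometrically flat hypothesis (via Theorems \ref{fredfclap} and \ref{regfclap}) is actually used. Your injectivity computation is essentially the paper's and is fine once the target is corrected, since $\delta_{fc,c}\omega=0\in x^{c+\epsilon}L^2$ pairs against $\alpha\in x^{c-\epsilon}L^2$ through the extended pairing \eqref{extpairing}. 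Your closing remark is also off: the companion statement for $\mathcal{IH}^j_{(f/2)-c-\epsilon}$ is obtained in the paper by Poincar\'e duality (Lemma \ref{PD}) rather than by repeating the argument at the other weight.
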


\begin{proof}[Proof of Lemma \ref{GI2}]
We make use of the isomorphism in Lemma \ref{wcoho}
to rewrite 
\[
IH^j_{(f/2)-c+\epsilon}(X,B) \cong WH^j_{fc}(M,c-(\epsilon/2)).
\]
So we need to show that
\begin{equation}
\label{uppermid}
\delta_{fc,c}(null(\Delta_{fc,c}^{j+1})) + \calH_{L^2}^j(M,g_{fc},c) \cong WH^j_{fc}(M,c-\epsilon).
\end{equation}

We already know that there is a map from the right side to the left that simply takes 
an element $\omega$ on the right to its class in $WH^j_{fc}(M,c-\epsilon)$.  This makes sense
because we know the right side is contained in $\calH_{ext}^j(M,g_{fc},c)$, thus  
is in $x^{c-\epsilon}L^2\Omega^j(M,g_{fc})$ for all $\epsilon>0$.  Further, elements on the left are closed
by Lemma \ref{extclosed}.  Thus we need to show that this map is both injective and surjective.

First look at surjectivity.  Suppose that $\alpha \in x^{c-(\epsilon/2)}L^2\Omega^j(M,g_{fc})$ and that $d\alpha=0$.
Then by Lemma \ref{lemhodge-ep2}, we can decompose it as
\[
\alpha = \Delta_{fc,c}(\tau) + \gamma,
\]
where $\tau \in x^{c+1-\epsilon}H^1_{split}\Omega^j(M,g_{fc})$ and $\gamma \in \calH^j_{L^2}(M,g_{fc},c).$
Now applying $d$ to both sides, we get that $d\delta_{fc,c} d\tau=0$.  But this implies that in fact 
$\Delta_{fc,c}(d\tau)=0$, that is $d\tau \in null(\Delta_{fc,c}^{j-1})$.  
Let $\omega = \delta_{fc,c} d\tau + \gamma$, which is in the space on the left in Equation \ref{uppermid}.
Then we have $\alpha = \omega + d(\delta_{fc,c} \tau)$, where $\delta_{fc,c}\tau \in x^{c-(\epsilon/2)}L^2\Omega^j(M,g_{fc})$.
Thus the map is surjective.

Now consider injectivity.  Suppose $\omega \in \delta_{fc,c}(null(\Delta_{fc,c}^{j+1})) + \calH_{L^2}^j(M,g_{fc},c)$.
Then from Lemma \ref{extclosed} we also know that $\delta_{fc,c} \omega=0 \in x^{c+\epsilon}L^2\Omega^j(M,g_{fc})$.
Assume $\omega = d\alpha$ for some $\alpha \in x^{c-\epsilon}L^2\Omega^j(M,g_{fc})$.  Then we can 
use the extended pairing to get
\[
||\omega||^2_c = \langle d\alpha,\omega \rangle_{c} = \langle \alpha, \delta_{fc,c} \rangle_c=0.
\]
Thus we get that $\omega=0$, and the map is also injective.
\end{proof}

\begin{lemma}\label{GI1}
Let $g$ be a geometrically flat fibred cusp metric on $M$.  Then 
\[
\mathcal{IH}^j_{(f/2)-c-\epsilon}(M,g_{fc}) \cong IH^j_{(f/2)-c-\epsilon}(X,B) .
\]
\end{lemma}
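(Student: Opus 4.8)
\emph{The plan} is to obtain this from Lemma \ref{GI2} by means of the Poincar\'e duality of Lemma \ref{PD}, rather than re-running the Hodge-theoretic argument of Lemma \ref{GI2} with $d$ and $\delta_{fc,c}$ interchanged. First dispose of the degenerate case: if $p := (f/2)-c$ is not an integer then, as noted just after Theorem \ref{les}, $\mathcal{IH}^j_{(f/2)-c-\epsilon}(M,g_{fc}) = \mathcal{H}^j_{L^2}(M,g_{fc},c)$, while by the conventions of \eqref{eq:extih} one also has $IH^j_{(f/2)-c-\epsilon}(X,B) = IH^j_{(f/2)-c}(X,B) = IH^j_{(f/2)-c+\epsilon}(X,B)$; the claim is then immediate from Theorem \ref{l2hodge}, since in that case the map in its statement is the identity. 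So assume from now on that $p\in\{0,\dots,f+1\}$ is an integer.

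For the main case, string together three natural isomorphisms. By Lemma \ref{PD}, the weighted star operator gives
\[
*_c:\ \mathcal{IH}^j_{(f/2)-c-\epsilon}(M,g_{fc})\ \xrightarrow{\cong}\ \mathcal{IH}^{n-j}_{(f/2)+c+\epsilon}(M,g_{fc}).
\]
Since $-c$ lies in the same admissible range of weights as $c$, Lemma \ref{GI2} applied with $-c$ in place of $c$, in degree $n-j$, gives
\[
\mathcal{IH}^{n-j}_{(f/2)+c+\epsilon}(M,g_{fc})\ \cong\ IH^{n-j}_{(f/2)+c+\epsilon}(X,B).
\]
Finally, generalized Poincar\'e--Lefschetz duality for the (extended) intersection cohomology of $X$ identifies
\[
IH^{n-j}_{(f/2)+c+\epsilon}(X,B)\ \cong\ IH^{j}_{f-((f/2)+c+\epsilon)}(X,B)\ =\ IH^{j}_{(f/2)-c-\epsilon}(X,B),
\]
because in the normalization of \eqref{eq:extih} the perversity complementary to $p$ on the $n$-dimensional pseudomanifold $X$ with link fibre $F$ of dimension $f$ is $f-p$. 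Composing these three displays proves the lemma, and since each step is natural the composite is compatible with the comparison maps $IH^*_{p-1}(X)\to IH^*_{p}(X)$ that reappear in Theorem \ref{les}.

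\emph{The main obstacle} is the third step: pinning down the correct form of Poincar\'e--Lefschetz duality for $IH^*_p(X,B)$ in the shifted convention \eqref{eq:extih}. One must check that the dual of $p$ is exactly $f-p$ (and not $f\pm 1-p$), verify compatibility with the maps $IH^*_{p-1}(X)\to IH^*_p(X)$, and handle the boundary perversities: for $p\le 0$ or $p>f$ the duality is ordinary Lefschetz duality $H^*(X,B)\cong H^{n-*}(X-B)$, and the interior and extreme regimes are reconciled by the fact that $H^{f+1}(F)=0$ forces $IH^*_f(X)\cong IH^*_{f+1}(X)=H^*(X-B)$. One remark on the obvious alternative, copying the proof of Lemma \ref{GI2} with $d$ and $\delta_{fc,c}$ swapped: this does \emph{not} go through verbatim, because by Theorems \ref{regfc} and \ref{regfclap} the extra extended harmonic forms in $d(null(\Delta_{fc,c}^{j-1}))$ carry a leading term $\tfrac{dx}{x}\wedge v$ with $v\in\calH^*(B,\calK^{(f/2)-c})$, which places them in $x^{c-\epsilon}L^2\Omega^*(M,g_{fc})$ but not in $x^{c+\epsilon}L^2\Omega^*(M,g_{fc})$; hence they do not define classes in the complex $WC^*_{fc}(M,c+\epsilon)$ that computes $IH^*_{(f/2)-c-1}(X,B)$, and one would first have to realize that group by a complex of $x^{c-\epsilon}L^2$ forms cut out by a boundary condition on the leading term. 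The $*_c$-route above avoids this difficulty.
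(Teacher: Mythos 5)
Your proposal is correct and follows essentially the same route as the paper: apply the Poincar\'e duality of Lemma \ref{PD} to pass to $\mathcal{IH}^{n-j}_{(f/2)+c+\epsilon}(M,g_{fc})$, invoke Lemma \ref{GI2} with $-c$ in place of $c$, and finish with Poincar\'e duality for intersection cohomology. The paper's proof is exactly this three-step composition (stated more tersely, without your added discussion of the non-integer case or the duality conventions), so no further comparison is needed.
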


\begin{proof}[Proof of Lemma \ref{GI1}]
We can do this formally as follows.  By Lemma \ref{PD}, 
\[
\mathcal{IH}^j_{(f/2)-c-\epsilon}(M,g_{fc}) \cong \mathcal{IH}^{n-j}_{(f/2)+c+\epsilon}(M,g_{fc}).
\]
By Lemma \ref{GI2}, the space on the right is isomorphic to $IH^{n-j}_{(f/2)+c+\epsilon}(X,B)$, 
which by Poincar\'e duality for intersection cohomology is isomorphic to 
$IH^{j}_{(f/2)-c-\epsilon}(X,B)$, as desired. 

\end{proof}

%%%%%%%%%%%%%%%%%%%%%%%%%%%%%%%%%%%%%%%%%%%%
\section{Proof of Theorem \ref{les}}
In order to prove Theorem \ref{les}, we need to show that in the geometrically flat setting, the $f/2-c-\epsilon$ and $f/2-c+\epsilon$
perversity intersection cohomologies fit into the indicated long exact sequence.  Additionally, to realise the 
map from the spaces of extended harmonic forms, we also need to show that we can calculate the $f/2-c-\epsilon$ perversity intersection cohomology from a different complex of forms.  

\subsection{Cochain complexes for relative and absolute cohomology}

Before we expand the possible complexes for calculating intersection cohomologies of various perversities, it is instructive to collect in one place various complexes that can calculate the corresponding cohomologies in the case when $f=c=0$, in which case $X$ is a manifold with boundary $B$ and $IH^*_{\frac{f}{2}-c}(X,B) \cong H^*(X-B)$
and $IH^*_{\frac{f}{2}-c-1}(X,B) \cong H^*(X,B)$.  Although some of the complexes used to calculate these cohomology groups are considered in other papers, some
are new.  In addition, there is not any place so far in the literature where all of the various complexes are laid out together for consideration.  Thus it is independently useful to record these here.

To avoid confusion later, we will refer here to the manifold
with boundary $X=\olM$.  In this case, we know that $H^*(\olM) \cong H^*(\olM - \partial \olM) = H^*(X-B)$.  We will start
by considering $H^*(\olM)$, which may be calculated from the complex of smooth forms on $\olM$.  Consider the following 
exact sequence of complexes:
\begin{equation}
\label{cx1}
0 \to \Omega_{0}(\olM) \stackrel{inc}{\longrightarrow} \Omega(\olM) \stackrel{i^*_{\partial \olM}}{\longrightarrow} \Omega(\partial \olM) \to 0,
\end{equation}
where $i^*_{\partial\olM}$ is pullback to the boundary induced from the inclusion $i_{\partial\olM}:\partial\olM \to \olM$
and $\Omega_0(\olM)$ is the kernel of this map.  This induces a long exact sequence on cohomology, which by the 
five lemma is the relative cohomology exact sequence.  Thus the complex 
\[
\Omega_0(\olM) := \{\omega \in \Omega(\olM) \mid i^*_{\partial\olM} \omega =0\}
\]
calculates relative cohomology.

Now it is well known that $H^*(\olM) \cong H^*(M)$, where the right side is calculated from the much larger complex
of smooth forms on the open manifold $M$.  The isomorphism of these two complexes comes from identifying
$\olM$ with a deformation retract of $M$ to $M_s:= \olM - \left([0,s) \times Y\right)$.  When we chase through the isomorphisms,
we get that the maps in the relative cohomology long exact sequence are then induced from the short exact sequence
of complexes:
\begin{equation}
\label{cx2}
0 \to \Omega_s(M) \stackrel{inc}{\longrightarrow} \Omega(M) \stackrel{i^*_{s}}{\longrightarrow} \Omega(Y) \to 0,
\end{equation}
where $i^*_s$ is pullback to the submanifold $Y \times \{s\}$ induced from the inclusion $i_{s}:Y\cong (Y \times \{s\}) \to M$
and $\Omega_s(\olM)$ is the kernel of this map.  Thus relative cohomology may be calculated from any
of these complexes.  We can push this even further to get the following new lemma.

\begin{lemma}\label{rel0cx}
Consider the subcomplex
\[
\Omega_0^*(M):= \{ u \in \Omega^*(M) \mid \lim_{s \to 0} i^*_s u =0 \},
\]
where we fix an identification of the collar neighborhood $Y\times (0,1)$ and a metric on $Y$ and define the limit using the 
$L^2$ norm on forms over $Y$.  Call its cohomology
$H_0^*(M)$.  Then $H_0^*(M) \cong H(\olM,\partial\olM)$.
\end{lemma}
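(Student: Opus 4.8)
The plan is to show that the inclusion $\Omega_0^*(M) \hookrightarrow \Omega^*(M)$, together with the restriction-to-a-slice maps from the previous discussion, exhibits $\Omega_0^*(M)$ as yet another complex in the family calculating relative cohomology, by comparing it with the complex $\Omega_s(M)$ from the short exact sequence \eqref{cx2}. First I would set up the obvious short exact sequence of complexes
\[
0 \to \Omega_0^*(M) \stackrel{inc}{\longrightarrow} \Omega^*(M) \stackrel{\rho}{\longrightarrow} Q^* \to 0,
\]
where $\rho(u) := \lim_{s\to 0} i^*_s u$ (in the $L^2$ norm on $\Omega^*(Y)$, using the fixed collar identification and metric on $Y$) and $Q^*$ is the image complex. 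The content of the lemma is then two things: that this limit exists for a subcomplex of forms large enough to compute $H^*(M)$, and that $Q^*$ has the cohomology of $\Omega^*(Y)$, so that the associated long exact sequence matches the relative cohomology sequence coming from \eqref{cx2}.

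The key steps, in order, would be: (1) Show that $\rho$ is a chain map where defined, i.e. $d$ commutes with $\lim_{s\to0} i^*_s$ — this is because $i^*_s$ is a chain map for each fixed $s$ and $d$ on $Y$ is closed, so if $i^*_s u \to \alpha$ and $i^*_s du = d_Y i^*_s u \to d_Y\alpha$ in $L^2$ then $\rho(du) = d_Y \rho(u)$. (2) Show $\Omega_0^*(M) \supseteq \Omega_s(M)$ for every $s>0$, since any form vanishing identically near the boundary certainly has zero limiting slice value; hence $H_0^*(M)$ receives a map from $H^*_s(M) \cong H^*(\olM,\partial\olM)$, and conversely, the deformation-retraction argument used for \eqref{cx2} shows every class in $H_0^*(M)$ has a representative supported away from the boundary (push a form with $\rho(u)=0$ off the boundary using the collar and a cutoff, controlling the error by the smallness of the slice norms). (3) Identify $Q^*$: any smooth form on $Y$ arises as $\rho(u)$ for some $u\in\Omega^*(M)$ (extend it as an $x$-independent form on the collar and cut off), and a Poincaré-lemma/retraction argument shows the inclusion $\Omega^*(Y)\hookrightarrow Q^*$ — or rather a chain-homotopy equivalence between them — induces an iso on cohomology. (4) Finally, apply the five lemma to the morphism of long exact sequences comparing the sequence above with the one induced by \eqref{cx2}, using that the middle terms both compute $H^*(M)$ and the quotient terms both compute $H^*(Y)$, with compatible maps; conclude $H_0^*(M) \cong H^*(\olM,\partial\olM)$.

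The main obstacle I expect is step (2)–(3), specifically making the "limit in the $L^2$ norm" bookkeeping rigorous: one must be careful that $\Omega_0^*(M)$ as defined is large enough (the limit need not exist for a completely arbitrary smooth form on $M$), yet still closed under $d$ and still large enough to carry all of $H^*(M)$. The cleanest route is probably to restrict attention throughout to forms that are smooth up to the boundary of $\olM$ after the collar identification — for these the limit $i^*_0 u$ is just the honest pullback to $\partial\olM$, and $\Omega_0^*(M)$ then visibly contains $\Omega_0(\olM)$ and maps onto it — and then invoke the already-established equivalences $H^*(\olM)\cong H^*(M)$ and $H^*(\olM,\partial\olM)\cong H^*_s(M)$ to transfer the result. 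The $L^2$-limit formulation is only needed so that the same definition applies verbatim in the singular/weighted setting later; once the smooth-up-to-the-boundary case is in hand, a density/continuity argument upgrades it.
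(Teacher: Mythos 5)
Your overall strategy (compare against the standard relative cohomology sequence and invoke the five lemma) is the same as the paper's, but the mechanism you build it on has a gap at its foundation: the sequence $0 \to \Omega_0^*(M) \to \Omega^*(M) \to Q^* \to 0$ is not a short exact sequence of complexes, because $\rho(u) = \lim_{s\to 0} i^*_s u$ is not defined on all of $\Omega^*(M)$ --- for a general smooth form on the open manifold the $L^2$ limit of the slices need not exist --- so $\Omega_0^*(M)$ is not the kernel of a surjection of $\Omega^*(M)$ onto a quotient complex, and the snake-lemma long exact sequence you want to compare is simply not available. You flag this difficulty, but your proposed fix (restrict to forms smooth up to the boundary of $\olM$, then ``upgrade by density'') sidesteps rather than solves the real problem, which is to control closed forms satisfying only the weak $L^2$-limit condition: nothing forces such a form to be approximable by forms smooth up to the boundary, and $d_Y$ is not $L^2$-continuous, so the density argument does not go through. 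The paper instead constructs the long exact sequence by hand, and the two ingredients it uses are precisely the ones absent from your outline: (a) the explicit chain homotopy $R_{t,s}(u) = \int_t^s F_x^*(dx \lrcorner u)\,dx$, which shows that for closed $u$ the class $i^*_s[u] \in H^*(Y)$ is independent of $s$, and which lets one write $u = d_Y\alpha + \frac{dx}{x}\beta$ on the end and correct by $d(\chi\alpha)$ to produce a representative in $\Omega_0^*(M)$; and (b) the observation that the $L^2$ norm of the harmonic projection of $i^*_s u$ is bounded by $\|i^*_s u\|_{L^2}$, so that if the slices tend to $0$ in $L^2$ then the $s$-independent class $i^*_s[u]$ must vanish. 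Point (b) is the only place the $L^2$-limit formulation does real work, and without it you cannot prove that the image of $H_0^*(M) \to H^*(M)$ lies in the kernel of $i^*_s$.

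A smaller but related issue: your step (1) tacitly assumes $\Omega_0^*(M)$ is closed under $d$ via $i^*_s(du) = d_Y i^*_s u$, but $L^2$ convergence of $i^*_s u$ to $0$ does not imply $L^2$ convergence of $d_Y i^*_s u$ to anything; making $\rho$ a chain map on a genuine subcomplex again requires either the harmonic-projection argument or an explicit description of the forms involved on the collar, not just formal commutation of $d$ with the limit.
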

\begin{proof}
We prove this again using the relative cohomology long exact sequence and the five lemma.
First we need to show exactness at $H^*(M)$ in the long exact sequence.
The complex $\Omega_0^*(M)$ is a sub complex of $\Omega^*(M)$, thus there is an induced inclusion map on cohomology.
We want to show that the image of this map is equal to the kernel of $i^*_s$.  
The homotopy used for the proof of $H^*(\olM) \cong H^*(M)$ gives that
if $[\omega] \in H^*(M)$ and $i_{s}[\omega]=[0] \in H^*(Y)$ for some $s$, then in fact $i_{s}[\omega]$ vanishes for all $s \in (0,1]$.
This is because if $F_s$ is a deformation retract of $M$ onto $M_s$ for $s \in (0,1]$, then $F^*_s u$
is a form on $\olM$ given by restriction of $M$ to $M_s$ and identification of $M_s$ with $\olM$.
The chain homotopy is given by:
\[
R_{t,s}(u):= \int_t^s F_x^*(dx \lrcorner u)\, dx,
\]
and $F_s^*(u) - F_t^*(u) = dR_{t,s}u + R_{t,s}(du)$ and $i^*_s = i^*_{\partial\olM} F^*_s$.
So if $u$ is closed and $i^*_t u =0$ for some $t$, then
\[
i_s^* u = i_s^* u - i^*_t u = i^*_{\partial\olM}(F_s^*(u) - F_t^*(u)) =  i^*_{\partial\olM} (dR_{t,s}u) = d_{Y}i^*_{\partial\olM} (R_{t,s}u).
\]
Thus $i^*_s u$ is exact for all $s \in(0,1]$, and $0= i^*_s[u] \in H^*(Y)$ for all $s$.  We can thus write $u$ on the end as
\[
u= d_Y \alpha + \frac{dx}{x} \beta
\]
for $\alpha,\beta$ smooth families of forms on $Y$.
Now let $\chi$ be a cutoff function supported near $\partial \olM$.
Then $i_s^*(u)(u - d\chi \alpha)=0$ for all $s$, so in particular, $\lim_{s\to 0} i_s^*(u)=0$,
and $[u] = [u - d\chi \alpha]$.  This means that the kernel of $i^*_s$ is contained in the image of $H_0^*(M)$.

\smallskip
We next need to show that the image of $H_0^*(M) \to H^*(M)$ is contained in the kernel of $i^*_s$.  To do this, we define a metric on $H^*(\partial M)$.
This comes from any fixed metric on $\partial M$, using the 
$L^2$ norm on the space of harmonic forms, which is the same as the metric given on cohomology by taking the minimum
$L^2$ metric for any form representing a given class.
Then since the restriction map is constant in $s$ on cohomology, and the $L^2$ norm of the harmonic
projection is always bounded by the $L^2$ norm of the original form, we get that if $u \in \Omega^*(M)$
is closed and $\lim_{s \to 0} i^*_s u =0$, then actually $i^*_s[u]=0$ for all $s$.  Thus it is in the kernel
of the map $i^*_s$ as required.

\smallskip

In order to check exactness at $H_0^*(M)$, we first need to consider the boundary map.  The $s$-complex boundary map takes a closed form 
$\alpha \in \Omega^{j-1}(\partial \olM)$ and extends it to any form $\tilde{\alpha}$ where $i^*_s\tilde{\alpha} =\alpha$.
Then $\delta[\alpha]_{\partial \olM} = [d\tilde{\alpha}] \in H_s^*(M)$.  Now
we need it to extend $\alpha$ to a form $d\tilde{\alpha}$ where $\lim_{s \to 0} i^*_s \tilde{\alpha}=0$.  We can do 
this by taking the form $\tilde{\alpha}:= \chi \pi^*\alpha$ where $\pi:U \to \partial M$.  Since $\tilde\alpha$ is 
actually in $\Omega^{j-1}(M)$, we get that $i^*_s (d\chi \pi^*\alpha) = d (i^*_s(\chi \pi^*\alpha)) = d\alpha =0$ for all $s\in[0,1/2]$,
and thus also the limit works.

Now we can check the exactness at  $H_0^*(M)$. If $u \in \Omega^j_0(M)$
and $u = dw$ for $w \in \Omega^{j-1}(M)$, then as in the usual exact sequence, $[u] = \delta[i^*_s w]$, thus the kernel
of the map to $H^j(M-\partial M)$ is contained in the image of $\delta$.  Finally, suppose that $[u] = \delta[\alpha]$.
By definition of $\delta$, this means $u = d\tilde{\alpha}$ for some $\alpha \in \Omega^{j-1}(M)$.  This means that 
the image of $\delta$ is in the kernel of the map induced by inclusion of complexes.
\end{proof}

Finally, we would like to combine what we know about relative and absolute cohomology for the manifold $M$
using smooth forms and what we know about them using forms which are in $L^2$ with respect to a b-metric,
which for the purposes of this paper we may take to be a metric that on the end of $M$ has the form
\[
g_b = \frac{dx^2}{x^2} + g_Y.
\]
We can start with the complex of conormal $x^cL^2$ forms over $M$ with respect to $g_b$, which is 
defined by:
\[
x^cL^2_{con}\Omega^*(M,g_b) := \{ \omega \in \Omega^*(M) \mid \, ||x^{-c} V_1 \cdots V_j \omega||_{L^2_g}< \infty \,
\]
\[
\hspace{1in} \forall \, V_i \mbox{ bounded vector fields with respect to the metric } g_b\}.
\]
Because we have near the boundary that $d_M = \frac{dx}{x} \wedge x\partial_x + d_Y$, where $\frac{dx}{x}$ is 
a pointwise unit length 1-form and $x\partial_x$ is a unit length vector field, the conormality condition 
ensures that this is a complex.  If $\epsilon>0$, then the cohomology of this complex is 
\[
L^2H_{con}^*(M,g_b,-\epsilon) \cong H^*(M),
\]
as is proved, for example, in \cite{Me-aps}.  The complex $x^{-\epsilon}L^2_{con}\Omega^*(M,g_b)$ is a subcomplex of $\Omega^*(M)$, 
and the same map $i^*_s$ can be defined to $H^*(Y)$.  We can use this to prove a new $L^2$ version of Lemma
\ref{rel0cx}.
\begin{lemma}\label{conrel0cx}
Define the subcomplex
\[
x^{-\epsilon}L^2_{0}\Omega^*(M,g_b):= \{ \omega \in x^{-\epsilon}L^2\Omega_{con}^*(M,g_b) \mid \,\lim_{s\to 0} i^*_s(\omega)=0 \}.
\]
Then its cohomology is isomorphic to relative cohomology, i.e., $L^2H^*_0(M,g_b,-\epsilon) \cong H^*_0(M)$.
\end{lemma}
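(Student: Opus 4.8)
The plan is to reproduce the argument of Lemma~\ref{rel0cx} with the smooth deRham complex $\Omega^*(M)$ replaced throughout by the conormal complex $x^{-\epsilon}L^2_{con}\Omega^*(M,g_b)$, using the isomorphism $L^2H^*_{con}(M,g_b,-\epsilon)\cong H^*(M)$ from \cite{Me-aps} in the role previously played by $H^*(M)\cong H^*(\olM)$, and then comparing the two resulting long exact sequences by the five lemma. First I would note that for any fixed $s\in(0,1/2)$ the restriction map $i^*_s$ is well defined on $x^{-\epsilon}L^2_{con}\Omega^*(M,g_b)$: conormality with respect to the $b$-structure forces elements to be smooth away from $x=0$, so they restrict to the hypersurface $\{x=s\}\cong Y$, and since $d_M=\frac{dx}{x}\wedge x\partial_x+d_Y$ the map $i^*_s$ is a chain map whose kernel contains $x^{-\epsilon}L^2_0\Omega^*(M,g_b)$ and which is surjective at the chain level onto $\Omega^*(Y)$ (extend a form on $Y$ by a cutoff times its pullback, which is smooth on $\olM$, hence conormal $L^2$).

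Next I would build the long exact sequence
\[
\cdots\to L^2H^j_0(M,g_b,-\epsilon)\to L^2H^j_{con}(M,g_b,-\epsilon)\stackrel{i^*_s}{\longrightarrow}H^j(Y)\stackrel{\partial}{\longrightarrow}L^2H^{j+1}_0(M,g_b,-\epsilon)\to\cdots
\]
exactly as in Lemma~\ref{rel0cx}. For exactness at $L^2H^*_{con}$ one uses the chain homotopy $R_{t,s}$ from that proof: checking that it preserves conormal $x^{-\epsilon}L^2$ regularity gives, for a closed conormal $L^2$ form $u$ with $\lim_{s\to0}i^*_s u=0$, that $[i^*_s u]=[i^*_t u]$ in $H^*(Y)$ for all $s,t$, hence $i^*_s[u]=0$; alternatively the $L^2$-continuity argument of Lemma~\ref{rel0cx} (restriction constant in $s$ on cohomology, harmonic projection norm-nonincreasing) gives the same conclusion. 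Conversely, if $u$ is closed and $[i^*_s u]=0$ in $H^*(Y)$, one writes $u|_{x<1/2}=d_Y\alpha+\frac{dx}{x}\wedge\beta$ with $\alpha,\beta$ conormal $x^{-\epsilon}L^2$ families of forms on $Y$, the primitive $\alpha$ being produced by a fibrewise Hodge--deRham parametrix on $Y$, which commutes with $x\partial_x$ and is bounded on $L^2(Y)$ so that conormality is preserved, then chooses a cutoff $\chi\equiv1$ near $x=0$ and replaces $u$ by $u-d(\chi\alpha)\in x^{-\epsilon}L^2_0\Omega^*(M,g_b)$, which is cohomologous to $u$ in the conormal complex. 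The connecting map and exactness at $H^*(Y)$ and at $L^2H^*_0$ are handled verbatim from Lemma~\ref{rel0cx}, extending a closed form $\alpha$ on $Y$ to $\chi\pi^*\alpha$, which is smooth on $\olM$ and whose differential lies in $x^{-\epsilon}L^2_0\Omega^*(M,g_b)$.

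Finally, the inclusions of complexes $\Omega^*_0(M)\hookrightarrow x^{-\epsilon}L^2_0\Omega^*(M,g_b)$ and $\Omega^*(M)\hookrightarrow x^{-\epsilon}L^2_{con}\Omega^*(M,g_b)$, together with the identity on $\Omega^*(Y)$, induce a morphism from the long exact sequence of Lemma~\ref{rel0cx} to the one above, commuting with the maps $i^*_s$ and the connecting homomorphisms. The middle vertical arrow is the isomorphism $H^*(M)\cong L^2H^*_{con}(M,g_b,-\epsilon)$ and the right-hand one is the identity, so the five lemma shows $H^*_0(M)\to L^2H^*_0(M,g_b,-\epsilon)$ is an isomorphism, which is the assertion. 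The step I expect to be the real work is the verification in the second paragraph that $R_{t,s}$ and the $d_Y$-primitive construction actually land in, and give null-homotopies within, the conormal $x^{-\epsilon}L^2$ complex --- i.e.\ that conormality in $x\partial_x$ survives these operations and that the $s\to0$ limits behave as in the smooth case; granting that bookkeeping, everything else is a transcription of Lemma~\ref{rel0cx} plus one application of the five lemma.
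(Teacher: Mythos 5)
Your proposal is correct and follows essentially the same route as the paper: run the relative long exact sequence through the five lemma and check that the boundary map, the chain homotopy $R_{t,s}$, and the $d_Y$-primitive all preserve conormal $x^{-\epsilon}L^2$ regularity (the paper's proof verifies exactly this, noting that $\int_t^s i_x^* u\,dx = \mathcal{O}(x^{1-\epsilon})$ is again conormal and in the correct weighted space). The only cosmetic difference is that you produce the tangential primitive via a fibrewise Hodge--deRham parametrix on $Y$ rather than reading it off the integral homotopy formula, which changes nothing essential.
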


\begin{proof}
Again, we use the five lemma and the relative long exact sequence.  All we need to do is check in the proof
of Lemma \ref{rel0cx} that things descend to these subspaces.  First we can check the boundary map.
We have for $\alpha \in H^*(Y)$ that $\delta[\alpha] = [d\chi \pi^* \alpha]$.  This form has constant pointwise
norm in the b-metric on the end, so it is in $x^{-\epsilon} L^2$.  Also, since $\alpha$ and $\chi$ are smooth,
it is conormal.  Finally, it satisfies the vanishing condition for $i^*_s$.  Thus the image of this map lands
in $L^2H^*_0(M,g_b,-\epsilon)$.  The proof of exactness of the long exact sequence at $L^2H^*_0(M,g_b,-\epsilon)$
now follows as before.

When we check the exactness at $L^2H_{con}^*(M,g_b,-\epsilon)$, we just need to check that
the form $\alpha$ constructed via the homotopy is in the conormal complex.  We basically
have that 
\[
\alpha(s) = \int_t^s i^*_xu \, dx,
\]
where by conormality, we know that $i^*_x u = \mathcal{O}(x^{-\epsilon})$.  When we integrate, we will get something
$ \mathcal{O}(x^{1-\epsilon})$, which is again conormal and in the correct weighted space. 
So the remainder of the argument follows as before.
\end{proof}

%%%%%%%%%%%%%%%%%%%%%%%%%%%%%%%%%%%%%%%%%%%%%%%%%%

\subsection{Cochain complexes for intersection cohomology}
We would like to define complexes that calculate intersection cohomology analogous to the complexes 
for relative and absolute cohomology defined in the previous subsection.

For the proof of Theorem \ref{exthodge}, we need a complex analogous to $x^cL^2_{0}\Omega^*(M,g_b)$ for relative
cohomology.  We can't define this in general--it will require that 
the link bundle $Y \to B$ is geometrically flat.  Recall that geometrically flat means that there is some fixed
metric $g_F$ on $F$ so that $Y$ is flat with respect to $\mbox{Isom}(F,g_F)$.  We may then endow $Y$ with the metric
such that for any local trivialisation $V \times F \to V$, the metric is a product of $g_F$ on the $F$ factor 
and a metric on $V \subset B$ restricted from some (again fixed) choice of metric $g_B$ on $B$.
We have a Hodge theorem on $F$ with respect to the fixed metric $g_F$:
\[
\Omega^j(F) = d\Omega^{j-1}(F) \oplus \delta\Omega^{j+1}(F) \oplus \mathcal{H}^j(F),
\]
where $\mathcal{H}^j(F)$ is the space of harmonic $j$ forms on $F$ and the $\oplus$ are orthogonal
sums with respect to the $L^2$ inner product on $\Omega^j(F)$ induced from the metric $g_F$.

The fact that the metric $g_{fc}$ is geometrically flat means that the Hodge decomposition
is preserved by the transition functions for our charts on the end of $M$, so it makes sense to talk
about the corresponding decomposition of forms on the end.  Further,
the vector fields $x\partial_x$ and 
any vector field lifted from the base preserve the Hodge decomposition on the fibres.

Define the complex
\[
\Omega_{c\tau, p}^i(F)= \left\{
\begin{array}{ll}
\{0\} & j \leq p-1 \\
\mbox{ker}(\delta_F) & j=p \\
\Omega^j(F) & j > p.
\end{array} \right.
\]
Now define 
\begin{equation}
\Pi_p:\Omega^*(F) \to \Omega_{c\tau, p}^*(F),
\end{equation}
where for $\omega = d_F\alpha + \delta_F \beta + \gamma \in \Omega^j(F)$,
\[
\Pi_p (\omega) = \left\{
\begin{array}{ll}
0& j\leq p-1 \\
\delta_F \beta + \gamma & j= p\\
\omega & j>p.
\end{array}
\right.
\]
Note that $d_F\Pi_p = \Pi_{p} d$, since if $j< p-1$, the $\Pi_p$ on both sides are just 0.  If $j=p-1$, then on the left, $\Pi_p=0$ 
and on the right, we use the fact that uniqueness of the Hodge decomposition in degree $p$ means $\Pi_p d_F=0$.
If $j=p$, then for $\omega = d_F\alpha + \delta_F \beta + \gamma$, we get on the right $d_F(\delta_F \beta +\gamma)= d_F(\omega) = \Pi_p (d_F\omega)$.
Finally, if $j > p$, then $\Pi_p$ on both sides is just the identity, so it commutes with $d_F$.

We can write the following short exact sequence:
\[
0\to \Omega^*_p(F) \stackrel{inc}{\longrightarrow} \Omega^*(F) \stackrel{\Pi_p}{\longrightarrow} \Omega_{c\tau, p}^i(F) \to 0,
\]
where $\Omega^*_p(F)$ is the kernel of $\Pi_p$.  This means that
\[
\Omega^j_p(F)=\left\{
\begin{array}{ll}
\Omega^j(F) & j \leq p-1 \\
d\Omega^{p-1}(F) & j=p \\
0 & j > p.
\end{array} \right.
\]
Note that the cohomology of this complex is exactly the local calculation for $IH_p(C(F))$.

Now we extend these constructions and this lemma to $Y$.  To do this, 
we write 
\[
\Omega^*(Y) \cong \Omega^*(B,\Omega^*(F)) \cong \sum_{j=0}^f  \Omega^*(B,\Omega^j(F)).
\]
Now we can use the Hodge decomposition with respect to the metric $g_F$ on each fibre to define the subcomplex
\[
\Omega_{c\tau,p}^*(Y) := \Omega^*(B,\Omega_{c\tau,p}^*(F)).
\]
This is a subcomplex because in each trivialisation, the exterior derivative $d_Y= d_B + d_F$, and $d_B$ commutes
with the Hodge decomposition on $\Omega^*(F)$.  Similarly we get a projection map:
\[
\Pi_p:\Omega^*(Y) \to \Omega_{c\tau,p}^*(Y).
\]
This again commutes with $d_Y$, as well as with any derivatives in the base directions.

Next, we can define complexes of forms on $\olM$.  Consider the complex of smooth forms on $\olM$ that fits into the following 
short exact sequence:
\[
0 \to I_p\Omega^*_0(\olM) \stackrel{inc}{\longrightarrow} \Omega^*(\olM) \stackrel{\Pi_p\circ i^*_{\partial \olM} }{\longrightarrow} \Omega_{c\tau,p}^*(Y) \to 0.
\]
Because derivatives in the base directions commute with $\Pi_p$, the complex generates a complex of fine sheaves on $X$.
Then the Kunneth formula applies over product-type regions $U \times [0,1] \times F$ to show that the local calculation
is the correct one for $IH^*_p(X)$.  Thus the cohomology of $I_p\Omega^*_0(\olM)$ is $IH^*_p(X)$.

We can also create a short exact sequence that relates intersection forms of two different perversities.  Here we will consider
only adjacent perversities, but this may be generalised.  Define
\[
\Omega_{c\tau, p-1,p}^i(F)= \left\{
\begin{array}{ll}
0 & j \leq p-1 \\
\mbox{ker}(\delta_F) & j=p \\
d\Omega^p(F) & j =p+1\\
0 & j> p+1.
\end{array} \right.
\]
As before, extend this to $Y$ to get $\Omega_{c\tau, p-1,p}^i(Y)$.

Now define 
\begin{equation}
\Pi_{p-1,p}:\Omega^*(F) \to \Omega_{c\tau, p-1,p}^*(F),
\end{equation}
where for $\omega = d_F\alpha + \delta_F \beta + \gamma \in \Omega^j(F)$,
\[
\Pi_{p-1,p} (\omega) = \left\{
\begin{array}{ll}
0& j\leq p-1 \\
\delta_F \beta + \gamma & j= p\\
d_F\alpha & j=p+1\\
0& j>p+1.
\end{array}
\right.
\]
As before, extend this to a map from $\Omega^*(Y)$ to $\Omega_{c\tau, p-1,p}^i(Y)$.
Now we obtain the short exact sequence:
\[
0 \to I_{p-1}\Omega^*_0(\olM) \stackrel{inc}{\longrightarrow} I_p\Omega^*_0(\olM) \stackrel{\Pi_{p-1,p}\circ i^*_{\partial \olM} }{\longrightarrow} \Omega_{c\tau,p-1,p}^*(Y) \to 0.
\]
The point of this is that we now get a long exact sequence relating $IH_{p-1}^*(X)$, $IH_p^*(X)$ and $H^*(B,H^{p-1}(F))$,
which we can use together with the five lemma to obtain the complex we want for calculating $IH_{p-1}^*(X)$ in the proof 
of Theorem \ref{exthodge}.  

To do this, we will use $x^{(f/2)-p - \epsilon}L^2\Omega_{con}^*(M,g_{fc})$ in our long exact sequence to calculate
$IH^*_p(X)$, and we want to prove the following lemma.

\begin{lemma}\label{lemma6.3}
The cohomology of the complex:
\[
x^{(f/2)-p - \epsilon}L^2\Omega_{0}^*(M,g_{fc}):=\{ \omega \in x^{(f/2)-p - \epsilon}L^2\Omega_{con}^*(M,g_{fc}) \qquad \qquad
\]
\[
\hspace{2cm} \mid \lim_{s \to 0} \Pi_{p-1,p}\circ i^*_s \omega =0, \,  \lim_{s \to 0} \Pi_{p-1,p}\circ i^*_s d\omega=0\}
\]
is isomorphic to $IH_{p-1}^*(X)$.  Furthermore, we have the following long exact sequence on cohomology:
\[
\cdots \to H^{j-p-1}(B,H^p(F)) \stackrel{\delta}{\longrightarrow} IH_{p-1}^j(X) \stackrel{inc^*}{\longrightarrow} IH_{p}^j(X) 
\stackrel{r}{\longrightarrow} H^{j-p}(B,H^p(F)) \to \cdots,
\]
where $r=\lim_{s \to 0} \Pi_{p-1,p}\circ i^*_s$.
\end{lemma}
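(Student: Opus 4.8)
The plan is to argue in two stages: first, establish the long exact sequence on cohomology by combining the short exact sequence of $L^2$-complexes with the known identifications, and second, use the five lemma to deduce that the complex $x^{(f/2)-p-\epsilon}L^2\Omega_0^*(M,g_{fc})$ computes $IH^j_{p-1}(X)$. The key input is that, mirroring the construction for smooth forms on $\olM$ and for the b-metric complexes in Lemma \ref{conrel0cx}, the geometrically flat structure makes the fiberwise Hodge decomposition, and hence the projections $\Pi_p$ and $\Pi_{p-1,p}$, well-defined on forms supported near $B$, since the transition functions lie in ${\rm Isom}(F,g_F)$ and base-direction derivatives commute with $\Pi_{p-1,p}$.

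First I would set up the short exact sequence of $L^2$-complexes
\[
0 \to x^{(f/2)-p-\epsilon}L^2\Omega_0^*(M,g_{fc}) \stackrel{inc}{\longrightarrow} x^{(f/2)-p-\epsilon}L^2\Omega^*_{con}(M,g_{fc}) \stackrel{r}{\longrightarrow} \Omega_{c\tau,p-1,p}^*(Y) \to 0,
\]
with $r = \lim_{s\to 0}\Pi_{p-1,p}\circ i^*_s$, exactly paralleling the smooth $\olM$ sequence
$0 \to I_{p-1}\Omega_0^*(\olM) \to I_p\Omega_0^*(\olM) \to \Omega_{c\tau,p-1,p}^*(Y)\to 0$.
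Here one must verify surjectivity of $r$: given a closed form $\alpha$ on $Y$ lying in $\Omega_{c\tau,p-1,p}^*$, the extension $\tilde\alpha = \chi\,\pi^*\alpha$ (with $\chi$ a cutoff near $B$ and $\pi$ the collar projection) has bounded pointwise $fc$-norm weighted appropriately, so it lies in $x^{(f/2)-p-\epsilon}L^2\Omega^*_{con}$, just as in the proof of Lemma \ref{conrel0cx}; and one must check that $x^{(f/2)-p-\epsilon}L^2\Omega^*_{con}(M,g_{fc})$ indeed computes $IH^j_p(X)$, which follows from the conormal version of Lemma \ref{wcoho} (the conormal complex calculates intersection cohomology by checking the Poincar\'e lemma, as noted in the Background section) together with the fact that $x^{(f/2)-p-\epsilon}L^2 = x^cL^2$ with $c = (f/2)-p-\epsilon$ corresponds to perversity $p$ in the extended indexing \eqref{eq:extih}. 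The long exact sequence in cohomology associated to this short exact sequence then reads
\[
\cdots \to H^{j-1}(\Omega_{c\tau,p-1,p}^*(Y)) \stackrel{\delta}{\to} H^j\!\left(x^{(f/2)-p-\epsilon}L^2\Omega_0^*(M,g_{fc})\right) \stackrel{inc^*}{\to} IH_p^j(X) \stackrel{r}{\to} H^j(\Omega_{c\tau,p-1,p}^*(Y)) \to \cdots.
\]

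Next I would identify $H^j(\Omega_{c\tau,p-1,p}^*(Y))$. Since $\Omega_{c\tau,p-1,p}^i(F)$ is concentrated in degrees $p$ (where it is $\ker\delta_F$) and $p+1$ (where it is $d\Omega^p(F)$), with differential $d_F\colon\ker\delta_F\to d\Omega^p(F)$ surjective and with kernel $\mathcal{H}^p(F)$, the fiberwise cohomology of this two-term complex is $\mathcal{H}^p(F)$ placed in degree $p$. Passing to $\Omega^*(B,\Omega_{c\tau,p-1,p}^*(F))$ and using that $d_B$ preserves the fiberwise Hodge decomposition (geometric flatness), the total cohomology is $H^{*-p}(B,H^p(F))$, so $H^j(\Omega_{c\tau,p-1,p}^*(Y)) \cong H^{j-p}(B,H^p(F))$. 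Substituting this into the long exact sequence above gives precisely the stated sequence, \emph{provided} we know $H^j(x^{(f/2)-p-\epsilon}L^2\Omega_0^*(M,g_{fc})) \cong IH_{p-1}^j(X)$.

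That last isomorphism is the heart of the matter, and I expect it to be the main obstacle. The strategy is the five lemma: compare the long exact sequence just constructed to the topological long exact sequence coming from the smooth $\olM$-complexes $0\to I_{p-1}\Omega_0^*(\olM)\to I_p\Omega_0^*(\olM)\to\Omega_{c\tau,p-1,p}^*(Y)\to 0$, whose middle and right terms are already known to compute $IH_p^*(X)$ and $H^{*-p}(B,H^p(F))$, and whose left term computes $IH_{p-1}^*(X)$. One builds a map of short exact sequences of complexes—for instance by a deformation-retract / homotopy argument identifying smooth forms on a neighborhood of $B$ in $\olM$ with conormal $x^{(f/2)-p-\epsilon}L^2$ forms on $M$, exactly in the spirit of Lemma \ref{rel0cx} and Lemma \ref{conrel0cx}—inducing a commuting ladder on the two long exact sequences in which the maps on the $IH_p^*(X)$ and $H^{*-p}(B,H^p(F))$ terms are the identity (or the known isomorphisms). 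The five lemma then forces $H^j(x^{(f/2)-p-\epsilon}L^2\Omega_0^*(M,g_{fc})) \cong IH_{p-1}^j(X)$ and identifies the connecting map. The delicate points will be: checking that the chain homotopy $R_{t,s}$ and the cutoff constructions from Lemma \ref{rel0cx} stay inside the conormal $x^{(f/2)-p-\epsilon}L^2$ complex (the integration in $R_{t,s}$ improves the weight by one power of $x$, as in Lemma \ref{conrel0cx}, so this should go through), and verifying that the condition $\lim_{s\to 0}\Pi_{p-1,p}\circ i^*_s\omega = 0$ together with $\lim_{s\to 0}\Pi_{p-1,p}\circ i^*_s d\omega = 0$ is the correct $L^2$-analogue of the vanishing boundary condition defining $I_{p-1}\Omega_0^*(\olM)$—in particular that no cohomology is lost or gained by using the $L^2$-limit of the restriction rather than a genuine pullback to the boundary, which is where the extra $d\omega$ condition is needed to keep the subcomplex property and the five-lemma comparison honest.
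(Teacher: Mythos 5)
Your overall route is the same as the paper's: introduce the boundary complex $\Omega^*_{c\tau,p-1,p}(Y)$, whose cohomology is $H^{*-p}(B,H^p(F))$, relate it to the conormal weighted $L^2$ complex computing $IH^*_p(X)$ via $r=\lim_{s\to 0}\Pi_{p-1,p}\circ i^*_s$, produce the long exact sequence, and then run the five lemma against the topological sequence coming from $0\to I_{p-1}\Omega_0^*(\olM)\to I_p\Omega_0^*(\olM)\to\Omega^*_{c\tau,p-1,p}(Y)\to 0$ to identify the cohomology of the new complex with $IH^*_{p-1}(X)$. The chain homotopy $R_{t,s}$, the cutoff extension $\chi\,\pi^*\alpha$ used for surjectivity and for the connecting map, and the weight bookkeeping showing the relevant pieces lie in $x^{(f/2)-p-\epsilon}L^2\Omega^*_{con}(M,g_{fc})$ are exactly the ingredients the paper uses.

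The one step I would not let stand as written is the claim that these complexes fit into a short exact sequence of \emph{chain complexes} with $r$ as the quotient map. For an arbitrary $\omega\in x^{(f/2)-p-\epsilon}L^2\Omega^*_{con}(M,g_{fc})$ the fibre-degree-$p$ component is only $O(x^{-\epsilon})$ pointwise (conormality gives nothing better), so $\lim_{s\to 0}\Pi_{p-1,p}\circ i^*_s\omega$ need not exist; $r$ is not defined on all chains and the snake lemma cannot be invoked directly. The paper sidesteps this by never forming the chain-level quotient: it uses the homotopy identity $i^*_s\omega_p - i^*_t\omega_p = d_Y i^*(R_{t,s}\omega_p)$ to show that for \emph{closed} $\omega$ the class of $\Pi_{p-1,p}\circ i^*_s\omega$ is independent of $s$, so that $r$ is well defined on cohomology, and then verifies exactness of the long sequence at each term by hand before applying the five lemma. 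You flag the right worry (whether the $L^2$-limit condition loses or gains cohomology), but the actual repair is to define $r$ only on cocycles and prove exactness of the long sequence directly rather than deriving it from a short exact sequence of complexes. With that adjustment your argument matches the paper's.
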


\begin{proof}
First we need to check that the map on cohomology induced by \[
r=\lim_{s \to 0} \Pi_{p-1,p}\circ i^*_s\] makes sense.
The proof is similar to the corresponding proof for when we replaced the absolute complex by the $L^2$ complex
in the previous subsection.
Note that on the end, an element $\omega \in x^{(f/2)-p - \epsilon}L^2\Omega_{con}^j(M,g_{fc})$ can be written as:
\[
\omega = \sum_i \omega_i := \sum_i \omega_{t,i} + \frac{dx}{x} \omega_{n,i-1},
\]
where $\omega_{t,i}$ has fibre degree $i$ and base degree $j-i$ and $\omega_{n,i-1}$ has fibre degree $i-1$ and base degree $j-i$.
By the Leray spectral sequence for $Y \to B$, when we decompose a closed $k$-form
$\alpha$ on $Y$ by fibre and base bidegree, each component is closed.  So if $d\omega=0$, so is
$d(\omega_p)$.  Now we can apply the same chain homotopy $R_{t,s}$ argument from
before to $\omega_p$ and we get that 
\[
\Pi_{p-1,p}\circ i^*_s \omega - \Pi_{p-1,p}\circ i^*_t \omega= i^*_s \omega_p - i^*_t \omega_p = d_Y i^*(R_{t,s}\omega_p).
\]
As before, by conormality, if $\omega \in x^{(f/2)-p - \epsilon}L^2\Omega_{con}^j(M,g_{fc})$, so is
$R_{t,s}\omega_p$, so we can conclude that on cohomology classes, $\Pi_{p-1,p}\circ i^*_s$ is independent of $s$,
which means that the map $\lim_{s \to 0} \Pi_{p-1,p}\circ i^*_s: IH^*_p(X) \to H^*(B,H^{p}(F))$ makes sense
when we calculate the first space using the weighted $L^2$ complex.

Now to check exactness at $IH_p^*(X)$, we need first to show that an element of the kernel of $\lim_{s \to 0} \Pi_{p-1,p}\circ i^*_s$
is in the image of the inclusion map.  We can apply a similar argument as in the proof of Lemma \ref{rel0cx}, 
but now just to the piece $\omega_p$.  If $\lim_{s \to 0} \Pi_{p-1,p}\circ i^*_s[\omega]=0$, this means that 
$i^*_s\omega_{t,p} = d_Y\alpha$.  Since $\omega_{t,p}$ has bidegree $(j-p,p)$, we get that $\alpha=\alpha_{p-1} + \alpha_p$,
where by conormality we may assume these are $\mathcal{O}(x^{-\epsilon})$ in norm.  But for $j\leq p$, such forms
are in $x^{(f/2)-p - \epsilon}L^2\Omega_{con}^*(M,g_{fc})$, thus $[\omega] = [\omega - d\chi \alpha] \in IH^j_p(X)$.
Showing that the image of the inclusion map is in the kernel follows the identical argument as in the proof of Lemma \ref{rel0cx}.

Finally, to check exactness at the cohomology of the complex $x^{(f/2)-p - \epsilon}L^2\Omega_{0}^*(M,g_{fc})$, we again need
to understand the boundary map.  The boundary map in this setting will take a closed form 
$
\alpha \in \Omega_{c\tau, p-1,p}^j(Y)
$
to $[d\chi \tilde{\alpha}]$ for any extension $\tilde{\alpha} \in x^{(f/2)-p - \epsilon}L^2\Omega_{con}^*(M,g_{fc})$.
Recall that $\alpha \in \Omega_{c\tau, p-1,p}^j(Y)$ is of the form
\[
\alpha_{p-1} + d_F\beta_{p-1}.
\]
This means that on the end, the cutoff constant extension 
\[
d\chi \alpha = d_Y \alpha = d_F \alpha_{p-1} + d_B \alpha_{p-1} + d_Bd_F \beta_{p-1} 
\]
so $d\chi \alpha$ has a piece of fibre degree $p$, which by construction is orthogonal to the kernel of $\delta_F$ by the Hodge
decomposition on fibres.  Thus this part is in the kernel of $\Pi_{p-1,p}\circ i^*_s$ for all $s$.  The $p-1$ piece 
is automatically in the kernel of $\Pi_{p-1,p}\circ i^*_s$, and both are $\mathcal{O}(1)$, thus in particular, 
$\mathcal{O}(x^{-\epsilon})$, as required to be in the correct weighted conormal $L^2$ space.  Thus the boundary
map makes sense.  The kernel of the inclusion is in the image of the boundary map by the same argument as in 
the proof of Lemma \ref{rel0cx},  Finally, the image of the boundary map is in the kernel of the inclusion because
$\chi\alpha \in x^{(f/2)-p - \epsilon}L^2\Omega_{con}^*(M,g_{fc})$.

\end{proof}

\subsection{End of proof}
The first and last vertical isomorphisms in the diagram in Theorem \ref{les} are standard maps from harmonic
forms on a compact manifold to cohomology, but in this flat setting, we can restrict those standard maps to
forms with fibre degree $p=(f/2)-c$.  
From the proof of Theorem \ref{exthodge}, we already have the third vertical map, and we know that the last
square is commutative.  

To construct the required vertical map, we use the complex
$x^{(f/2)-p - \epsilon}L^2\Omega_{0}^*(M,g_{fc})$ from Lemma \ref{lemma6.3}
to calculate $IH^j_{(f/2)-c-\epsilon}(X,B)$.
Let 
\[\omega = d\tau + \gamma \in
d(null(\Delta_{fc,c}^{j+1})) + \calH_{L^2}^j(M,g_{fc},c).
\]
By definition, $\omega \in x^{c-\epsilon}L^2\Omega^j(M,g_{fc})$, and by Lemma \ref{extclosed}, $d\omega=0$.
We also know from Equation \ref{harmend} and the sentences following it that on the end,
\[
\omega = \frac{dx}{x} \wedge u_{11} + \omega',
\]
where $u_{11} \in \calH^{j-(f/2)+c}(B,\calH^{(f/2)-c}(F))$ is constant in $x$ and $\omega' \in x^{c+\epsilon}L^2\Omega^j(M,g_{fc})$ and is conormal.  This means that
$r(\omega)= r(\omega')$, since the pullback of any form containing a $dx$ vanishes.  Now considering
what conormal forms in $x^{c+\epsilon}L^2\Omega^j(M,g_{fc})$ look like, we find
\[
\omega' = x^{c+\epsilon -(f/2)} \sum_{i=0}^f \frac{dx}{x} \wedge x^i \omega_{1,i}' + x^i \omega_{2,i}',
\]
where each family $\omega_{k,i}'$ is bounded in $x$.  Thus
\[
r(\omega') = r\left(x^{c+\epsilon-(f/2)} x^{f/2-c} \omega_{2,(f/2)-c} \right)= r\left(x^\epsilon \omega_{2,(f/2)-c} \right)=0,
\]
as required.  Thus elements of $\mathcal{IH}^j_{(f/2)-c-\epsilon}(M,g_{fc})$ are naturally cocycles in the 
complex

\noindent
$x^{(f/2)-p - \epsilon}L^2\Omega_{0}^*(M,g_{fc})$.  

The next step is to show that this map is injective.  Because we already know that the two spaces are isomorphic
(and finite dimensional), this will imply the map is also surjective.  So suppose that $\omega$ from above
is equal to $d\eta$ for $\eta \in x^{(f/2)-p - \epsilon}L^2\Omega_{0}^{j-1}(M,g_{fc})$.  Then on the end, 
\[
\eta = \frac{dx}{x} \wedge \eta_0 + \eta',
\]
where $\eta' \in x^{(f/2)-p + \epsilon}L^2\Omega_{0}^{j-1}(M,g_{fc})$.  The fact that $d\eta = \omega$ means that 
on the end, setting leading order terms in $x$ equal, we get
\[
-d_Y \eta_0 = u_{11} + u',
\]
where $u' \in x^{(f/2)-p + \epsilon}L^2\Omega_{0}^{j-1}(M,g_{fc})$.  But $u_{11}$ is harmonic on $Y$, thus
orthogonal to $d_Y \eta_0(x)$ for all $x$ and constant.  This means actually that 
$u_{11}=0$ and $\omega \in x^{c+\epsilon}L^2\Omega^j(M,g_{fc})$.  Now we can see that
\[
||\omega||_c^2 = <\omega, d\eta>_c =0,
\]
so $\omega=0$ as required.

We define the middle map in the top row of the diagram as
\[
inc^*:d(null(\Delta_{fc,c}^{j+1})) + \calH_{L^2}^j(M,g_{fc},c) \to \delta_{fc,c}(null(\Delta_{fc,c}^{j+1})) + \calH_{L^2}^j(M,g_{fc},c)
\]
by $inc^*(d\tau + \gamma) = \gamma$.  Thus the image of this map
is $\calH_{L^2}^j(M,g_{fc},c)$.  We know from Theorem \ref{l2hodge} that 
\[
 \calH_{L^2}^j(M,g_{fc},c) \cong {\rm Im}\left(IH^j_{(f/2)-c-\epsilon}(X,B) \to IH^j_{(f/2)-c+\epsilon}(X,B)\right),
 \]
so the middle square in the diagram commutes as desired.

To finish the proof we define $\overline{\partial}$ to be the map that makes the first square of the
diagram commute--we will identify it more directly later in Theorem \ref{1.6}.

%%%%%%%%%%%%%%%%%%%%%%%%%%%%%%%%%
\section{Proof of Theorem \ref{1.6}}
The first part of Theorem \ref{1.6} is proved above in Lemma \ref{PD}.  So it remains to define the boundary
data maps and prove the properties of their images.  To define the boundary data maps, recall that 
by definition,
\[
\mathcal{IH}^j_{(f/2)-c+\epsilon}(M,g_{fc}) := \delta_{fc,c}(null(\Delta_{fc,c}^{j+1})) + \calH_{L^2}^j(M,g_{fc},c).
\]
We know from Equation \ref{harmend} and the lines after it that on the end, an element of this space
has the form
\[
\omega= u_{12} + \omega',
\]
where $u_{12} \in \calH^{j-(f/2)+c}(B, \calH^{(f/2)-c}(F))$ and $\omega' \in x^{c+\epsilon}L^2\Omega^j(M,g_{fc})$.
We define the boundary data map on $\mathcal{IH}^j_{(f/2)-c+\epsilon}(M,g_{fc})$ by
\[
{\rm bd}_{T_c} \omega := r_c(\omega)=u_{12}.
\]

Also by definition, 
\[
\mathcal{IH}^j_{(f/2)-c-\epsilon}(M,g_{fc}) := d(null(\Delta_{fc,c}^{j+1})) + \calH_{L^2}^j(M,g_{fc},c).
\]
We know from Equation \ref{harmend} and the lines after it that on the end, an element of this space
has the form
\[
\omega= \frac{dx}{x}\wedge u_{11} + \omega',
\]
where $u_{11} \in \calH^{j-(f/2)+c}(B, \calH^{(f/2)-c}(F))$ and $\omega' \in x^{c+\epsilon}L^2\Omega^j(M,g_{fc})$.
We also have that
\[
*_c:\mathcal{IH}^j_{(f/2)-c-\epsilon}(M,g_{fc}) \to \mathcal{IH}^j_{(f/2)+c+\epsilon}(M,g_{fc})
\]
is an isomorphism.  Thus we define the boundary data map on this space by 
\[
{\rm bd}_{S_c} \omega := *_F {\rm bd}_{T_c} *_c \omega = u_{11}.
\]

In fact, both of these boundary maps come from the boundary data map
\[
BD:\calH_{ext}^*(M,g_{fc},c) \to \left[\calH^{*}(B, \calH^{(f/2)-c}(F))\right]^2,
\]\[ BD(\omega) = (\omega_1, \omega_2)
\]
for $\omega$ which on the end has the form
\[
\omega = \omega_1 + \frac{dx}{x}\wedge \omega_2 + \omega'.
\]

We want to show that the images of ${\rm bd}_{T_c}$ and ${\rm bd}_{S_c}$
are complementary orthogonal subspaces of $\calH^*(B, \calH^{(f/2)-c}(F))$
and that when $c=0$, they are complementary Lagrangian subspaces for the restriction of the
intersection pairing on $H^*(Y)$.

To do this, we use a variation of the boundary pairing from Section 6.1 of \cite{Me-aps}.  We start with
the operator $d + \delta_{fc,c}$ considered as acting on forms over $M$ with values in a topological
trivial coefficient line bundle $L$ with metric carrying the weight $x^{-c}$, that is so that 
$\omega \in x^c L^2\Omega^*(M,g_{fc})$ if and only
if $\omega \in L^2\Omega^*(M,g_{fc};L)$.  Then $d + \delta_{fc,c}$ is its own formal adjoint with respect to the $L^2$ 
inner product.  Now we consider the b-operator $P_c$ obtained by restricting $d + \delta_{fc,c}$ to the space of forms
over the end $Y\times [0,1)$ whose vertical part takes values in $\calH^{(f/2)-c}(F)$.  Note that when 
$g_{fc}$ is geometrically flat, $d + \delta_{fc,c}$ preserves this space of fibre harmonic forms, so 
this makes sense.  We end up with a self-adjoint b-operator, $P_c$ acting on 
$L^2\Omega^*(B\times[0,1),g_{b};\calH^{(f/2)-c}(F)\otimes L)$,
where the bundle $\calH^{(f/2)-c}(F)\otimes L$ carries a weight of $x^{f/2}$ in its metric.

Now we can apply the theorems from \cite{Me-aps} to $P_c$.  The formal null space associated to
the indicial root $\frac{f}{2}$, which is the critical root for $\omega \in L^2\Omega^*(M,g_{fc};L)$, 
is the space 
\[
F(P_c,\frac{f}{2}) = \{\omega_1 + \frac{dx}{x} \wedge \omega_2 \mid \omega_i \in \calH^{*}(B, \calH^{(f/2)-c}(F))\}.
\]
By Proposition 6.2 of \cite{Me-aps}, we can define a nondegenerate bilinear form $iB(u,v)$ on $F(P_c,\frac{f}{2})$ 
by
\[
iB(u,v) := \int_{B\times[0,1)} P_c(\chi u) \wedge *_c \chi v - \chi u \wedge *_c P_c(\chi v),
\]
where $\chi$ is a cutoff function supported near $x=0$ and $*_c$ is the weighted Hodge star operator from $M$
restricted to the subspace of $f/2-c$ degree fibre harmonic forms.  Furthermore, $iB$ is independent of the choice of $\chi$.
When we carry out integration by parts here, we find that 
\[
iB(u,v)=<u_1,v_2>_Y - <u_2,v_1>_Y.
\]
From this formula, we can see that this is a symplectic pairing on $F(P_c,\frac{f}{2}) \cong \left[\calH^{*}(B, \calH^{(f/2)-c}(F))\right]^2$.
\begin{lemma}\label{lagrangian}
The subspace ${\rm Im}(BD) \subset  F(P_c,\frac{f}{2})$
is a Lagrangian subspace under this pairing.
\end{lemma}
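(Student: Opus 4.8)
The plan is to establish the two defining properties of a Lagrangian subspace of the symplectic space $\left(F(P_c,\frac{f}{2}),iB\right)$: that ${\rm Im}(BD)$ is \emph{isotropic}, and that $\dim{\rm Im}(BD)=\tfrac{1}{2}\dim F(P_c,\frac{f}{2})$. Since $iB$ is nondegenerate and all the spaces involved are finite dimensional, these two facts together give the claim. I may assume throughout that $p=(f/2)-c$ is an integer, since otherwise $\calH^{(f/2)-c}(F)=\{0\}$, the space $F(P_c,\frac{f}{2})$ is trivial and the statement is vacuous. All dimensions below are total dimensions summed over all form degrees, since $d+\delta_{fc,c}$, and hence $BD$, does not respect the grading.

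\emph{Isotropy.} Let $\omega,\omega'\in\calH_{ext}^*(M,g_{fc},c)$. By Lemma \ref{extclosed} both are closed and coclosed, so $(d+\delta_{fc,c})\omega=(d+\delta_{fc,c})\omega'=0$, and by Theorem \ref{regfc} each splits on the end as its leading indicial term --- an element of $F(P_c,\frac{f}{2})$, namely $BD(\omega)$ resp.\ $BD(\omega')$ --- plus a remainder in $x^{c+\epsilon}L^2\Omega^*(M,g_{fc})$. Because $g_{fc}$ is geometrically flat, $d+\delta_{fc,c}$ preserves the fibre-harmonic subspace over the end, so the fibre-harmonic part of $\omega$ there is a genuine solution of $P_c(\cdot)=0$ with leading term $BD(\omega)$, and similarly for $\omega'$. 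The boundary pairing identity of Proposition 6.2 of \cite{Me-aps}, applied to the self-adjoint b-operator $P_c$, then identifies $iB(BD(\omega),BD(\omega'))$ with the defect in the integration by parts of $\langle(d+\delta_{fc,c})\omega,\omega'\rangle-\langle\omega,(d+\delta_{fc,c})\omega'\rangle$, i.e.\ with the limit as $s\to0$ of the boundary term over $\{x=s\}$. As both interior terms vanish, $iB(BD(\omega),BD(\omega'))=0$, so ${\rm Im}(BD)$ is isotropic. Making this boundary-pairing identity rigorous in the present weighted, fibred setting --- in particular checking that the $x^{c+\epsilon}L^2$ remainders and the cutoff error terms contribute nothing in the limit --- is the main obstacle, and it relies on the asymptotic expansions from \cite{GHAalto} and \cite{GH2} together with the machinery of \cite{Me-aps}.

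\emph{Dimension count.} I would use the decomposition $\calH_{ext}^*(M,g_{fc},c)=dS_c\oplus\delta_{fc,c}T_c\oplus\calH_{L^2}^*(M,g_{fc},c)$ from Theorem \ref{exthodge}. By Theorem \ref{regfc}, the forms in $\calH_{L^2}^*(M,g_{fc},c)$ lie in $x^{c+1+\epsilon}H^\infty_{split}$, so their leading indicial terms vanish and $BD$ annihilates $\calH_{L^2}^*(M,g_{fc},c)$. Conversely, if $BD(d\tau+\delta_{fc,c}\sigma)=0$ then $d\tau+\delta_{fc,c}\sigma$ is an extended harmonic form with vanishing leading term, hence lies in $x^{c+\epsilon}L^2\Omega^*(M,g_{fc})\subset x^cL^2\Omega^*(M,g_{fc})$ and therefore in $\calH_{L^2}^*(M,g_{fc},c)$; since this intersects $dS_c\oplus\delta_{fc,c}T_c$ only in $\{0\}$, we get $d\tau+\delta_{fc,c}\sigma=0$. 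So $BD$ is injective on $dS_c\oplus\delta_{fc,c}T_c$ and $\dim{\rm Im}(BD)=\dim dS_c+\dim\delta_{fc,c}T_c$. Theorem \ref{exthodge} gives $\dim dS_c=\dim IH^*_{p-1}(X)-\dim\calH_{L^2}^*(M,g_{fc},c)$ and $\dim\delta_{fc,c}T_c=\dim IH^*_p(X)-\dim\calH_{L^2}^*(M,g_{fc},c)$, while Theorem \ref{l2hodge} gives $\dim\calH_{L^2}^*(M,g_{fc},c)=\dim{\rm Im}(IH^*_{p-1}(X)\to IH^*_p(X))$.

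It then remains to read off the count from the long exact sequence of Theorem \ref{les}. Writing $A=\dim IH^*_{p-1}(X)$, $B=\dim IH^*_p(X)$, $C=\dim\calH^*(B,\calH^{(f/2)-c}(F))$, $K=\dim{\rm Im}(IH^*_{p-1}(X)\to IH^*_p(X))$, and letting $D$ and $R$ denote the total dimensions of the images of $\partial$ and of $r$ in that sequence, exactness at the three types of terms gives, after summing over all degrees, $A=D+K$, $B=K+R$ and $C=R+D$. Hence
\[
\dim{\rm Im}(BD)=(A-K)+(B-K)=A+B-2K=D+R=C=\tfrac{1}{2}\dim F(P_c,\tfrac{f}{2}).
\]
An isotropic subspace of exactly half the dimension of a nondegenerate finite-dimensional symplectic space is Lagrangian, which completes the proof.
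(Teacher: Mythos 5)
Your proposal is correct and follows essentially the same route as the paper: isotropy via the Green's-formula/boundary-pairing argument for two harmonic forms, plus a half-dimensionality count derived from Theorem \ref{les}. The one step you explicitly defer --- justifying that the $x^{c+\epsilon}L^2$ remainders and cutoff errors contribute nothing --- is exactly what the paper's proof supplies, by writing $\tilde{u}=\chi u + u'$, substituting $P_c(\chi u)=-(d+\delta_{fc,c})u'$, extending the integral to all of $M$, and integrating by parts with the extended $L^2$ pairing; conversely, your long-exact-sequence bookkeeping ($A+B-2K=D+R=C$) is a spelled-out version of the paper's one-line appeal to Theorem \ref{les} for the dimension count.
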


\begin{proof}
First, we can note that Theorem \ref{les} implies that
the dimension of ${\rm Im}(BD) \subset  F(P_c,\frac{f}{2})$ is equal to the dimension of $\calH^{*}(B, \calH^{(f/2)-c}(F))$.
Thus the subspace is half-dimensional.  We need to show that it is self-annihilating under the pairing $iB$.
This follows from Lemma 6.4 from \cite{Me-aps} by the same argument, with a bit of adaptation.  
First suppose that $u,v \in {\rm Im}(BD) \subset  F(P_c,\frac{f}{2})$.  Then there exist extended harmonic forms
$\tilde{u}, \tilde{v}$ such that, 
\[
\tilde{u} = \chi u + u' \qquad \tilde{v}=\chi v + v', \qquad u',v' \in x^{c+\epsilon}L^2\Omega^j(M,g_{fc}),
\]
so $(d+\delta_{fc,c})(\chi u) =-(d+\delta_{fc,c}) u'$ and similarly for $v$.
Then 
\begin{eqnarray*}
iB(u,v) &=& \int_{B\times[0,1)} P_c(\chi u) \wedge *_c \chi v - \chi u \wedge *_c P_c(\chi v)\\
&=& \int_{Y\times[0,1)} (d+\delta_{fc,c})(\chi u) \wedge *_c \chi v - \chi u \wedge *_c (d+\delta_{fc,c})(\chi v)\\
&=& \int_{Y\times[0,1)} -(d+\delta_{fc,c})(u') \wedge *_c \chi v - \chi u \wedge *_c (d+\delta_{fc,c})(\chi v).
\end{eqnarray*}
Because all of the terms have support near $x=1$, we can replace the integral here with 
an integral over all of $M$.  Then in the first term, by the decay of $u'$, we can integrate by parts using the extended 
$L^2$ pairing to get
\begin{eqnarray*}
&=& \int_{M} -(u') \wedge *_c (d+\delta_{fc,c})\chi v - \chi u \wedge *_c (d+\delta_{fc,c})(\chi v) \\
&=& \int_{M} \tilde{u}\wedge *_c (d+\delta_{fc,c})\chi v \\
&=& \int_{M} \tilde{u}\wedge *_c (d+\delta_{fc,c})v' =0.
\end{eqnarray*}
Where in the last step, we have used the vanishing of $v'$ to justify the integration by parts using the 
extended $L^2$ pairing.
\end{proof}

Now consider $({\rm Im}({\rm bd}_{T_c}),0)$ and $(0, {\rm Im}({\rm bd}_{S_c}))$. We know by Lemma \ref{4.4}
that these images are complementary subspaces of ${\rm Im}(BD)$, so their dimensions must add up
to the dimension of $\calH^{*}(B, \calH^{(f/2)-c}(F))$.   By Lemma \ref{lagrangian}, 
we also know that for $u_1 \in {\rm Im}({\rm bd}_{T_c})$ and $v_2 \in {\rm Im}({\rm bd}_{S_c})$,
\[
0 = iB((u_1,0), (0,v_2)) = <u_1,v_2>_Y,
\]
so they are also orthogonal.  This proves the second part of Theorem \ref{1.6}.
Finally, we see from the first part of the theorem that when $c=0$, the Hodge star $*_Y$, is an isomorphism between these images, 
so they are of equal dimension, and the intersection pairing on $Y$ is just $<u, *_Y v>$, which
proves the final part of the theorem.

\end{document}